\documentclass[11pt]{article}
\usepackage{a4wide}
\usepackage[utf8]{inputenc}
\usepackage{multirow}
\usepackage{tikz}
\usetikzlibrary{positioning}
\usepackage[a4paper, margin=2.3cm]{geometry}
\usepackage{amsmath,amssymb,amsthm}
\usepackage{thmtools}
\usepackage{thm-restate}
\usepackage{xcolor}
\usepackage[normalem]{ulem}
\usepackage{parskip}
\usepackage{hyperref}
\usepackage{setspace}
\usepackage{makecell}
\usepackage{graphicx}
\usepackage{enumitem}
\usepackage{comment}
\usepackage{float}

\usepackage{yhmath}

\newtheorem{theorem}{Theorem}[section]
\newtheorem{remark}[theorem]{Remark}
\newtheorem{corollary}[theorem]{Corollary}

\newtheorem{lemma}[theorem]{Lemma}
\newtheorem{question}[theorem]{Question}
\newtheorem{proposition}[theorem]{Proposition}

\newtheorem*{definition*}{Definition}
\newtheorem*{remark*}{Remark}

\hypersetup{
  pdftitle={Subset-Sum Density Realization in Locally Finite Abelian Groups},
  pdfauthor={Norbert Hegyv\'ari, Thang Pham, Boqing Xue}
}

\newcommand{\PP}{\mathcal P}
\newcommand{\F}{\mathbb F}
\newcommand{\N}{\mathbb N}
\newcommand{\Z}{\mathbb Z}
\newcommand{\Span}{\operatorname{span}}

\newcommand{\Deleted}[1]{{\color{red}\begingroup
  \let\DeletedCite\cite
  \def\cite##1{\mbox{\DeletedCite{##1}}}%
  \let\DeletedRef\ref
  \def\ref##1{\mbox{\DeletedRef{##1}}}%
  \sout{#1}\endgroup}}

\title{Subset-Sum Density Realization in Locally Finite Abelian Groups}
\author{Norbert Hegyv\'ari\thanks{E\"otv\"os University and associated member of Alfr\'{e}d R\'{e}nyi Institute, Hungary. Email: hegyvari@renyi.hu} \and Thang Pham \thanks{Institute of Mathematics and Interdisciplinary Sciences at Xidian University, China. \newline
\hspace*{0.5cm} Email: phamanhthang.vnu@gmail.com} \and Boqing Xue\thanks{Institute of Mathematical Sciences, ShanghaiTech University. Email: xuebq@shanghaitech.edu.cn}}
\date{}

\begin{document}

\maketitle

\begin{abstract}
Let $G$ be a countable locally finite abelian group, and let
\[
G_1\leq G_2\leq\cdots,
\qquad
\bigcup_{i\geq1}G_i=G,
\]
be any filtration of $G$ by finite subgroups. For $A\subseteq G$, let $\PP(A)$ denote the set of all finite subset sums of elements of $A$, and let $2G:=\{2g:g\in G\}$. We prove that $|2G|=\infty$ if and only if  for every filtration and every interval $[\alpha,\beta]\subseteq[0,1]$,  there exists $A\subseteq G$ such that the set of limit points of
\[
\left(
\frac{|\PP(A)\cap G_i|}{|G_i|}
\right)_{i\geq1}
\]
is exactly $[\alpha,\beta]$. We also prove a positive-density result that does not require $|2G|=\infty$: if $G$ is infinite and $\PP(A)$ has positive upper density along the given filtration, then there is an infinite set $B\subseteq\PP(A)$ such that $B+B\subseteq\PP(A)$.  Combining this with the realization theorem, we show that, when $|2G|=\infty$, every interval $[\alpha,\beta]$ with $0\leq\alpha\leq\beta\leq1$ and $\beta>0$ can be realized by a set $A$ with this additional property.
\end{abstract}
\tableofcontents
\section{Introduction}\label{sec:introduction}
Let $G$ be an abelian group written additively. For a subset $A$ of $G$, write
\[
\PP(A) := \left\{\sum_{a\in F}a: \, F\subseteq A,\, |F|<\infty\right\}
\]
for its set of finite subset sums. We include the empty sum, so that $0\in\PP(A)$. In questions about asymptotic density in the positive integers, this convention changes the set by at most one point and is therefore immaterial.

For $S\subseteq\mathbb N$, its lower and upper asymptotic densities are
\[
d_n(S):=\frac{|S\cap[1,n]|}{n},\qquad
\underline d(S) :=\liminf_{n\to\infty}d_n(S),
\qquad
\overline d(S) :=\limsup_{n\to\infty}
d_n(S).
\]

There is a simple continuity feature of density on $\mathbb N$, namely,
\[
 |d_{n+1}(S)-d_n(S)|\leq\frac1{n+1}.
\]
Therefore, the set of subsequential limits of $(d_n(S))_{n\geq1}$ is the entire interval $[\underline d(S),\overline d(S)]$. Thus, on the integers, the set of limit points is completely determined by its two endpoints.

Answering a question posed by Zannier, Ruzsa \cite{Ruzsa91}
proved that if $A=\{a_1<a_2<\cdots\}\subseteq\mathbb N$
satisfies $a_{n+1}\leq2a_n$ for all but finitely many $n$,
then $\PP(A)$ has an asymptotic density. Ruzsa also asked
whether, without this growth assumption, the lower and upper
densities of $\PP(A)$ could take arbitrary admissible values.

\begin{question}[Ruzsa]\label{q:ruzsa}
For every pair of real numbers $0\leq\alpha\leq\beta\leq1$, does there exist $A\subseteq\mathbb N$ such that $\underline d\bigl(\PP(A)\bigr)=\alpha$ and $\overline d\bigl(\PP(A)\bigr)=\beta$?
\end{question}

In \cite{Heg95}, the first author answered this question affirmatively:

\begin{theorem}[Hegyv\'ari, 1995]\label{thm:hegyvari}
For every pair $0\leq\alpha\leq\beta\leq1$, there exists $A\subseteq\mathbb N$ such that
\[
\underline d\bigl(\PP(A)\bigr)=\alpha, \qquad \overline d\bigl(\PP(A)\bigr)=\beta.
\]
\end{theorem}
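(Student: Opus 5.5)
The plan is to build $A$ as a union of blocks $A=\bigcup_k A_k$ placed at scales $N_1\ll N_2\ll\cdots$ growing very fast. We control $d_n(\PP(A))$ on each scale window $[N_k, N_{k+1}]$. By the continuity remark $|d_{n+1}(S)-d_n(S)|\le 1/(n+1)$, it suffices to make $d_n(\PP(A))$ oscillate so that its $\liminf$ is $\alpha$ and its $\limsup$ is $\beta$; the intermediate values then come for free. The degenerate cases are handled separately. For $\beta=0$, take a lacunary $A$ such as powers of $3$, whose subset sums have density $0$. For $\alpha=\beta=1$, take $A=\mathbb N$. For the general case we use the building block below.

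\textbf{Building block.} Fix a target $\gamma\in(0,1]$. Start from a finite set $F$ with $\PP(F)\subseteq[0,M]$, and choose a large modulus $m$ and a set of residues $R\subseteq\mathbb Z/m$ with $|R|/m\approx\gamma$. The aim is that, on a long window, $\PP(F\cup A_k)$ is essentially $\{x: x \bmod m\in R\}$. One concrete choice: let $A_k$ contain many copies of numbers $\equiv 0 \pmod m$ (for instance $m, 2m, \dots$ spread over a long range), so that adding them to a base pattern fills in all large elements of the chosen residue classes. The base pattern, coming from a small auxiliary set of size $\ll N_k$, has subset sums exactly hitting the classes of $R$ (up to $o(1)$ density error). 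Taking $m$ with $\gamma m$ close to an integer and $R$ an interval of residues, the base elements $1,2,\dots$ (scaled) produce exactly the residues we want.

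\textbf{Oscillation.} Alternate two kinds of stage.
\begin{enumerate}
\item A \emph{$\beta$-stage} makes the density on $[1,n]$ rise to $\beta$ at some $n=N'_k$.
\item An \emph{$\alpha$-stage} then adds no new small elements for a long stretch. During this stretch $\PP(A)\cap[1,n]$ is frozen up to the largest subset sum $\Sigma_k=\sum_{a\le N'_k}a$. Above $\Sigma_k$ there are no sums until the next elements appear, so $d_n$ decays like $\beta N/n$ toward $0$. We let it drop to exactly $\alpha$ and then start the next block.
\end{enumerate}
The growth of the $N_k$ ensures that earlier blocks contribute negligible density relative to the current one and that new elements do not destroy the pattern already built.

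\textbf{Main obstacle.} The hardest step is making sure subset sums never overshoot. The sums $\Sigma_k$ of all earlier elements shift every pattern, and the sumset is an upward-closed-ish object, so it can only fill in. In particular, in a $\beta$-stage with $\beta<1$ the density must be kept \emph{at most} $\beta$ near the scale while still reaching $\beta$. Two devices address this. First, all elements of $A$ from stage $k$ on are chosen divisible by a modulus $m_k$. Second, the earlier sums occupy only an interval of length $\Sigma_{k-1}\ll m_k$, so each new class pattern is blurred by only a negligible amount. I would first check this bookkeeping, namely that the total density error at scale $n$ is $O(\Sigma_{k-1}/m_k+1/k)$, and then let the errors tend to $0$, which gives $\liminf=\alpha$ and $\limsup=\beta$ exactly.
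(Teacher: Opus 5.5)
\textbf{Verdict: genuine gap.} The paper does not prove this statement; it is quoted from Hegyv\'ari's 1995 paper, so there is no in-paper proof to compare with. Judged on its own, your plan breaks down exactly at the step you call the main obstacle, and it does so whenever $\alpha>0$.

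\emph{The two devices cannot both hold.} Let $n_k$ be the scale at which stage $k$ begins, and $A_{<k}$ the elements chosen before it. If every element from stage $k$ on is divisible by $m_k$, the first new element is at least $m_k$. So no new subset sum appears below $m_k$, and
\[
d_{m_k-1}\bigl(\PP(A)\bigr)\le\frac{\Sigma_{k-1}+1}{m_k-1}.
\]
Your error term $O(\Sigma_{k-1}/m_k)$ is small only when this right-hand side tends to $0$, which forces $\underline d(\PP(A))=0\neq\alpha$. Put differently, when a new stage starts the density is about $\alpha$, so $\Sigma_{k-1}+1\ge|\PP(A_{<k})|\gtrsim\alpha n_k$. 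The old sum set is a positive proportion of the current scale, and its blur can never be negligible.

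If instead the residue pattern is realized from near $n_k$ (with a modulus $m\gg\Sigma_{k-1}\gtrsim\alpha n_k$ and $R$ an interval of residues), the new sums fill $[n_k,\gamma m)$ completely. The density near scale $\gamma m\gg n_k$ is then close to $1$, and the upper density overshoots $\beta<1$. Avoiding overshoot needs a period small compared with $n_k$; negligible blur needs one large compared with $\Sigma_{k-1}$. For $\alpha>0$ there is no room between the two.

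There are also smaller inconsistencies:
\begin{itemize}
\item Multiples of $m_k$ cannot produce the nonzero residues of $R$.
\item Base elements $1,2,\dots$ would change densities at scales already fixed.
\item The density of $\PP(A\cap[1,N'_k])$ on $(N'_k,\Sigma_k]$, where your decay is supposed to start, is never controlled.
\end{itemize}
For $\alpha=0$ the idea can be rescued by taking roughly $\Sigma_{k-1}\ll m_k\ll n_k$ and generating the residues with elements $\equiv 1\pmod{m_k}$ placed above $n_k$. That room does not exist when $\alpha>0$.

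\emph{What is missing} is a way to \emph{raise} the density when the existing sum set $P$ is not negligible at the current scale. You need to add overlapping translates of $P$ so that each step gains a definite amount but no step gains more than $o(1)$; then the first crossing of $\beta$ overshoots by only $o(1)$. The paper's group analogue in Section~\ref{sec:locally-cyclic} does exactly this:
\begin{itemize}
\item The interval contraction of Lemma~\ref{lem:lc-interval-contraction} places disjoint translates $S+\{0,x,\dots,(m-1)x\}$, coming within $O(1/q)$ of $\alpha$.
\item The two-translate extensions of Lemmas~\ref{lem:lc-one-step-expansion} and~\ref{lem:lc-expansion-to-one} then replace each section $T$ by $T\cup(T+x)$.
\item The interval-section structure guarantees that each such step gains a fixed fraction of the gap to $1$, but at most $|S|/|G_{i_1}|\le 1/q$.
\end{itemize}
An integer counterpart of this controlled, incremental filling is what your $\beta$-stage requires, and your proposal does not supply one.
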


Theorem~\ref{thm:hegyvari} is the starting point of this research. We ask what remains of this realization phenomenon when the initial intervals $[1,n]$ are replaced by an arbitrary filtration of a countable locally finite abelian group by finite subgroups.

\subsection{Filtered locally finite groups and the main results}

An abelian group \(G\) is \emph{locally finite} if every finitely generated subgroup is finite.  Let \(G\) be countable and locally finite. By a \emph{filtration} of \(G\) we mean a chain
\begin{equation}\label{eq:def-filtration}
 G_1\leq G_2\leq\cdots,  \qquad  \bigcup_{i\geq1}G_i=G,
\end{equation}
of finite subgroups.

Unlike the initial intervals in $\mathbb N$, a countable locally finite group may admit many different filtrations by finite subgroups, and the resulting density may depend on the choice. We therefore fix $G_\bullet=(G_i)_{i\geq1}$ in advance and regard the pair $(G,G_\bullet)$ as the object under consideration. Each index $i$ is called a \emph{level} of the filtration.

For $S\subseteq G$ we define
\[
d_i^{G_\bullet}(S):=\frac{|S\cap G_i|}{|G_i|}, \qquad \underline d_{G_\bullet}(S):=\liminf_{i\to\infty}d_i^{G_\bullet}(S),  \qquad \overline d_{G_\bullet}(S):=\limsup_{i\to\infty}d_i^{G_\bullet}(S).
\]

The passage from $\mathbb N$ to an arbitrary  filtration changes the realization problem in an essential way. The indices $[G_{i+1}:G_i]$ may be arbitrarily large, so there is no analogue of the estimate $|d_{n+1}(S)-d_n(S)|\leq 1/(n+1)$. Even if the lower and upper limits of a density sequence are known, its intermediate values need not occur as subsequential limits.

We therefore keep track of the entire set of subsequential limits of the subset-sum densities:
\begin{equation}\label{eq:def-limit-set}
\mathcal L_{G_\bullet}(A):= \left\{\gamma\in[0,1]:\,
 \lim\limits_{\ell\rightarrow \infty} d_{i_\ell}^{G_\bullet}(\PP(A))= \gamma
 \text{ for some }i_1<i_2<\cdots\right\}.
\end{equation}

For a fixed filtration $G_\bullet$, we ask whether every interval $[\alpha,\beta]\subseteq[0,1]$ can occur as $\mathcal L_{G_\bullet}(A)$ for some $A\subseteq G$. The structural question is to determine which algebraic feature of $G$ makes this possible for every filtration.

Our answer is a sharp dichotomy governed by the set of doubles
\[
 2G:=\{2g:g\in G\}.
\]
Although the individual densities depend on the chosen filtration, the possibility of realizing every interval along every filtration is determined solely by whether $2G$ is infinite. We first state the flexible half of the dichotomy.

\begin{theorem}\label{thm:main}
Let \(G\) be a countable locally finite abelian group with \(|2G|=\infty\). Let $G_\bullet=(G_i)_{i\geq 1}$ be an arbitrary given filtration by finite subgroups:
\[
 G_1\leq G_2\leq\cdots,
 \qquad
 \bigcup_{i\geq1}G_i=G.
\]
Then, for every
\(0\leq\alpha\leq\beta\leq1\), there exists \(A\subseteq G\) such that
\[
 \mathcal L_{G_\bullet}(A)=[\alpha,\beta].
\]
\end{theorem}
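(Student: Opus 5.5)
The plan is a stage-by-stage construction along a sparse subsequence of levels $n_1<n_2<\cdots$. First fix a sequence of targets $(t_k)_{k\ge1}$ in $[\alpha,\beta]$ that sweeps back and forth between $\alpha$ and $\beta$ with step sizes $|t_{k+1}-t_k|\to0$. The set of limit points of such a sequence is exactly $[\alpha,\beta]$. I will build finite sets $A_1,A_2,\dots$ and put $A=\bigcup_k A_k$, so as to guarantee two properties:
\begin{enumerate}[label=(\roman*)]
\item $d^{G_\bullet}_{n_k}(\PP(A))=t_k+o(1)$;
\item for every intermediate level $n_{k}\le i\le n_{k+1}$, the density $d^{G_\bullet}_i(\PP(A))$ lies within $o(1)$ of the segment between $t_k$ and $t_{k+1}$.
\end{enumerate}
Property (ii) is what replaces the integer estimate $|d_{n+1}-d_n|\le 1/(n+1)$, which fails for filtrations. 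Properties (i) and (ii) together give $\mathcal L_{G_\bullet}(A)=[\alpha,\beta]$. The degenerate cases $\alpha=\beta$ and $\beta=0$ are handled by the same scheme with constant targets; for $\beta=0$ one only needs $A$ so sparse that $\PP(A)$ has density $0$.

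The construction is arranged so that the contribution of new elements is controlled modulo the old level. All elements of $A_{k+1}$ are taken in $G_{n_{k+1}}\setminus G_{n_k}$, and $n_{k+1}$ is chosen so large that $|\PP(A_1\cup\dots\cup A_k)|$ is negligible compared with $|G_{n_{k+1}}|/|G_{n_k}|$. Then $\PP(A)\cap G_i$ is, up to negligible error, a union of translates of the finite set $P_k:=\PP(A_1\cup\cdots\cup A_k)$, indexed by the subset sums of the new elements. The images of the new elements in the quotient $G_{n_{k+1}}/G_{n_k}$ decide which cosets of $G_{n_k}$ are hit. To make this clean, I would also make $P_k$ essentially fill a prescribed fraction $t_k$ of $G_{n_k}$. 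A convenient way is to take $P_k\supseteq H_k$ for a subgroup $H_k$ of small index in $G_{n_k}$, which is possible by putting a copy of $H_k$ itself into $A$, since $\PP(H)=H$. The density computation then reduces to counting cosets of $H_k$ in the quotient groups $G_i/H_k$.

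The hypothesis $|2G|=\infty$ enters in the key lemma, which supplies the fine-grained density control. If $2G$ is finite, then up to a finite part $G$ behaves like an elementary abelian $2$-group. There subset sums of new independent elements are spans, so coset counts are powers of $2$ and densities are quantized. When $2G$ is infinite, for any finite $H\le G$ one can find $g$ whose order modulo $H$ is large while $2g\notin H$; the idea is to find elements whose order modulo the given subgroup is at least $3$ and, by combining many of them, arbitrarily large. Along such an element, sets like $\{g,2g,\dots\}$ (or carefully chosen pieces of a cyclic progression modulo $H$) have subset sums that cover an initial segment $\{0,g,\dots,mg\}+H$ of any prescribed length $m$. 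This yields coset densities approximating any $t\in[0,1]$ to within $1/\mathrm{ord}$.

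The main obstacle will be property (ii), the intermediate levels $G_i$ with $n_k<i<n_{k+1}$, which may sit awkwardly relative to the chosen $g$. I would first try to choose the new element adapted to the filtration: for instance, take $g$ whose multiples climb through the intermediate levels gradually. Failing that, spread the move from $t_k$ to $t_{k+1}$ over many small substeps, each realized by one new block of elements. Then each intermediate density is a convex-type combination of nearby targets, and the deviation is bounded by the substep size, which tends to $0$.
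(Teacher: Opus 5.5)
There is a genuine gap, and it is in your key lemma. The hypothesis $|2G|=\infty$ does not give elements of large order modulo a finite subgroup. For $G=\bigoplus_{n\geq1}\Z/4\Z$ or $G=\bigoplus_{n\geq1}\Z/3\Z$ one has $|2G|=\infty$, yet every element has order at most $4$ (resp.\ $3$), and no combination of elements has larger order. Your progressions $\{0,g,\ldots,(m-1)g\}+H$ therefore have length at most $4$. They cut $\langle g\rangle+H$ only in proportions $m/4$ or $m/3$, so they give no fine-grained density control. The missing idea is a genuinely multidimensional gadget whose density factor can be brought arbitrarily close to $1$ and then moved in small steps. Large order is not the relevant feature.

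The paper gets this after passing to the quotients $\bigoplus\Z/4\Z$ and $\bigoplus\F_p$, in three pieces. Co-singleton blocks satisfy $\PP(B)=U\setminus\{v\}$ with $U\cong\mathcal R^m$, giving a factor $1-|U|^{-1}$. Auxiliary coordinates $D(\mathfrak q,v)$ each divide the missing proportion by $\gamma_{\mathcal R}$. These factors multiply exactly over disjoint coordinate supports. Progressions of unbounded length appear only for the locally cyclic quotients $\Z(p^\infty)$ and $\bigoplus_n\Z/p_n\Z$.

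Your property (ii) is also left unresolved, and the difficulty is not where the multiples of $g$ sit. An intermediate level can grow in directions that contain no multiple of $g$. Take $G=\Z(3^\infty)\oplus\bigoplus_{n\geq1}\Z/2\Z$, and suppose the fixed filtration adds $r$ new $\Z/2\Z$ factors to $G_{n_k}$ before its $3$-part grows. If your new elements lie along $g\in\Z(3^\infty)$ and are separated, then $\PP(A)\cap G_i=P_k$ at those levels. The density there is $t_k2^{-r}$, which creates spurious limit points below $\alpha$. Splitting into substeps does not help, because at every intermediate $G_i$ the new subset sums must meet a controlled proportion of all cosets of $G_{n_k}$ in $G_i$, whatever direction the filtration grows in.

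The paper handles this in stages. First, quotient transfer puts the whole kernel into $A$, so $\PP(A)=\pi^{-1}(\PP(C))$ exactly and all irrelevant directions disappear. In the locally cyclic quotients, subgroups form a chain, so intermediate levels are $\langle (q/d)x\rangle$ and can be counted exactly. In the coordinate quotients, a basis adapted to the entire filtration is used. Every unused coordinate goes fully into $A$ and so contributes factor $1$. Blocks are placed so that $v\in G_i\Longleftrightarrow U\subseteq G_i$, which makes each block contribute either $1$ or its full factor at every level.

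Finally, you need each new set $E$ to be separated, $\PP(E)\cap G_{n_k}=\{0\}$. Otherwise later stages change densities you have already fixed; for progressions this means keeping the segment length below the order of $g$ modulo $G_{n_k}$.
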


In Theorem~\ref{thm:main}, the filtration is fixed in advance and is subject to no regularity assumption. The set \(A\) is constructed afterwards and may depend on this filtration. Moreover, the conclusion specifies the entire cluster set of the density sequence, rather than only its lower and upper limits.

The opposite behavior is already visible in an elementary abelian $2$-group. If $2G=\{0\}$, then $G$ is an $\F_2$-vector space and
\[
 \PP(A)=\Span_{\F_2}(A).
\]
Hence $\PP(A)\cap G_i$ is a subspace of $G_i$, and its relative size is $2^{-c_i}$, where $c_i$ is its codimension. These codimensions are nondecreasing, so the density sequence converges to either $0$ or $2^{-m}$ for some nonnegative integer $m$. The following theorem shows that a quantitative form of this rigidity persists whenever $2G$ is finite.

\begin{theorem}\label{thm:finite-2G-obstruction}
Let \(G\) be a countable locally finite abelian group for which \(|2G|<\infty\), and let \(G_\bullet\) be any filtration by finite subgroups.
For every \(A\subseteq G\), there exists \(m\in\Z_{\geq 0}\cup\{\infty\}\) such that
\[
 \frac{2^{-m}}{|2G|}
 \leq \underline d_{G_\bullet}\bigl(\PP(A)\bigr)
 \leq \overline d_{G_\bullet}\bigl(\PP(A)\bigr)
 \leq 2^{-m},
\]
where \(2^{-\infty}:=0\).  In particular,
\[
 \overline d_{G_\bullet}\bigl(\PP(A)\bigr)>\frac12
 \quad\Longrightarrow\quad
 \underline d_{G_\bullet}\bigl(\PP(A)\bigr)
 \geq\frac1{|2G|}.
\]
\end{theorem}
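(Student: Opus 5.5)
The plan is to pass to the quotient by the finite subgroup $H:=2G$. There the subset-sum set becomes a subspace over $\F_2$, exactly as in the elementary abelian $2$-group case discussed before the statement. Then I lift back, losing at most a factor $|H|$.

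\textbf{Step 1 (the quotient).} Since $G$ is abelian, $H=2G$ is a subgroup, and it is finite by hypothesis. The quotient $V:=G/H$ satisfies $2V=0$, so $V$ is an $\F_2$-vector space. Let $\pi:G\to V$ be the projection and put $W:=\Span_{\F_2}\pi(A)\le V$. I claim
\[
\pi(\PP(A))=W .
\]
The inclusion $\subseteq$ is clear, since $\pi$ is a homomorphism. For $\supseteq$, every $v\in W$ is a sum of finitely many \emph{distinct} vectors $\pi(a_1),\dots,\pi(a_k)$ with $a_j\in A$. Repetitions can be cancelled in characteristic $2$, and the $a_j$ are distinct because their images are. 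Hence $a_1+\cdots+a_k\in\PP(A)$ maps to $v$. Consequently $\PP(A)\subseteq\pi^{-1}(W)$, and $\PP(A)$ meets every coset of $H$ contained in the subgroup $\pi^{-1}(W)$.

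\textbf{Step 2 (the levelwise bounds).} Since $H$ is finite and $\bigcup_i G_i=G$, we have $H\le G_i$ for all $i\ge i_0$. Fix such an $i$ and set $K_i:=\pi^{-1}(W)\cap G_i$, a subgroup of $G_i$ containing $H$. Its index is
\[
[G_i:K_i]=[\pi(G_i):W\cap\pi(G_i)]=2^{c_i},\qquad c_i:=\dim_{\F_2}\bigl(\pi(G_i)+W\bigr)/W .
\]
The upper bound is immediate: $\PP(A)\cap G_i\subseteq K_i$ gives $d_i(\PP(A))\le 2^{-c_i}$.

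For the lower bound, let $x+H$ be any coset of $H$ with $x\in K_i$. By Step 1 it contains some $p\in\PP(A)$. Because $H\le G_i$ and $x\in G_i$, the whole coset lies in $G_i$, so $p\in\PP(A)\cap G_i$. This is the one point where finiteness of $2G$ is really used. The set $\PP(A)\cap G_i$ therefore meets each of the $|K_i|/|H|$ cosets of $H$ in $K_i$, which gives
\[
d_i(\PP(A))\ge \frac{|K_i|}{|H|\,|G_i|}=\frac{2^{-c_i}}{|2G|}.
\]

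\textbf{Step 3 (monotonicity and the limit).} The subspaces $(\pi(G_i)+W)/W$ of $V/W$ increase with $i$, so the codimensions $c_i$ are nondecreasing. Let $m:=\lim_i c_i\in\Z_{\ge0}\cup\{\infty\}$.

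If $m<\infty$, then $c_i=m$ for all large $i$, and taking $\liminf$ and $\limsup$ of the bounds in Step 2 gives the displayed chain of inequalities. If $m=\infty$, the upper bound forces $d_i(\PP(A))\to0$, which is the claim with $2^{-\infty}=0$.

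For the final implication, suppose $\overline d_{G_\bullet}(\PP(A))>1/2$. Then $2^{-m}>1/2$, so $m=0$, and the lower bound reads $\underline d_{G_\bullet}(\PP(A))\ge 1/|2G|$.

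The only step needing care is the surjectivity $\pi(\PP(A))=W$ together with the containment of cosets of $H$ inside $G_i$. Once both are in place, the argument is bookkeeping.
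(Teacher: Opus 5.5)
Your proof is correct and is essentially the paper's argument. Both quotient by the finite subgroup $2G$, identify $\pi(\PP(A))$ with the $\F_2$-span of $\pi(A)$, and sandwich $|\PP(A)\cap G_i|$ between one and $|2G|$ points per coset of $2G$ lying over the span, with nondecreasing codimensions $c_i$. The only cosmetic difference is that you define $m:=\lim_i c_i$, whereas the paper defines $m:=\dim_{\F_2}(\overline G/L_A)$ and then shows $c_i$ reaches it; the two definitions agree.
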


The conclusion is stronger than the exclusion of a single pair of densities. If $2G$ is finite and the lower density of $\PP(A)$ is zero, then Theorem~\ref{thm:finite-2G-obstruction} forces its upper density to be zero as well. Thus no pair $(0,\beta)$ with $\beta>0$ can occur. When $2G=\{0\}$, the two bounds in the theorem coincide, recovering convergence to $0$ or to a reciprocal power of two. Combining the flexible and rigid cases gives the following exact characterization.

\begin{corollary}\label{cor:sharp-criterion}
For a countable locally finite abelian group \(G\), the following statements are equivalent.
\begin{enumerate}
\item[(\romannumeral1)] \(2G\) is infinite.

\item[(\romannumeral2)] For every filtration \(G_\bullet\) and every \(0\leq\alpha\leq\beta\leq1\), there exists \(A\subseteq G\)
such that
\[
 \underline d_{G_\bullet}\bigl(\PP(A)\bigr)=\alpha,
 \qquad
 \overline d_{G_\bullet}\bigl(\PP(A)\bigr)=\beta.
\]

\item[(\romannumeral3)] For every filtration \(G_\bullet\) and every \(0\leq\alpha\leq\beta\leq1\), there exists \(A\subseteq G\)
such that
\[
 \mathcal L_{G_\bullet}(A)=[\alpha,\beta].
\]
\end{enumerate}
\end{corollary}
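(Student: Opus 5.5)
We close the cycle of implications (i)$\Rightarrow$(iii)$\Rightarrow$(ii)$\Rightarrow$(i), using Theorem~\ref{thm:main} and Theorem~\ref{thm:finite-2G-obstruction} as black boxes.

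\emph{(i)$\Rightarrow$(iii).} This is exactly Theorem~\ref{thm:main}. If $2G$ is infinite, then for every filtration $G_\bullet$ and every $0\leq\alpha\leq\beta\leq1$ it produces $A\subseteq G$ with $\mathcal L_{G_\bullet}(A)=[\alpha,\beta]$.

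\emph{(iii)$\Rightarrow$(ii).} Fix $G_\bullet$ and $\alpha\leq\beta$, and take $A$ from (iii). The density sequence $d_i^{G_\bullet}(\PP(A))$ lies in the compact interval $[0,1]$. Hence its $\liminf$ and $\limsup$ are attained along subsequences, so both belong to $\mathcal L_{G_\bullet}(A)$. They are also the minimum and maximum of that set. Since $\mathcal L_{G_\bullet}(A)=[\alpha,\beta]$, we get $\underline d_{G_\bullet}(\PP(A))=\alpha$ and $\overline d_{G_\bullet}(\PP(A))=\beta$.

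\emph{(ii)$\Rightarrow$(i).} We argue by contraposition. Suppose $|2G|<\infty$. Choose any filtration $G_\bullet$; one exists because $G$ is countable and locally finite, so we may enumerate $G=\{g_1,g_2,\dots\}$ and set $G_i=\langle g_1,\dots,g_i\rangle$. Take $\alpha=0$ and $\beta=1$, and suppose some $A$ satisfies $\underline d_{G_\bullet}(\PP(A))=0$ and $\overline d_{G_\bullet}(\PP(A))=1$.
\begin{itemize}
\item Theorem~\ref{thm:finite-2G-obstruction} gives $m\in\Z_{\geq0}\cup\{\infty\}$ with
\[
\frac{2^{-m}}{|2G|}\leq \underline d_{G_\bullet}(\PP(A))=0 .
\]
\item Since $|2G|<\infty$, this forces $2^{-m}=0$, that is, $m=\infty$.
\item Then the upper bound in the same theorem gives $\overline d_{G_\bullet}(\PP(A))\leq 2^{-\infty}=0$, contradicting $\overline d_{G_\bullet}(\PP(A))=1$.
\end{itemize}
Alternatively, the ``in particular'' clause of Theorem~\ref{thm:finite-2G-obstruction} yields the contradiction directly: upper density $1>\tfrac12$ forces lower density at least $1/|2G|>0$. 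So (ii) fails, which proves the contrapositive.

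None of these steps is a real obstacle once the two theorems are available; all the depth lies in Theorems~\ref{thm:main} and~\ref{thm:finite-2G-obstruction}. The only points needing care are that $\liminf$ and $\limsup$ belong to the cluster set (compactness of $[0,1]$) and that at least one filtration exists, so the negation of ``for every filtration'' is witnessed.
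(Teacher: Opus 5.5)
Your proposal is correct and follows the paper's proof essentially verbatim: Theorem~\ref{thm:main} gives (i)$\Rightarrow$(iii), (iii)$\Rightarrow$(ii) is immediate, and Theorem~\ref{thm:finite-2G-obstruction} rules out a pair $(0,\beta)$ with $\beta>0$ when $2G$ is finite. The only difference is that the paper excludes the pair $(0,3/4)$ while you exclude $(0,1)$, which works equally well; your remarks on existence of a filtration and on $\liminf/\limsup$ lying in $\mathcal L_{G_\bullet}(A)$ just make the paper's ``immediate'' steps explicit.
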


Thus, our realization result is not merely an analogue of the Hegyv\'ari construction, but a sharp classification theorem: it identifies \(|2G|=\infty\) as exactly the algebraic condition under which, for every given filtration $G_\bullet$, every interval in \([0,1]\) occurs as the full cluster set \(\mathcal L_{G_\bullet}(A)\) for some \(A\subseteq G\).

The positive-upper-density regime of Theorem~\ref{thm:main} carries an additional structural consequence.  For \(B\subseteq G\), write $B+B:=\{b+b':b,b'\in B\}$. We prove in Section~\ref{sec:positive-density-BB} that every finite-subset-sum set \(\PP(A)\) of positive upper density along \(G_\bullet\) contains \(B+B\) for some infinite set \(B\subseteq\PP(A)\).  Applying this principle to the realizing sets supplied by Theorem~\ref{thm:main} gives the following strengthening.

\begin{theorem}
\label{thm:realization-BB}
Let $G$ be a countable locally finite abelian group with $|2G|=\infty$, and let $G_\bullet$ be a fixed filtration.  For every $0\leq\alpha\leq\beta\leq1$ with $\beta>0$, there exist $A\subseteq G$ and an infinite set $B\subseteq\PP(A)$ such that
\[
 \mathcal L_{G_\bullet}(A)=[\alpha,\beta]
 \qquad\text{and}\qquad
 B+B\subseteq\PP(A).
\]
\end{theorem}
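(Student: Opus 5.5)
The plan is to combine Theorem~\ref{thm:main} with the positive-density principle announced for Section~\ref{sec:positive-density-BB}. That principle says: if $G$ is infinite and $\overline d_{G_\bullet}(\PP(A))>0$, then there is an infinite $B\subseteq\PP(A)$ with $B+B\subseteq\PP(A)$. Since the abstract states that this result does not need $|2G|=\infty$, I would use it as a black box, proved independently of the realization theorem.

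\textbf{Step 1.} Fix $0\leq\alpha\leq\beta\leq1$ with $\beta>0$. By Theorem~\ref{thm:main}, which applies because $|2G|=\infty$, choose $A\subseteq G$ with $\mathcal L_{G_\bullet}(A)=[\alpha,\beta]$.

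\textbf{Step 2.} Check that $\overline d_{G_\bullet}(\PP(A))=\beta$. The density sequence takes values in the compact interval $[0,1]$, so its limsup is a subsequential limit. The limsup is also the largest element of $\mathcal L_{G_\bullet}(A)$. Since $\mathcal L_{G_\bullet}(A)=[\alpha,\beta]$, this gives $\overline d_{G_\bullet}(\PP(A))=\beta>0$.

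\textbf{Step 3.} Note that $G$ is infinite, since $2G\subseteq G$ and $2G$ is infinite.

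\textbf{Step 4.} Apply the positive-density principle to $A$. It yields an infinite $B\subseteq\PP(A)$ with $B+B\subseteq\PP(A)$. The set $A$ is unchanged, so $\mathcal L_{G_\bullet}(A)=[\alpha,\beta]$ still holds, which proves the theorem.

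The deduction itself is routine. The real obstacle is the positive-density principle, so I sketch how I would prove it. Let $P=\PP(A)$, and note that $P$ is closed under adding any element of $A$ that is not already used in a sum.
\begin{enumerate}
\item Pass to levels $i_\ell$ along which $|P\cap G_{i_\ell}|\geq\delta|G_{i_\ell}|$ for a fixed $\delta>0$.
\item Build $B=\{b_1,b_2,\dots\}$ inductively, taking each $b_k=\sum_{a\in F_k}a$ for finite sets $F_k\subseteq A$ that are pairwise disjoint.
\item Then for $j\neq k$ the sum $b_j+b_k=\sum_{a\in F_j\cup F_k}a$ lies in $P$ automatically.
\end{enumerate}

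The difficulty is the diagonal sums $2b_k$, because $F_k\cup F_k$ is not a disjoint union. To handle them I would use positive density. A set of density at least $\delta$ in a finite group $H$ meets a translate of itself often: $\sum_x|P\cap(P-x)|\geq\delta^2|H|^2$. So one can find a pair $p,p+x\in P$ and arrange, by a pigeonhole argument over disjoint blocks of $A$, that $2b_k\in P$.

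If $A$ is finite, the principle is trivial: then $P$ is finite, so it has density $0$ unless $G$ is finite, which Step~3 excludes. Making the diagonal sums $2b_k$ land in $P$ is where I expect the real work to be. In the deduction above, however, it is simply quoted.
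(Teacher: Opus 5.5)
Your deduction is exactly the paper's: Theorem~\ref{thm:main} supplies $A$ with $\mathcal L_{G_\bullet}(A)=[\alpha,\beta]$, so $\overline d_{G_\bullet}(\PP(A))=\max\mathcal L_{G_\bullet}(A)=\beta>0$, and Theorem~\ref{thm:positive-density-BB}, which needs only that $G$ be infinite, then gives the required $B$. Your aside on proving that principle would not work as sketched, because overlap counting of $\PP(A)$ against its translates does not produce the diagonal sums $2b_k$. The paper instead uses three ingredients: the subgroup $R$ of elements representable outside every finite subset of $A$ (with $\PP(A)+R=\PP(A)$ absorbing the error in $2b_k$), disjoint-support certificates for pairs $x,2x$ modulo $R$, and a density-decay lemma showing that a subset-sum set containing no nonzero $x$ together with $2x$ has density zero.
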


Taking $\alpha=0$ in Theorem~\ref{thm:realization-BB}, we obtain that, for every $0<\beta\leq1$, there exist $A\subseteq G$ and an infinite set
$B\subseteq\PP(A)$ such that $\mathcal L_{G_\bullet}(A)=[0,\beta]$ and $B+B\subseteq\PP(A)$. Thus $\PP(A)$ may have lower density zero while containing $B+B$ for some infinite set $B$.

\subsection{Relation to previous work}

Three lines of previous work are directly relevant. The first concerns the same type of realization problem. For subset sums on $\mathbb N$, this is precisely the Ruzsa--Hegyv\'ari problem \cite{Heg95, Ruzsa91} recalled above. For ordinary sumsets, Bienvenu and Hennecart \cite{BienHen20} solved the analogous problem of simultaneously realizing the asymptotic densities of $A$ and $A+A$.

The second line concerns the same class of groups and the same way of measuring density. Bienvenu and Hennecart \cite{BienHen22} considered an abelian group $G=\bigcup_{i\geq1}G_i$ filtered by finite subgroups and defined density through the ratios $|S\cap G_i|/|G_i|$, exactly as above. In this setting they proved a Kneser-type theorem for ordinary sumsets. Theorem~\ref{thm:main} brings the first two lines together: it places the Ruzsa--Hegyv\'ari realization problem for $\PP(A)$ in countable locally finite abelian groups, with the filtration fixed in advance, and proves that the possibility of realizing every interval in $[0,1]$ as $\mathcal L_{G_\bullet}(A)$ for every such filtration is equivalent to $|2G|=\infty$.

A third line of work concerns infinite additive configurations inside sets of positive density.  Moreira, Richter and Robertson \cite[Theorem~1.2]{MRR19Sumset} proved that every subset of $\mathbb N$ of positive upper Banach density contains $B+C$ for two infinite sets $B,C$. Extending this
result, Kra, Moreira, Richter and Robertson \cite[Theorem~1.1]{KMRR24Infinite} proved that, for every fixed $k$, every subset of $\mathbb N$ of positive upper Banach density contains
\[
 B_1+\cdots+B_k
\]
for some infinite sets $B_1,\ldots,B_k\subseteq\mathbb N$. The same authors \cite[Theorem~1.2]{KMRR24BBt} subsequently proved that every subset of $\mathbb N$ of positive upper Banach density contains a translate of
\[
 \{b+b':b,b'\in B,\ b\neq b'\}
\]
for some infinite set $B$. Their density finite sums theorem extends this conclusion: for each fixed $k$, one obtains a
translate of all sums of between one and $k$ distinct elements of an infinite set \cite[Theorem~1.1]{KMRR26DensityFS}. Charamaras and Mountakis
obtained a group extension of the shifted restricted-sumset result. They \cite[Corollary~1.13]{CM25} proved that, if $G$ is a countably infinite abelian group satisfying $[G:2G]<\infty$, then every set $E\subseteq G$ of positive upper Banach density contains a translate of the restricted sumset of some infinite set $B\subseteq E$. For infinite $G$, the condition $[G:2G]<\infty$ implies $|2G|=\infty$, but the converse need not hold. Their result concerns a shifted restricted sumset in an arbitrary positive-density set and is therefore complementary to Theorem~\ref{thm:positive-density-BB}, which obtains an unshifted unrestricted sumset from the special structure of $\PP(A)$ without any hypothesis on $2G$. For unrestricted sumsets of the form $B+B$, which include the diagonal sums $2b$, nontrivial density thresholds arise already in the integers (see \cite{KR25BB}), and recent work studies sharp thresholds in countable abelian groups under suitable hypotheses on the doubling map and the F{\o}lner sequence (see \cite{CKMR25,Kousek26}).  Most relevant to our setting, Charamaras, Kousek, Mountakis and Radi\'c \cite[Corollary~5.6]{CKMR25} show that, for every countable abelian group $G$ with $2G$ infinite, there exists a set $E\subseteq G$ of upper Banach density one such that
\[
 t+B+B\nsubseteq E
\]
for every $t\in G$ and every infinite $B\subseteq G$. Thus upper Banach density one alone does not force an unrestricted $B+B$ configuration. Theorem~\ref{thm:positive-density-BB}, by contrast, exploits the special structure of $\PP(A)$ as a set of finite subset sums. For a broader account of infinite-sumset problems, their obstructions, and their extensions beyond the integers, see the survey \cite[Sections~2 and~5]{KMRR25Survey}.

The hypothesis $|2G|=\infty$ is a common structural condition behind several apparently different additive phenomena. Fern\'andez-Bret\'on, Sarmiento Rosales, and Vera proved that, for an abelian group $G$, it is equivalent to the following finite Ramsey property: for every pair of positive integers $n,r$, every coloring $c:G\to\{1,\ldots,r\}$ admits a set $X\subseteq G$ with $|X|=n$ such that the full sumset $X+X=\{x+y:x,y\in X\}$ is monochromatic \cite[Theorem~1]{FSRV24}. For locally finite groups, the same condition is also equivalent to the assertion that every finite coloring of $G$ contains a proper monochromatic three-term arithmetic progression, that is, pairwise distinct $x,y,z\in G$ satisfying $x+z=2y$. Indeed, if $2G$ is finite, the finite coloring $g\mapsto 2g$ rules out such a progression. In the opposite direction, the assertion follows from van der Waerden's theorem \cite{vdW27} together with Lev's bound for true-progression-free subsets of finite abelian groups \cite[Theorem~1]{Lev04}. These statements are not direct consequences of one another; rather, they are different manifestations of the same structural condition. Theorem~\ref{thm:main} shows that complete realization of subset-sum densities along  each fixed filtration is governed by this condition as well.

\subsection{Outline of the proofs}

The proof of Theorem~\ref{thm:main} begins with an exact transfer principle.  If \(\pi:G\twoheadrightarrow Q\) is surjective and \(Q_i:=\pi(G_i)\), then every \(C\subseteq Q\) admits a lift \(A\subseteq G\) satisfying $\PP(A)=\pi^{-1}\bigl(\PP(C)\bigr)$, and hence
\[
 \frac{|\PP(A)\cap G_i|}{|G_i|}
 =
 \frac{|\PP(C)\cap Q_i|}{|Q_i|}
 \qquad(i\geq1).
\]
The equality is termwise, so the whole cluster set is preserved. The lift is concrete: one takes the entire kernel of \(\pi\), together with one representative of each nonzero element of \(C\).

The structural reduction then shows that every countable locally finite abelian group with \(|2G|=\infty\) has a quotient isomorphic to at least one of
\[
 \bigoplus_{n\geq1}(\Z/4\Z),
 \qquad
 \bigoplus_{n\geq1}(\Z/p\Z),
 \qquad
 \bigoplus_{n\geq1}(\Z/p_n\Z),
 \qquad
 \Z(\tilde{p}^\infty),
\]
where \(p\) denotes some odd prime, the \(p_n\) are pairwise distinct odd primes, and $\tilde{p}$ is some prime, with $\Z(\tilde{p}^\infty)$ being the Pr\"{u}fer \(\tilde{p}\)-group. These are quotient alternatives rather than a classification of \(G\), and they need not be mutually exclusive. They divide naturally into two constructive regimes: the first two have an infinite coordinate structure, whereas the last two are locally cyclic.

We first indicate the locally cyclic construction. Here every finite filtration subgroup is cyclic.  Suppose \(G_i<G_j\), put \(q=[G_j:G_i]\), and let \(x\) generate \(G_j\). For each integer $11\leq m<q$, a finite integer subset-sum construction produces a set \(E\subseteq G_j\setminus G_i\) satisfying
\[
 \PP(C\cup E)
 =
 \PP(C)+\{0,x,2x,\ldots,(m-1)x\}
\]
for any $C\subseteq G_i$. If the density of \(\PP(C)\) in \(G_i\) is \(u\), then the density of $\PP(C\cup E)$ in \(G_j\) is exactly $um/q$. At every intermediate filtration level the density remains between \(um/q\) and \(u\). Taking \(q\) large and choosing \(m\) appropriately therefore lowers the density to within \(O(q^{-1})\) of any given smaller value.

The reverse operation uses a second finite subset-sum construction. In a sufficiently large cyclic extension, it produces either one translate or two adjacent translates of the preceding set in each new coset, with two translates in all but a bounded number of cosets.  The sets under construction have interval sections in the old cosets, and this property is preserved.  A quantitative lower bound shows that each extension gains a fixed positive proportion of the remaining gap to density \(1\). Iteration therefore crosses every target below \(1\).  A complementary upper bound makes the excess above the target at the first crossing arbitrarily small.

The locally cyclic argument alternates these two operations.  In cycle \(k\), an interval contraction brings the density within \(O(q_k^{-1})\) of \(\alpha\), and finitely many two-translate extensions stop at the first crossing of a target tending to \(\beta\), where \(q_k\to\infty\). The same first-crossing argument applied to any \(\gamma\in(\alpha,\beta)\) produces a density within \(O(q_k^{-1})\) of \(\gamma\).  Each extension set is separated from the preceding subgroup, so later cycles create no new subset sums at levels already treated.

The coordinate quotients require a different construction. We first choose a basis adapted simultaneously to the given filtration. Over \(\F_p\), this is an adapted vector-space basis. Over \(\Z/4\Z\), the chosen basis elements must additionally record that the order-two subgroup of a cyclic coordinate may enter the filtration before the full coordinate.  On a finite coordinate summand \(U\cong \mathcal R^m\), where \(\mathcal R=\F_p\) or \(\mathcal R=\Z/4\Z\), we construct a finite set \(B\subseteq U\) and an element \(v\in U\) such that
\[
 \PP(B)=U\setminus\{v\}.
\]
Thus the local density factor is \(1-|U|^{-1}\).  The support is placed so that \(v\in G_i\) exactly when \(U\subseteq G_i\), which makes this factor compatible with every level of the fixed filtration. Auxiliary coordinates attached to the same missing element reduce its missing proportion geometrically.  At a level where \(r\) auxiliary coordinates contribute, the factor becomes
\[
 1-p^{-(m+r)}
 \quad\text{if }\mathcal R=\F_p,
 \qquad
 1-4^{-m}2^{-r}
 \quad\text{if }\mathcal R=\Z/4\Z.
\]
Since distinct constructions use disjoint direct summands, their local density factors multiply exactly.  At each fixed filtration level only finitely many factors occur, and basis elements placed after that level cannot alter its density.

The product formula allows us to alternate these two operations. In cycle $k$, co-singleton blocks lower the density close to $\alpha$, and auxiliary coordinates then raise it to the first crossing of a target tending to $\beta$. At each increasing step, we choose a block with largest current missing proportion, which makes the excess above the target tend to zero. The small changes during the decreasing steps realize every value in $[\alpha,\beta]$, while stability ensures that later cycles do not alter densities at earlier filtration levels.

The additional \(B+B\) conclusion in Theorem~\ref{thm:realization-BB} is supplied by the general positive-density principle stated as Theorem~\ref{thm:positive-density-BB}.  To prove this principle, we first observe that positive upper density allows us to reduce to the case \(\langle A\rangle=G\).  Put \(S:=\PP(A)\), and let \(R\) be the subgroup of all elements that admit a finite-subset-sum representation using \(A\setminus D\) for every finite \(D\subseteq A\).  If \(R\) is infinite, then \(R\subseteq S\) and \(S+R=S\), so \(B:=R\) has the required property.  If \(R\) is finite, we quotient by \(R\) and encode simultaneous representations of \(x\) and \(2x\) by a countable multihypergraph on \(A\).  An infinite matching yields infinitely many support-disjoint certificates and hence an infinite set \(B\subseteq S\) with \(B+B\subseteq S\).  If no infinite matching exists, a finite transversal \(U\subseteq A\) leaves a quotient subset-sum set \(T\), generated by \(A\setminus U\), such that
$$
 x,2x\in T\quad\Longrightarrow\quad x=0.
$$
The density-decay lemma forces \(T\) to have density zero.  Since the image of \(S\) is the finite union
$$
 \pi(S)=\pi\bigl(\PP(U)\bigr)+T
$$
of translates of \(T\), this contradicts the assumed positive upper density. The density-decay lemma is proved by decomposing the indexed quotient family into disjoint minimal zero-sum circuits and residual indices, and then deriving density bounds by counting suitable pairwise disjoint sets inside the finite subgroups.

\subsection{Organization of the paper}

The next section establishes the finite integer subset-sum constructions used in both later arguments. Section~\ref{sec:structural-reduction} proves the exact quotient-transfer principle and the four-type quotient alternatives.  Section~\ref{sec:locally-cyclic} treats the two locally cyclic quotients through interval contractions and two-translate extensions.  Section~\ref{sec:coordinate-cases} treats the coordinate quotients through co-singleton constructions, auxiliary coordinates, and an exact product formula. Section~\ref{sec:proofs} lifts these constructions to the original group, proves the finite-\(2G\) obstruction, and deduces Corollary~\ref{cor:sharp-criterion}. Finally, Section~\ref{sec:positive-density-BB} proves Theorem~\ref{thm:positive-density-BB} and Theorem~\ref{thm:realization-BB}.

\section{Basic lemmas for subset sums}

The set of natural numbers is denoted by $\N=\{1,2,3,\ldots\}$. The cardinality of a finite set $E$ is denoted by $|E|$ or $\#E$.

\begin{lemma} \label{lem_basic1}
Let \(0<a_1<\cdots<a_s\) be integers satisfying
\[
a_1=1, \qquad a_j\leq1+\sum_{h<j}a_h\quad(2\leq j\leq s).
\]
Then
\[
\PP\big(\{a_1,a_2,\ldots,a_s\}\big) =
\{0,1,\ldots,a_1+\cdots+a_s\}.
\]
\end{lemma}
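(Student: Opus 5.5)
We argue by induction on $s$, writing $S_j:=a_1+\cdots+a_j$ and aiming to show $\PP(\{a_1,\ldots,a_j\})=\{0,1,\ldots,S_j\}$ for every $1\leq j\leq s$. One inclusion is trivial for every $j$: each subset sum is a nonnegative integer bounded by the total sum $S_j$, so $\PP(\{a_1,\ldots,a_j\})\subseteq\{0,\ldots,S_j\}$. Hence only the reverse inclusion needs an argument.

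For the base case $j=1$ we have $\PP(\{a_1\})=\{0,a_1\}=\{0,1\}$ because $a_1=1$. For the inductive step, assume $\PP(\{a_1,\ldots,a_{j-1}\})=\{0,\ldots,S_{j-1}\}$. Since
\[
\PP(\{a_1,\ldots,a_j\})=\PP(\{a_1,\ldots,a_{j-1}\})\cup\bigl(a_j+\PP(\{a_1,\ldots,a_{j-1}\})\bigr),
\]
the induction hypothesis turns this into
\[
\{0,\ldots,S_{j-1}\}\cup\{a_j,\ldots,a_j+S_{j-1}\}.
\]
The hypothesis $a_j\leq 1+S_{j-1}$ means the second interval begins no later than one step past the end of the first, so the union has no gap and equals $\{0,\ldots,a_j+S_{j-1}\}=\{0,\ldots,S_j\}$. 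Taking $j=s$ gives the lemma.

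There is no real obstacle here. The only point to verify is the no-gap condition, which is exactly the assumed inequality. Strict monotonicity of the $a_j$ ensures they are distinct, so the set $\{a_1,\ldots,a_s\}$ really has $s$ elements and the decomposition above is legitimate; it plays no other role.
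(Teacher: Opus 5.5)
Your proof is correct and follows the paper's approach: induction on $j$, writing the subset sums as the union of $\{0,\ldots,S_{j-1}\}$ and its translate by $a_j$, with the hypothesis $a_j\leq 1+S_{j-1}$ ensuring the two intervals overlap or are adjacent. You also spell out the base case and note that the $a_j$ are distinct, both of which the paper leaves implicit.
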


\begin{proof}
This follows immediately by induction, because the set $\{0,1,\ldots,a_1+\ldots+a_{j-1}\}$ and its translate by \(a_j\) $(2\leq j\leq s)$ overlap or are adjacent.
\end{proof}

\begin{lemma} \label{lem_subsetsum_complete_interval}
For any $t\in \N$ with $t\geq 10$, there is a subset $D\subseteq \N$ such that $\PP(D)=\{0,1,\ldots,t\}$.
\end{lemma}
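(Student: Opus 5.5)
The plan is to build $D$ as an increasing sequence satisfying the hypothesis of Lemma~\ref{lem_basic1} whose total sum is exactly $t$. That lemma then gives $\PP(D)=\{0,1,\ldots,t\}$ at once. The natural backbone is an initial run of consecutive integers, followed by one correcting final element.

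\emph{Step 1 (the backbone).} For $s\geq1$ put $T_s:=1+2+\cdots+s=s(s+1)/2$. The set $\{1,2,\ldots,s\}$ satisfies the condition of Lemma~\ref{lem_basic1}, since $j\leq 1+T_{j-1}$ for every $j\geq2$. Hence $\PP(\{1,\ldots,s\})=\{0,1,\ldots,T_s\}$.

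\emph{Step 2 (choice of $s$ and the last element).} Given $t\geq10$, let $s$ be the largest integer with $T_{s+1}\leq t$. Since $T_4=10$, we have $s\geq3$. By maximality, $T_{s+1}\leq t<T_{s+2}$.
\begin{itemize}
\item If $t=T_{s+1}$, take $D=\{1,\ldots,s+1\}$, which is covered by Step~1.
\item Otherwise put $r:=t-T_s$. Then $s+1<r<T_{s+2}-T_s=2s+3$, so $r\leq 2s+2$.
\end{itemize}
Take $D=\{1,\ldots,s,r\}$. Its elements are increasing and distinct, because $r>s$, and its total is $T_s+r=t$. The only remaining condition of Lemma~\ref{lem_basic1} is $r\leq 1+T_s$. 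This follows from $2s+2\leq 1+s(s+1)/2$, which holds for all $s\geq4$.

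\emph{Step 3 (the exceptional small case).} The one place the inequality can fail is $s=3$, that is, $10\leq t\leq14$. There $r=t-6\in\{5,6,7,8\}$, and the bound $r\leq 7$ fails only for $t=14$. For this value I use the explicit set $D=\{1,2,4,7\}$. It satisfies $2\leq2$, $4\leq4$ and $7\leq8$, and its total is $14$, so Lemma~\ref{lem_basic1} applies again.

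This boundary bookkeeping is the only real obstacle. The hypothesis $t\geq10$ is needed: values such as $t=2$ or $t=4$ cannot be realized at all. So I will check directly that $t=10$ through $t=14$ are all covered, either by Step~2 or by the special set above.
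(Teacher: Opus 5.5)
Your proof is correct and is essentially the paper's argument. With $k=s+1$ and $t=k(k+1)/2+r'$, your set $\{1,\ldots,s,r\}$ is exactly the paper's $\{1,\ldots,k-1,k+r'\}$, and the check via Lemma~\ref{lem_basic1} is the same: it works for $k\geq5$, and for $k=4$ it fails only at $t=14$, which the paper also handles with $\{1,2,4,7\}$. Your separate case $t=T_{s+1}$ is harmless but unnecessary, since the general formula with $r=s+1$ gives the same set.
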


\begin{proof}
For $t=14$, take $D=\{1,2,4,7\}$. In all other cases, let \(k\) be the unique natural number such that $t=k(k+1)/2+r$ for some $0\leq r\leq k$. Put
\[
D=\{1,2,\ldots,k-1,k+r\}.
\]
To show $\PP(D)=\{0,1,\ldots, t\}$ by Lemma \ref{lem_basic1}, it is sufficient to verify $k+r\leq1+k(k-1)/2$. One checks that it holds for \(k\geq 5\), and also for \(k=4, r\leq 3\). So the conclusion holds when $t\geq 15$ or $10\leq t\leq 13$. This completes the proof.
\end{proof}

\begin{corollary} \label{cor_missing_one}
Let $p$ be an odd prime. Then there exists a set $D\subseteq \F_p^*$ and an element $a\in \F_p^*$ such that $\PP(D)=\F_p\setminus\{a\}$.
\end{corollary}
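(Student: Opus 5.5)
The plan is to take $D$ to be a set of distinct small positive integers viewed as residues mod $p$, chosen so that its integer subset sums form a full interval $\{0,1,\ldots,p-2\}$. Reducing mod $p$, this interval is exactly $\F_p\setminus\{p-1\}$, so the missing element is $a=-1$. We must also check that no additional residue is hit. If the integer subset sums are exactly $\{0,\ldots,p-2\}$, then every subset sum lies in $[0,p-2]$, so no sum wraps around mod $p$. Hence $\PP(D)$ in $\F_p$ is precisely the image of $\{0,\ldots,p-2\}$, which avoids $p-1\equiv -1$. For this we need the elements of $D$ to be distinct nonzero residues, which holds automatically if they are distinct integers in $[1,p-1]$.

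For $p\geq 13$, we have $t:=p-2\geq 11\geq 10$, so Lemma~\ref{lem_subsetsum_complete_interval} supplies $D\subseteq\N$ with $\PP(D)=\{0,1,\ldots,p-2\}$. Each element of $D$ is at most the total sum $p-2<p$, and the elements are distinct positive integers. So $D$ embeds injectively into $\F_p^*$, and the argument of the first paragraph gives $\PP(D)=\F_p\setminus\{-1\}$ with $a=-1\in\F_p^*$.

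The main obstacle is the small primes $p\in\{3,5,7,11\}$, where $t=p-2<10$ and the lemma does not apply. Here we construct $D$ by hand using Lemma~\ref{lem_basic1}, needing sets with integer subset sums exactly $\{0,\ldots,t\}$ for $t=1,3,5,9$:
\begin{itemize}
\item $t=1$: take $D=\{1\}$.
\item $t=3$: take $D=\{1,2\}$.
\item $t=5$: take $D=\{1,4\}$. Its sums are $0,1,4,5$, which fails, and $\{1,2\}$ only reaches $3$, so we must allow $a\neq -1$ here. For $p=7$, take $D=\{1,2,3\}$. Its sums are $0,\ldots,6$, which covers everything, so that also fails. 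Instead take $D=\{1,2,4\}$, whose sums are $0,\ldots,7\equiv 0$, again everything. So we try a target $a$ other than $-1$: $D=\{1,3\}$ gives $\{0,1,3,4\}$, which is too small. For $D=\{1,2,5\}$ the sums are $0,1,2,3,5,6,7,8\equiv 0,1$, which gives $\{0,1,2,3,5,6\}$, missing exactly $4$. This works.
\item $t=9$ ($p=11$): take $D=\{1,2,3,4\}$, whose sums are exactly $0,\ldots,10$. That is all of $\F_{11}$, so it fails. Take $D=\{1,2,3,5\}$ instead: its sums are $\{0,\ldots,11\}\setminus\{\}$ up to $11\equiv 0$, again everything. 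Instead take $D=\{1,2,6\}$, whose sums are $0,1,2,3,6,7,8,9$, which is not enough. The approach is then a direct search for $D$ with sums covering exactly ten residues. For example, $D=\{1,2,3,3\}$ is not allowed since the elements must be distinct, so we use a Lemma~\ref{lem_basic1}-style set summing to $9$: $\{1,2,6\}$ fails, while $\{1,3,5\}$ gives $0,1,3,4,5,6,8,9$. The plan is to finish by a finite check, for instance with $\{1,2,3,4,\ldots\}$ variants allowing wraparound, and I expect that some set such as $\{1,2,3,7\}$ works: its sums are $0$–$6$ and $7$–$13$, which mod $11$ give everything. So this case needs care, and I would settle it by a short exhaustive search, recording an explicit $D$ and $a$.
\end{itemize}
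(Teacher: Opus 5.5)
Your treatment of $p\ge 13$ is exactly the paper's argument: Lemma~\ref{lem_subsetsum_complete_interval} with $t=p-2$, and missing element $p-1$. Your cases $p=3,5$ are also correct. For $p=7$, your final choice $D=\{1,2,5\}$ with missing element $4$ differs from the paper's $\{2,3,4\}$ with missing element $1$, but it is valid.

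\textbf{The gap is $p=11$.} The case is left unproved. Every candidate you test fails; your $\{1,2,3,7\}$ in fact hits all of $\F_{11}$, as your own computation shows. Deferring to ``a short exhaustive search'' does not establish existence until a witness is actually recorded.

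This case genuinely needs wraparound, so the $a=-1$ route cannot be rescued. A set of at most three elements has at most $8$ subset sums. Four distinct positive integers already sum to at least $10$. Hence no set of distinct positive integers has subset sums exactly $\{0,1,\ldots,9\}$. The same counting rules out $\{0,\ldots,5\}$ for $p=7$, which is why you had to abandon $a=-1$ there too, and why Lemma~\ref{lem_subsetsum_complete_interval} needs $t\ge 10$.

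\textbf{The fix is one line.} The paper takes $D=\{1,3,5,6\}$, whose subset sums modulo $11$ give exactly $\F_{11}\setminus\{2\}$.

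Alternatively, your own $p=7$ construction generalizes. Suppose $\PP(D_0)=\{0,1,\ldots,p-4\}$ in $\F_p$ with $D_0\subseteq\{1,\ldots,p-4\}$. Then $-2=p-2\notin D_0$, and adjoining $-2$ gives $\PP(D_0\cup\{-2\})=\PP(D_0)\cup(\PP(D_0)-2)$. This union misses only $\{p-3,p-2,p-1\}\cap\{p-5,p-4,p-3\}=\{p-3\}$.
\begin{itemize}
\item With $D_0=\{1,2\}$ and $p=7$, this is exactly your set $\{1,2,5\}$.
\item With $D_0=\{1,2,4\}$ and $p=11$, it gives $D=\{1,2,4,9\}\subseteq\F_{11}^*$ and $\PP(D)=\F_{11}\setminus\{8\}$.
\end{itemize}

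Once such a witness is written in, your proof is complete and has the same structure as the paper's.
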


\begin{proof}
For \(p=3,5,7,11\), take $D$ to be
\[
\{1\},\quad
 \{1,2\},\quad
 \{2,3,4\},\quad
 \{1,3,5,6\},
\]
with the missing element \(2,4,1,2\),  respectively.
For \(p\geq 13\), applying Lemma \ref{lem_subsetsum_complete_interval} with $t=p-2$ gives a set $D\subseteq \{1,2,\ldots,p-2\}$ with $\PP(D)=\{0,1,\ldots, p-2\}$. The missing element is $p-1$.
\end{proof}

\begin{lemma} \label{lem_missing_two}
Let $T\geq 32$ be an integer. There is a set $W\subseteq \N$ with $|W|\geq 2$ such that
\[
\PP(W)=\{0,1,\ldots,T\}\setminus \{1,T-1\}.
\]
\end{lemma}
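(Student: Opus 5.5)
The construction rests on a symmetry of subset sums. If $W$ is a finite set of positive integers with $\sum_{w\in W}w=T$, then $x\mapsto T-x$ maps $\PP(W)$ onto itself, by passing to the complementary subset. Hence if $\min W\geq2$, then $1\notin\PP(W)$ and so also $T-1\notin\PP(W)$. The plan is to take $W$ with smallest element $2$ and total sum $T$, and to check that every other value in $[0,T]$ is attained. I will build $W$ greedily from the block $\{2,3,4,5\}$, adding one element at a time and keeping the shape $\{0,\ldots,s\}\setminus\{1,s-1\}$.

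\emph{Base case.} A direct check gives $\PP(\{2,3,4,5\})=\{0,1,\ldots,14\}\setminus\{1,13\}$.

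\emph{Extension step.} Suppose $\PP(W)=S:=\{0,\ldots,s\}\setminus\{1,s-1\}$, and let $b\notin W$ be an integer with $2\leq b\leq s-3$. I claim
\[
\PP(W\cup\{b\})=S\cup(S+b)=\{0,\ldots,s+b\}\setminus\{1,s+b-1\}.
\]
\begin{enumerate}
\item The value $1$ is still missing, since $b\geq2$.
\item The value $s-1$ is recovered as $b+(s-1-b)$. This works because $s-1-b\in S$: indeed $0\leq s-1-b$, and $s-1-b\neq1$ since $b\neq s-2$.
\item The value $b+1$, which is absent from $S+b$, lies in $S$, since $2\leq b+1\leq s-2$.
\item There is no gap between the two pieces, since $S+b$ starts at $b\leq s$.
\item On the top end, $S+b$ covers everything up to $s+b$ except $s+b-1$.
\end{enumerate}

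\emph{Iteration.} For $n\geq5$ put $s_n:=2+3+\cdots+n=n(n+1)/2-1$. Adding $b=n+1$ is allowed, because $n+1\leq s_n-3$ for $n\geq5$. So by induction
\[
\PP(\{2,\ldots,n\})=\{0,\ldots,s_n\}\setminus\{1,s_n-1\}\qquad(n\geq5).
\]

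\emph{Hitting $T$ exactly.} Given $T\geq32$, let $n$ be maximal with $s_n+(n+1)\leq T$. Since $s_6+7=27\leq32$, we have $n\geq6$. Put $b:=T-s_n$. By maximality, $T<s_{n+1}+(n+2)=s_n+2n+3$, so
\[
n+1\leq b\leq 2n+2.
\]
For $n\geq6$ we have $2n+2\leq s_n-3$, and this also ensures $b\neq s_n-2$. Moreover $b>n$, so $b$ is distinct from the elements of $\{2,\ldots,n\}$. Applying the extension step once to $W:=\{2,\ldots,n\}\cup\{b\}$ gives
\[
\PP(W)=\{0,1,\ldots,T\}\setminus\{1,T-1\},
\]
and clearly $|W|\geq2$.

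The only delicate point is this final step, where the last element must simultaneously be new (greater than $n$), at most $s_n-3$, and different from $s_n-2$. The choice of maximal $n$ and the hypothesis $T\geq32$, which forces $n\geq6$, are exactly what make all three conditions hold; the remaining verifications are routine.
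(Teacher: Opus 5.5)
Your proof is correct and is essentially the paper's argument. You use the same extension observation (adjoining $b\notin W$ with $2\le b\le s-3$ preserves the shape $\{0,\ldots,s\}\setminus\{1,s-1\}$), iterate it over $\{2,\ldots,n\}$, and finish with the single element $b=T-s_n$; this gives exactly the paper's set $W=\{2,\ldots,k-1,k+r\}$ with $k=n+1$. The differences are cosmetic: you start from the base block $\{2,3,4,5\}$ rather than $\{2,3,4\}$, and you choose $n$ by maximality rather than writing $T=\tau_k+r$.
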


\begin{proof}
Set $\tau_j:=2+3+\ldots+j=(j-1)(j+2)/2$ for $j\in \N$. Let $k$ be the unique natural number satisfying $T=\tau_k+r$, with $0\leq r\leq k$. Since $T \geq 32$ and $\tau_7=27$, one has $k\geq 7$. We take $W=\{2,3,\ldots,k-1,k+r\}$.

We use the following elementary observation. If a set \(B\subseteq \N\) satisfies $\PP(B)=\{0,1,\ldots,V\}\setminus \{1,V-1\}$ for some $V\geq 9$, then
\[
\PP(B\cup\{a\}) =\{0,1,\ldots,V+a\}\setminus \{1,V+a-1\}
\]
whenever \(a\notin B\) and \(2\leq a\leq V-3\). Indeed,
\[
\PP(B\cup\{a\})
=\PP(B)\cup\bigl(a+\PP(B)\bigr),
\]
and the assumption \(a\leq V-3\) ensures that the missing element $V-1$ in $\PP(B)$ occurs in $a+\PP(B)$.

Consider the situation $V=\tau_{j-1}$ and $B=\{2,3,\ldots,j-1\}$ for $j\geq 5$. When $j=5$, one checks that $\PP(\{2,3,4\})
=\{0,1,\ldots,9\}\setminus \{1,8\}$. Since $j\leq \tau_{j-1}-3$ when $j\geq 5$, the observation may be applied successively to \(j=5,6,\ldots,k-1\) taking $a=j$. Thus,
\[
\PP\big(\{2,3,\ldots, k-1\}\big)=\{0,1,\ldots,\tau_{k-1}\}\setminus\{1,\tau_{k-1}-1\}.
\]

Finally, since \(r\leq k\) and \(k\geq7\), one has $k+r\leq2k\leq\tau_{k-1}-3$. Applying the observation once more with $V=\tau_{k-1}$, $B=\{2,3,\ldots,k-1\}$ and $a=k+r$, and noting that $\tau_{k-1}+a=\tau_k+r=T$, gives
\[
\PP(W)=\PP\big(\{2,3,\ldots, k-1\}\cup \{k+r\}\big)=\{0,\ldots, T\}\setminus \{1,T-1\}.
\]
This completes the proof.
\end{proof}

\begin{lemma} \label{lem_basic2}
Let $p$ be an odd prime. Let $1\leq s\leq p-1$. Then
\[
\left\{\sum\limits_{t\in S} t:\, S\subseteq \F_p,\, |S|=s\right\} =\F_p.
\]
\end{lemma}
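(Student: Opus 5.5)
Let $\Sigma_s$ denote the set on the left-hand side of Lemma~\ref{lem_basic2}, the set of sums of $s$-element subsets of $\F_p$. The plan is to exploit two symmetries of $\Sigma_s$: translation invariance and complementation. The inclusion $\Sigma_s\subseteq\F_p$ is trivial, so only the reverse inclusion needs proof. Fix $1\le s\le p-1$ and one $s$-element subset $S_0\subseteq\F_p$, with sum $\sigma_0$.

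First, for any $c\in\F_p$ the translate $S_0+c$ is again an $s$-element subset of $\F_p$. Its sum is
\[
 \sum_{t\in S_0+c}t=\sigma_0+sc .
\]
Second, since $1\le s\le p-1$ and $p$ is prime, $s$ is invertible in $\F_p$. Hence $c\mapsto \sigma_0+sc$ is a bijection of $\F_p$. Letting $c$ range over $\F_p$ therefore produces every element of $\F_p$ as the sum of some $s$-element subset, which gives $\Sigma_s=\F_p$.

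The only step that needs care is the invertibility of $s$ modulo $p$. This is exactly where the hypothesis $s\le p-1$ enters: for $s=p$ the only subset is $\F_p$ itself, whose sum is $0$, and the claim fails. The oddness of $p$ is not needed for this argument. It is presumably assumed only for consistency with the later applications, such as Corollary~\ref{cor_missing_one}. No other obstacle is expected, and no earlier lemma is required.
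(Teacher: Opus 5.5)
Your proof is correct and is essentially the paper's argument: translate a fixed $s$-element set $S_0$ by $c$, note that the sum becomes $\sigma_0+sc$, and use the invertibility of $s$ modulo $p$ to reach every element of $\F_p$. The complementation symmetry you announce at the start is never used, and you are right that oddness of $p$ plays no role here.
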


\begin{proof}
Fix any set $S_0\subseteq \F_p$ of size $s$, and denote $u_0:=\sum_{t\in S_0}t$. Now for any $u\in \F_p$, let $c_u$ be the element in $\F_p$ such that $sc_u=u-u_0$. Then the set $S_u=S_0+c_u$ also has size $s$, and
\[
\sum_{x\in S_u}x = \sum_{t\in S_0}(t+c_u)
= u_0+sc_u =u.
\]
This completes the proof.
\end{proof}

\section{Quotient reduction and structural alternatives}\label{sec:structural-reduction}

In this section, we transfer the realization problem to suitable quotients without changing the density sequence.

\begin{lemma}\label{lem:quotient-transfer}
Let $\pi:G\twoheadrightarrow Q$ be a surjective homomorphism. Give $Q$ the image filtration $Q_\bullet=(Q_i)$ by $Q_i=\pi(G_i)$ $(i\geq 1)$. For
every $C\subseteq Q$, there is a set $A\subseteq G$ such that $\PP(A)=\pi^{-1}\bigl(\PP(C)\bigr)$ and
\[
\frac{|\PP(A)\cap G_i|}{|G_i|} = \frac{|\PP(C)\cap Q_i|}{|Q_i|},\qquad (i\geq 1).
\]
Consequently, $\mathcal L_{G_\bullet}(A)=\mathcal L_{Q_\bullet}(C)$.
\end{lemma}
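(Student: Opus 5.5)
The plan is to use the concrete lift described in the outline. Put $K:=\ker\pi$. For each nonzero $c\in C$, fix one preimage $a_c\in G$ with $\pi(a_c)=c$. Define
\[
A:=K\cup\{a_c:\ c\in C\setminus\{0\}\}.
\]
Elements $a_c$ with different $c$ are distinct, and none of them lies in $K$ because $c\neq0$. So $A$ is the disjoint union of $K$ and a set $A'$ that $\pi$ maps bijectively onto $C\setminus\{0\}$.

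The first step is to show $\PP(K)=K$. The inclusion $\PP(K)\subseteq K$ holds because $K$ is a subgroup. Conversely, every $k\in K$ is the one-element subset sum $\{k\}$ when $k\neq0$, and the empty sum when $k=0$. Next, any finite $F\subseteq A$ splits as $F=F_K\sqcup F'$ with $F_K\subseteq K$ and $F'\subseteq A'$. Hence
\[
\PP(A)=\PP(K)+\PP(A')=K+\PP(A').
\]
Since $\pi$ is injective on $A'$, subsets $F'\subseteq A'$ correspond bijectively to finite subsets of $C\setminus\{0\}$, and $\pi(\sum F')=\sum\pi(F')$. Therefore $\pi(\PP(A'))=\PP(C\setminus\{0\})=\PP(C)$, where the last equality holds because including $0$ in a subset never changes its sum. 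It follows that
\[
K+\PP(A')=\pi^{-1}\bigl(\pi(\PP(A'))\bigr)=\pi^{-1}\bigl(\PP(C)\bigr),
\]
which is the first claim.

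For the density identity, restrict $\pi$ to $G_i$. This gives a surjection $\pi_i:G_i\to Q_i$ whose kernel is $K\cap G_i$, so every fibre of $\pi_i$ has exactly $|K\cap G_i|=|G_i|/|Q_i|$ elements. We have
\[
\PP(A)\cap G_i=\pi^{-1}(\PP(C))\cap G_i=\pi_i^{-1}\bigl(\PP(C)\cap Q_i\bigr).
\]
Here we use that every point of $\PP(C)\cap Q_i$ has a preimage in $G_i$, since $Q_i=\pi(G_i)$. Counting fibres then gives
\[
|\PP(A)\cap G_i|=|\PP(C)\cap Q_i|\cdot\frac{|G_i|}{|Q_i|}.
\]
Dividing by $|G_i|$ yields the termwise equality of the density sequences. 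The two sequences are identical, so they have the same subsequential limits, and thus $\mathcal L_{G_\bullet}(A)=\mathcal L_{Q_\bullet}(C)$. We also note that $Q_\bullet$ is a genuine filtration of $Q$: each $Q_i$ is a finite subgroup, the chain is increasing, and its union is $\pi(G)=Q$.

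The only delicate point is the identity $\PP(A)=\pi^{-1}(\PP(C))$. It needs every element of $K$ to be available as a subset sum, which is why all of $K$ is put into $A$ rather than a generating set of $K$. It also needs the lifts of distinct elements of $C$ to be distinct and to avoid $K$, so that subsets of $A'$ correspond exactly to subsets of $C\setminus\{0\}$ and $\pi$ creates no extra sums.
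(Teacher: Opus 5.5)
Your proof is correct and follows the paper's proof: the same lift $A=\ker\pi\cup\{\text{one preimage of each nonzero } c\in C\}$, and the same fibre count for $\pi|_{G_i}:G_i\to Q_i$. The only cosmetic difference is how you get $\PP(A)=\pi^{-1}(\PP(C))$: you use $\PP(A)=K+\PP(A')$ together with $K+X=\pi^{-1}(\pi(X))$, whereas the paper checks the two inclusions element by element.
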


\begin{proof}
Write $N:=\ker\pi$. For every nonzero $c\in C$, choose one lift $\widetilde c\in G$ satisfying $\pi(\widetilde c)=c$. Take
\[
 A=N\cup\{\widetilde c:\, c\in C\setminus\{0\}\}.
\]
We claim that
\begin{equation} \label{eq_order_pi_PP}
\PP(A)=\pi^{-1}(\PP(C)).
\end{equation}
Indeed, if $x\in \PP(A)$ is expressed as $x= \sum_{b\in N'}b+\sum_{c\in C'}\widetilde{c}$ for some finite sets $N'\subseteq N$ and $C'\subseteq C\setminus \{0\}$, then
\[
\pi(x) = 0+\sum_{c\in C'}\pi(\widetilde{c})=\sum_{c\in C'}c \in \PP(C).
\]
Conversely, assume \(\pi(x)\in\PP(C)\). Then there is a finite set \(C'\subseteq C\) such
that $\pi(x)=\sum_{c\in C'}c$. The element \(0\) does not affect this sum. So we may assume without loss of generality that \(C'\subseteq C\setminus\{0\}\). Now take $y:=\sum_{c\in C'}\widetilde c$. Then $\pi(y)=\pi(x)$, and hence $n:=x-y\in N$. If $n\neq 0$, then
\[
x=n+\sum_{c\in C'}\widetilde c \in \PP(A),
\]
noting that the summands in this expression are distinct. If \(n=0\), then simply $x=\sum_{c\in C'}\widetilde c\in\PP(A)$. Now \eqref{eq_order_pi_PP} follows.

Since $\pi(G_i)=Q_i$, the map $\pi_i:=\pi|_{G_i}:\, G_i\rightarrow Q_i$ is surjective and $\ker\pi_i=N\cap G_i$. By \eqref{eq_order_pi_PP},
\[
\PP(A)\cap G_i = \pi^{-1}\bigl(\PP(C)\bigr)\cap G_i = \pi_i^{-1}\bigl(\PP(C)\cap Q_i\bigr).
\]
Moreover, every fibre of \(\pi_i\) is a coset of \(N\cap G_i\). We conclude that
\[
\frac{|\PP(A)\cap G_i|}{|G_i|} = \frac{|\PP(C)\cap Q_i|\,|N\cap G_i|}{|Q_i|\,|N\cap G_i|} =\frac{|\PP(C)\cap Q_i|}{|Q_i|}.
\]
Since the two density sequences agree term by term, their sets of
subsequential limits agree as well.
\end{proof}

Next, we show four quotient alternatives. For a prime \(p\), the Pr\"{u}fer \(p\)-group, written additively, is
\[
 \Z(p^\infty):=\Z[1/p]/\Z
 =\{a/p^r+\Z:a\in\Z,\ r\geq0\}.
\]

\begin{lemma} \label{lem:primary-quotients}
Let $P$ be a countably infinite abelian $p$-group.

(1) If $P/pP$ is infinite, then $P$ has a quotient isomorphic to
$\bigoplus_{n=1}^\infty (\Z/p\Z)$.

(2) If $P/pP$ is finite, then $P$ has a quotient isomorphic to
$\Z(p^\infty)$.
\end{lemma}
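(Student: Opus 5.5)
For part (1), I will pass to the quotient $P/pP$. Every element of $P/pP$ is killed by $p$, so $P/pP$ is an $\F_p$-vector space. It is countable, being a quotient of the countable group $P$, and infinite by hypothesis. A countably infinite vector space has a countably infinite basis, so $P/pP\cong\bigoplus_{n\ge1}\Z/p\Z$. Composing $P\twoheadrightarrow P/pP$ with this isomorphism gives the required quotient. This part is routine.

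For part (2), the plan uses divisible groups. First I would show that $P$ is not reduced modulo a finite piece, that is, that $P$ has a nonzero divisible quotient. Write $P/pP\cong(\Z/p\Z)^k$ and choose $x_1,\dots,x_k\in P$ whose images span $P/pP$. Let $F=\langle x_1,\dots,x_k\rangle$; it is finite because $P$ is a torsion group. Then $P=F+pP$.

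Put $D:=P/F$. Then $pD=D$, so $D$ is $p$-divisible. Since $D$ is a $p$-group, it is divisible by every prime $\ell\neq p$ automatically (multiplication by $\ell$ is bijective on each finite cyclic $p$-subgroup). Hence $D$ is divisible, and it is infinite because $P$ is infinite and $F$ is finite. By the structure theorem for divisible abelian groups, $D\cong\bigoplus_{j\in J}\Z(p^\infty)$ with $J\neq\emptyset$. Projecting onto one summand gives a surjection $D\twoheadrightarrow\Z(p^\infty)$, and composing with $P\twoheadrightarrow D$ finishes the proof.

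The only delicate point, which I expect to be the main obstacle, is invoking the classification of divisible groups. To stay elementary, I would instead produce the surjection onto $\Z(p^\infty)$ directly. Pick a nonzero $y\in D[p]$, which exists because $D$ is a nonzero $p$-group. Use divisibility to build a chain $y_1=y$ with $py_{n+1}=y_n$. Then $E:=\langle y_n:n\ge1\rangle\cong\Z(p^\infty)$. Since $E$ is divisible, it is injective, so it is a direct summand of $D$ (Baer's criterion, or a Zorn argument on extending $\mathrm{id}_E$). Projecting onto $E$ gives the surjection.
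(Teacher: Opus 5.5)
Your proof is correct. Part (1) is the same as the paper's argument. For part (2) you take a more elementary route than the paper.

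The paper uses the theory of basic subgroups from Fuchs. It takes a basic subgroup $B$, uses $B/pB\cong P/pP$ to conclude that $B$ is finite, cites the divisibility of $P/B$, and then applies the classification of divisible groups to write $P/B$ as a direct sum of Pr\"{u}fer groups. You instead let $F$ be the finite subgroup generated by lifts of a basis of $P/pP$. The identity $P=F+pP$ gives $p(P/F)=P/F$ directly. Since $P/F$ is a $p$-group, multiplication by any prime $\ell\neq p$ is already bijective, so $P/F$ is divisible.

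Your $F$ plays the role of the paper's basic subgroup. You use only its finiteness and the identity $P=F+pP$, never purity or a cyclic decomposition, so the basic-subgroup machinery is bypassed entirely. In your last step, the explicit copy $E=\langle y_n\rangle\cong\Z(p^\infty)$ inside $D$, split off by injectivity, replaces the full structure theorem for divisible groups. It needs only the fact that divisible groups are injective, which the paper itself cites in the proof of Proposition~\ref{prop:quotient-alternative}.

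The paper's version is shorter on the page given its references. Yours is essentially self-contained and shows that the only real input is ``finitely generated torsion implies finite.''
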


\begin{proof}
If $P/pP$ is infinite, then it is a countably infinite-dimensional vector space over $\F_p$. Hence $P/pP\cong \bigoplus_{n=1}^\infty(\Z/p\Z)$ which proves the first assertion.

Suppose now that \(P/pP\) is finite. By \cite[Theorem~32.3, p.~137]{FuchsI}, one can choose a basic subgroup \(B\) of \(P\). By \cite[\S33, (\romannumeral1) on p.~139]{FuchsI}, there is some index set \(J\) and positive integers \(e_j\) such that $B\cong\bigoplus_{j\in J}\Z/p^{e_j}\Z$. Moreover, the inclusion \(B\hookrightarrow P\) induces an isomorphism $B/pB\cong P/pP$ by \cite[\S34(B), p.~144]{FuchsI}. On the other hand,
\[
B/pB \cong  \bigoplus_{j\in J} \left(\bigl(\Z/p^{e_j}\Z\bigr)/p\bigl(\Z/p^{e_j}\Z\bigr)\right) \cong
 \bigoplus_{j\in J}\Z/p\Z.
\]
Since \(P/pP\), and hence \(B/pB\), is finite, the index set \(J\) must
be finite. Therefore \(B\) is finite.

Since \(P\) is infinite and \(B\) is finite, the quotient \(P/B\) is
infinite. The structural property \cite[\S33, (\romannumeral3) on p.~139]{FuchsI} of basic subgroups, together with the classification theorem
\cite[Theorem~23.1, pp.~104--105]{FuchsI}, gives
\[
 P/B\cong
 \bigoplus_{\lambda\in\Lambda}\Z(p^\infty)
\]
for some index set \(\Lambda\). Here only Pr\"{u}fer \(p\)-groups can occur
because \(P/B\) is a \(p\)-group. Since \(P/B\) is infinite,
\(\Lambda\neq\varnothing\). Projection onto one of the summands, composed with the natural quotient map, gives a surjective homomorphism $P\twoheadrightarrow P/B\twoheadrightarrow\Z(p^\infty)$. This proves the second assertion.
\end{proof}

\begin{proposition} \label{prop:quotient-alternative}
Let $G$ be a countable locally finite abelian group such that
$|2G|=\infty$.  Then $G$ has a quotient isomorphic to at least one of
the following groups:
\begin{enumerate}
\item[(\romannumeral1)] $\bigoplus_{n=1}^\infty (\Z/4\Z)$;

\item[(\romannumeral2)] $\bigoplus_{n=1}^\infty (\Z/p\Z)$ for some odd prime $p$;

\item[(\romannumeral3)] $\bigoplus_{n=1}^\infty (\Z/p_n\Z)$, where $p_1,p_2,\ldots$ are pairwise distinct odd primes;

\item[(\romannumeral4)] $\Z(p^\infty)$ for some prime $p$.
\end{enumerate}
\end{proposition}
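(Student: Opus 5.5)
Since $G$ is countable, locally finite and abelian, it is torsion, so it splits as $G=\bigoplus_{p}G_p$ over its primary components, and $2G=\bigoplus_p 2G_p$. For odd $p$, multiplication by $2$ is an automorphism of $G_p$, so $2G_p=G_p$; for $p=2$, $2G_2$ is a subgroup of $G_2$. Hence $|2G|=\infty$ means that either $2G_2$ is infinite, or some odd $G_p$ is infinite, or infinitely many odd $G_p$ are nonzero. I split into these three cases. Each case produces a quotient of some primary component, or of a direct sum of components, and projection onto these summands is surjective, so it gives a quotient of $G$.

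\emph{Infinitely many nonzero odd components.} Choose pairwise distinct odd primes $p_1,p_2,\ldots$ with $G_{p_n}\neq0$. Each finite-or-countable $p_n$-group has a nonzero quotient $G_{p_n}/p_nG_{p_n}$ unless it is divisible. In the divisible case it has $\Z(p_n^\infty)$ as a direct summand, which surjects onto $\Z/p_n\Z$. So in either case $G_{p_n}\twoheadrightarrow\Z/p_n\Z$. Combining these coordinatewise on $\bigoplus_n G_{p_n}$ gives (iii).

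\emph{Some odd $G_p$ is infinite.} Apply Lemma~\ref{lem:primary-quotients} to $P=G_p$; this yields (ii) or (iv).

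\emph{$2G_2$ is infinite.} Here $P=G_2$ is an infinite $2$-group. If $P/2P$ is finite, Lemma~\ref{lem:primary-quotients}(2) gives $\Z(2^\infty)$, which is (iv). So assume $P/2P$ is infinite. This is the main obstacle: $\bigoplus\Z/2\Z$ is not an allowed target, so I need to find $\bigoplus\Z/4\Z$.

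I will work with $P/4P$, a group of exponent dividing $4$. By Prüfer's theorem on bounded groups, $P/4P\cong(\Z/4\Z)^{(\kappa)}\oplus(\Z/2\Z)^{(\mu)}$. If $\kappa$ is infinite, projecting onto the $\Z/4\Z$ part gives (i). If $\kappa$ is finite, then $2(P/4P)=2P/4P$ is finite. Since $2P/4P=(2P)/2(2P)$, the group $Q:=2P=2G_2$ is an infinite $2$-group with $Q/2Q$ finite. Lemma~\ref{lem:primary-quotients}(2) then gives a surjection $2P\twoheadrightarrow\Z(2^\infty)$, but this is a quotient of $2P$, not of $P$. To transfer it, I use the basic subgroup $B$ of $P$, with $P/B$ divisible, say $P/B\cong\bigoplus\Z(2^\infty)$. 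If $P/B\neq0$, projecting onto one summand gives (iv). If $P/B=0$, then $P=B$ is a direct sum of cyclic groups, and only finitely many of these have order $\geq4$ (otherwise $\kappa$ would be infinite). Then $2P$ is finite, contradicting the assumption that $2G_2$ is infinite. In every case one of (i)--(iv) is a quotient of $G$.
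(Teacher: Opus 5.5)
\textbf{Gap in the odd-component case.} The error is in your case of infinitely many nonzero odd components. You claim that when $G_{p_n}$ is divisible, its direct summand $\Z(p_n^\infty)$ surjects onto $\Z/p_n\Z$. This is false. Every homomorphic image of a divisible group is divisible, and $\Z/p_n\Z$ is not. Concretely, for any homomorphism $f:\Z(p_n^\infty)\to\Z/p_n\Z$, each $x=p_ny$ gives $f(x)=p_nf(y)=0$.

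The failure is not only in the justification. Take $G=\bigoplus_{p\text{ odd}}\Z(p^\infty)$, which falls into this case. It is divisible, while $\bigoplus_n\Z/p_n\Z$ is not, so $G$ has no quotient of type (iii) at all. The proposition survives for this $G$ only because it also has an infinite odd component, hence a $\Z(p^\infty)$ quotient.

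The fix is to order the cases as the paper does:
\begin{enumerate}
\item Treat ``some odd $G_p$ is infinite'' first. A nonzero divisible $p$-group is infinite, so it lands here and yields (ii) or (iv) by Lemma~\ref{lem:primary-quotients}.
\item In the remaining case every $G_{p_n}$ is a nonzero \emph{finite} $p_n$-group. Then $G_{p_n}=p_nG_{p_n}$ would force $G_{p_n}=p_n^kG_{p_n}=0$ for large $k$. So $G_{p_n}/p_nG_{p_n}\neq0$, and your coordinatewise surjection onto $\bigoplus_n\Z/p_n\Z$ goes through.
\end{enumerate}
Equivalently, within your own case: if some chosen $G_{p_n}$ is divisible, project onto a $\Z(p_n^\infty)$ summand to get (iv), and only otherwise conclude (iii).

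\textbf{The $2$-primary case.} This part is correct and takes a slightly different route from the paper. The paper splits on whether $Q/2Q$ is finite for $Q=2G_{(2)}$. In the finite case it obtains $Q\twoheadrightarrow\Z(2^\infty)$ from Lemma~\ref{lem:primary-quotients} and extends this map to $G_{(2)}$ using injectivity of the divisible group $\Z(2^\infty)$. In the infinite case it reads off (i) from $G_{(2)}/4G_{(2)}$, exactly as you do.

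When $\kappa<\infty$, you instead apply the basic-subgroup dichotomy to $P=G_2$ itself. If $P/B\neq0$, projection gives (iv); if $P=B$, then $2P$ is finite, a contradiction. This avoids the extension argument altogether. Note that your detour through $2P\twoheadrightarrow\Z(2^\infty)$ is never actually used. Also, since $\kappa=\dim_{\F_2}(2P/4P)$, your preliminary split on whether $P/2P$ is finite is redundant: the $\kappa$ dichotomy alone covers the whole $2$-primary case.
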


\begin{proof}
Write
\[
 G=\bigoplus_pG_{(p)},
 \qquad
 G_{(p)}:=\{x\in G:p^rx=0\text{ for some }r\geq1\},
\]
for the primary decomposition of $G$, where the direct sum is over all primes $p$, and each $G_{(p)}$ is a $p$-group. Every quotient of a primary
component is also a quotient of $G$, by composition with the canonical
projection onto that component.

Suppose first that $G_{(p)}$ is infinite for some odd prime $p$.  If
$G_{(p)}/pG_{(p)}$ is infinite, then
Lemma~\ref{lem:primary-quotients} gives a quotient of type
(\romannumeral2).  If $G_{(p)}/pG_{(p)}$ is finite, the same lemma gives a
quotient of type (\romannumeral4).

Now suppose that all odd-primary components $G_{(p)}$ are finite. If $G_{(p)}\neq0$ for infinitely many odd primes $p$, choose pairwise distinct odd primes $p_1,p_2,\ldots$ with $G_{(p_n)}\neq0$.  Since $G_{(p_n)}$ is a nonzero finite $p_n$-group, its quotient $G_{(p_n)}/p_nG_{(p_n)}$ is nonzero and there is a surjection $\varphi_n:G_{(p_n)}\twoheadrightarrow\Z/p_n\Z$. Taking the direct sum of these maps and sending all other primary components to zero gives a surjection $G\twoheadrightarrow\bigoplus_{n\geq1}\Z/p_n\Z$. This is type (\romannumeral3).

It remains to consider the case in which the odd-primary part $\bigoplus_{p\geq 3} G_{(p)}$ is finite. Multiplication by $2$ is an automorphism of this odd-primary part, so $|2G|=\infty$ implies that $Q:=2G_{(2)}$ is infinite. If $Q/2Q$ is finite, Lemma~\ref{lem:primary-quotients} gives a surjection $Q\twoheadrightarrow\Z(2^\infty)$. The group $\Z(2^\infty)$ is divisible, and hence injective in the category of abelian groups by \cite[Theorem~21.1, p.~99]{FuchsI}. The surjection therefore extends from $Q\leq G_{(2)}$ to a homomorphism on $G_{(2)}$.  The extension remains surjective, so type (\romannumeral4) follows.

Finally, suppose that $Q/2Q$ is infinite. Set $E:=G_{(2)}/4G_{(2)}$, which is countable and is annihilated by \(4\). Hence, by
the Pr\"{u}fer--Baer theorem for bounded abelian groups
\cite[Theorem~17.2, p.~88]{FuchsI}, there are at most countable index
sets \(I\) and \(J\) such that
\[
E\cong\bigoplus\limits_{i\in I}(\Z/4\Z)\oplus \bigoplus\limits_{j\in J}(\Z/2\Z).
\]
Note that $2E\cong 2G_{(2)}/4G_{(2)}=Q/2Q$. On the other hand, doubling the Pr\"{u}fer--Baer decomposition gives
\[
2E\cong
 \bigoplus\limits_{i\in I}2(\Z/4\Z)\oplus \bigoplus\limits_{j\in J}2(\Z/2\Z) \cong
 \bigoplus\limits_{i\in I}(\Z/2\Z).
\]
Since $Q/2Q$ is infinite, the index set $I$ is infinite. Projection onto the $\Z/4\Z$ summands indexed by $I$ gives a quotient isomorphic to
$\bigoplus_{n=1}^\infty (\Z/4\Z)$. Composing this projection with $G\to G_{(2)}\to E$ gives type (\romannumeral1).
\end{proof}

\begin{remark}\label{rem:four-classes}
The four alternatives are retained as distinct algebraic families. They are not asserted to be mutually exclusive as quotients of a given group. Types (\romannumeral3) and (\romannumeral4) are both infinite locally cyclic torsion groups and therefore share the construction in Section~\ref{sec:locally-cyclic}.  Types (\romannumeral1) and (\romannumeral2) are treated by similar coordinate constructions in Section \ref{sec:coordinate-cases}. In particular, we will write $\F_p$ instead of $\Z/p\Z$ to emphasize the field structure.
\end{remark}

\section{Subset sums in locally cyclic groups}\label{sec:locally-cyclic}

The purpose of this section is to prove the following proposition.

\begin{proposition}\label{prop:cyclic-cases}
Let $G$ be a group of type (\romannumeral3) or (\romannumeral4), with a given filtration $G_\bullet$ as in \eqref{eq:def-filtration}.  For every $0\leq\alpha\leq\beta\leq1$, there exists \(A\subseteq G\) such that $\mathcal L_{G_\bullet}(A)=[\alpha,\beta]$.
\end{proposition}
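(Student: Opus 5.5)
The plan is to construct \(A\) as an increasing union of finite blocks \(E_1,E_2,\ldots\), each placed inside a level \(G_{j_k}\) and disjoint from an earlier level \(G_{i_k}\). The construction alternates two operations, \emph{contraction} and \emph{extension}, and never alters densities at levels already treated. Two structural facts drive everything. First, every finite subgroup of a group of type (\romannumeral3) or (\romannumeral4) is cyclic, so each \(G_i\) is cyclic and \(G_j/G_i\) is cyclic of order \(q=[G_j:G_i]\), generated by the image of a generator \(x\) of \(G_j\). Second, since \(G\) is infinite, passing to a later level makes \(q\) as large as we like. At each stage we carry a finite set \(C\subseteq G_i\) whose subset-sum set \(\PP(C)\) has the current density \(u\) at level \(i\) and is a union of ``interval sections'' in the cosets of earlier subgroups.

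\textbf{Contraction.} Given \(C\subseteq G_i\) with density \(u\), pick \(j\) with \(q=[G_j:G_i]\) large and an integer \(11\leq m<q\). Lemma~\ref{lem_subsetsum_complete_interval} gives \(D\subseteq\N\) with \(\PP(D)=\{0,\ldots,m-1\}\). Put \(E=\{dx:d\in D\}\). Since \(d<q\), no element \(dx\) lies in \(G_i\), so \(E\subseteq G_j\setminus G_i\). Moreover
\[
\PP(C\cup E)=\PP(C)+\{0,x,\ldots,(m-1)x\},
\]
and the translates \(tx\), \(0\le t<m\), lie in distinct cosets of \(G_i\). Hence the density at level \(j\) is exactly \(um/q\). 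For an intermediate level \(l\), the cosets of \(G_i\) in \(G_l\) are those of \(tx\) with \(t\) a multiple of \(r=[G_j:G_l]\). So \(\PP(C\cup E)\cap G_l\) has density \(u\lceil m/r\rceil/(q/r)\in[um/q,u]\). Choosing \(m\approx q\alpha/u\) brings the density within \(O(q^{-1})\) of \(\alpha\). Letting \(m\) run from its maximum down to the chosen value would also sweep the density through every value between \(u\) and \(\alpha\) in steps of size \(u/q\); within the single construction this sweep is realised by the intermediate levels.

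\textbf{Extension.} Here we go up in density using Lemma~\ref{lem_missing_two} and Lemma~\ref{lem_basic1}. In a large cyclic extension \(G_j\supseteq G_i\), we choose a finite set of multiples of \(x\) whose subset sums, read modulo \(G_i\), cover every coset at least once. The aim is that each new coset receives either one translate or two adjacent translates (differing by a fixed small element of \(G_i\)) of \(\PP(C)\), with two translates in all but a bounded number of cosets. The missing-two lemma controls exactly which residues receive a single translate. Because \(\PP(C)\) has interval sections, two adjacent translates form an interval section of length roughly \(\ell+1\) instead of \(\ell\). This gains a fixed positive proportion of the gap \(1-u\), while the bounded exceptional set and the lengths give an upper bound on the gain of order \(O(q^{-1})+\) (a controlled fraction of \(1-u\)). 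Iterating finitely many extensions, we stop at the first crossing of a target \(\beta_k\nearrow\beta\) (or \(\beta_k=1-1/k\) when \(\beta=1\)). The upper bound ensures the overshoot tends to \(0\), and the small increments pass within \(o(1)\) of every \(\gamma\in(\alpha,\beta)\).

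\textbf{Assembling the cycles.} In cycle \(k\) we perform one contraction to within \(O(q_k^{-1})\) of \(\alpha\), then extensions to the first crossing of \(\beta_k\), with \(q_k\to\infty\). Every block lies outside the previous subgroup, so sums involving later blocks never land in earlier levels. Thus \(d_i(\PP(A))\) equals the density computed at the stage when level \(i\) was treated. All densities from cycle \(k\) onward lie in \([\alpha-o(1),\beta+o(1)]\), which gives \(\mathcal L_{G_\bullet}(A)\subseteq[\alpha,\beta]\). The sweeping and small-step properties in both phases give values within \(o(1)\) of each \(\gamma\in[\alpha,\beta]\) in every cycle, which gives the reverse inclusion. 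The degenerate cases \(\alpha=\beta\), \(\beta=0\), and \(\alpha=1\) need separate handling: for \(\beta=0\), use contractions only, with \(m\) fixed and \(q\to\infty\); for \(\alpha=1\), use extensions only.

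I expect the extension step to be the main obstacle. It requires a concrete integer subset-sum design producing ``two adjacent translates in almost every coset'' while preserving the interval-section structure, together with matching lower and upper bounds on the density gain. I would first try a set \(W\) from Lemma~\ref{lem_missing_two} scaled by \(x\), combined with one small element of \(G_i\) realising the adjacent shift. I would then verify by explicit counting that only the cosets of \(1\cdot x\) and \((T-1)x\) receive a single translate.
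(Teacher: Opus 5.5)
Your overall architecture is the paper's. It has separated blocks and the interval contraction built from Lemma~\ref{lem_subsetsum_complete_interval}; your contraction paragraph, including the band \([um/q,u]\) at intermediate levels, is correct. It then has ascents placing one or two adjacent translates in each new coset while preserving interval sections, stopping at the first crossing of targets tending to \(\beta\), and cycles with \(q_k\to\infty\). The genuine gap is the extension step, which you leave unconstructed, and the design you propose to try fails.

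First, if the shift element \(g\in G_i\) is itself adjoined to \(A\), the block is not \(G_i\)-separated. Then \(\PP(A)\cap G_i\) becomes \(\PP(C)\cup(\PP(C)+g)\), so the density at a level already fixed (for instance the near-\(\alpha\) value ending a contraction) changes, destroying stability and the lower limit. Second, write \(y\) for the generator of \(G_j\) (your \(x\)). The subset sums of \(\{wy:w\in W\}\) are \(\{cy:c\in\PP(W)\}\), and \(\PP(W)\) omits exactly \(1\) and \(T-1\). So in the natural range \(T<\rho:=[G_j:G_i]\), the cosets \(y+G_i\) and \((T-1)y+G_i\), which you expect to receive a single translate, receive none, and neither do the residues beyond \(T\). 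Empty sections are never refilled by later shifts. The density can then drop below \(u\) at the new and intermediate levels, and the gain estimate, which needs every section nonempty, breaks.

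What works (Lemma~\ref{lem:lc-two-translate-extension}) is to take \(\rho\geq100\), \(W\) from Lemma~\ref{lem_missing_two} with \(T=\rho-3\), the shift \(x\) a generator of the terminal level \(G_{i_{k,1}}\) of the current cycle's contraction, and
\[
D:=\{y,\;y+x\}\cup\{wy:w\in W\}.
\]
The pair \(\{y,y+x\}\) does double duty. Choosing \(y\) or \(y+x\) selects the shift \(0\) or \(x\) without putting anything into \(G_i\). Also \(\PP(W)+\{0,1,2\}=\{0,\ldots,\rho-1\}\) fills the residues \(\PP(W)\) omits, so every coset receives at least one translate. Every nonempty subset sum is \(sy+\varepsilon x\) with \(1\leq s\leq\rho-1\) and \(\varepsilon\in\{0,1\}\), so \(D\) is separated. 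The coset of \((c+1)y\) receives both shifts for each of the \(\rho-4\) values \(c\in\PP(W)\).

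You also need the right quantitative bounds, which are then short. Let \(S\) be the subset-sum set at the start of cycle \(k\). Let the current set \(X\subseteq G_i\) have sections \(z+S+I_zx\) in the cosets \(z+G_{i_{k,1}}\), with \(I_z\) a nonempty integer interval. Each section gains at most one translate of \(S\). Hence at every level between \(i\) and \(j\) the density lies in \([u,\,u+|S|/|G_{i_{k,1}}|]\), and every increment is at most \(|S|/|G_{i_{k,1}}|\leq1/q_k\), uniformly in \(u\). This bound, not the bounded exceptional set, is what sends the first-crossing overshoot and the gaps between consecutive ascent values to \(0\). Conversely, since \(x\) generates \(G_{i_{k,1}}\), a nonempty nonfull section is not \(x\)-invariant. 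This yields a terminal gain of at least \((1-4/\rho)(1-u)/(|G_{i_{k,1}}|-1)\), hence the crossing of every target below \(1\).

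Finally, your claim that the contraction sweep is ``realised by the intermediate levels'' is false in general, since the fixed filtration need not have any level strictly between \(i\) and \(j\). This is harmless, because the small ascent steps alone produce every \(\gamma\in(\alpha,\beta)\).
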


A group $G$ of type (\romannumeral3) or (\romannumeral4) is locally cyclic. That is to say, every finitely generated subgroup of $G$ is cyclic. In particular, each filtration subgroup $G_i$ is cyclic. For \(y\in G\), we write \(\langle y\rangle\) for the subgroup generated by \(y\).

Moreover, since $G$ is infinite, no finite subgroup can occur infinitely often in an increasing filtration. Deleting repetitions therefore preserves the lower limit, upper limit, and set of subsequential limits of every finite-level density sequence.  Hence, in this section, we always assume that $G_i<G_{i+1}$ for all $i\geq 1$.

\subsection{Separated extension sets}

Let $H\leq G$. A finite set $E\subseteq G\setminus H$ is said to be
\emph{$H$-separated} if
\[
\PP(E)\cap H=\{0\}.
\]

The separation condition ensures that later extension sets do not create new subset sums in an earlier filtration subgroup.

\begin{lemma}\label{lem:lc-separated-stage-stability}
Let $i_0<i_1<i_2<\cdots$ be a finite or infinite sequence of filtration levels. Let \(A^{[0]}\subseteq G_{i_0}\). For $s\geq 1$, let $E_s\subseteq G_{i_s}\setminus G_{i_{s-1}}$ be \(G_{i_{s-1}}\)-separated sets, and define $A^{[s]}:=A^{[0]}\cup E_1\cup\cdots\cup E_s$. Then
\begin{equation}\label{eq:lc-stage-stability}
\PP(A^{[t]})\cap G_i
=
\PP(A^{[s]})\cap G_i,\qquad (0\leq s\leq t,\,\,i\leq i_s).
\end{equation}
If the sequence is infinite and $A:=A^{[0]}\cup\bigcup_{s\geq1}E_s$, then
\begin{equation}\label{eq:lc-final-stability}
\PP(A)\cap G_i
=
\PP(A^{[s]})\cap G_i
\qquad (s\geq 0,\,\, i\leq i_s).
\end{equation}
\end{lemma}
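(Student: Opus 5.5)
The plan is to reduce \eqref{eq:lc-stage-stability} to a single-step statement and then iterate. The single step is
\[
\PP(A^{[s+1]})\cap G_i=\PP(A^{[s]})\cap G_i\qquad(i\leq i_s).
\]
Chaining this from $s$ to $t$ gives \eqref{eq:lc-stage-stability}, because $i\le i_s\le i_{s'}$ for every $s'\ge s$.

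For the single step, the inclusion $\supseteq$ is trivial since $A^{[s]}\subseteq A^{[s+1]}$. For the reverse inclusion, take $x\in\PP(A^{[s+1]})\cap G_i$. The set $E_{s+1}$ is disjoint from $G_{i_s}\supseteq A^{[s]}$, so $x$ splits as $x=y+e$ with $y\in\PP(A^{[s]})$ and $e\in\PP(E_{s+1})$.

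Next I claim $A^{[s]}\subseteq G_{i_s}$. This follows by induction: $A^{[0]}\subseteq G_{i_0}$, and each $E_r\subseteq G_{i_r}\subseteq G_{i_s}$ for $r\leq s$. Since $G_{i_s}$ is a subgroup, this gives $y\in G_{i_s}$. We also have $x\in G_i\subseteq G_{i_s}$. Hence $e=x-y\in G_{i_s}$, and therefore $e\in\PP(E_{s+1})\cap G_{i_s}=\{0\}$ by separation. So $x=y\in\PP(A^{[s]})\cap G_i$, which proves the single step.

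For \eqref{eq:lc-final-stability}, I use that any finite subset sum from $A$ involves only finitely many elements. Those elements all lie in $A^{[t]}$ for some $t\geq s$, so $\PP(A)=\bigcup_t\PP(A^{[t]})$, an increasing union. Intersecting with $G_i$ for $i\leq i_s$ and applying \eqref{eq:lc-stage-stability} to each $t\geq s$ shows that every term in the union agrees with $\PP(A^{[s]})\cap G_i$, which gives \eqref{eq:lc-final-stability}.

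There is no serious obstacle here. The only point needing care is the containment $A^{[s]}\subseteq G_{i_s}$, which lets the separation hypothesis be applied to $e=x-y$.
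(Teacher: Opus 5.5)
Your proof is correct and follows essentially the same route as the paper. The paper also proves the one-step identity $\PP(A^{[t]})\cap G_{i_{t-1}}=\PP(A^{[t-1]})\cap G_{i_{t-1}}$ by writing an element as $a+e$ with $a\in\PP(A^{[t-1]})\subseteq G_{i_{t-1}}$ and using separation to force $e=0$, then iterates, intersects with $G_i$, and handles the infinite case by taking the increasing union of the $\PP(A^{[t]})$ over $t\geq s$.
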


\begin{proof}
For \(t\geq1\), the conditions show that $A^{[t-1]}\cap E_t=\varnothing$. Every element of
\[
\PP(A^{[t]})
=
\PP(A^{[t-1]})+\PP(E_t)
\]
has the form \(a+e\), where \(a\in\PP(A^{[t-1]})\subseteq G_{i_{t-1}}\) and \(e\in\PP(E_t)\).  If \(a+e\in G_{i_{t-1}}\), then
\(e=(a+e)-a\in G_{i_{t-1}}\).  Separation therefore forces \(e=0\), and hence
\[
\PP(A^{[t]})\cap G_{i_{t-1}}
=
\PP(A^{[t-1]})\cap G_{i_{t-1}}.
\]
Iterating this identity from \(t\) down to \(s\), and then intersecting with \(G_i\leq G_{i_s}\), proves
\eqref{eq:lc-stage-stability}.

Every element of \(\PP(A)\) is a sum of finitely many elements of
\(A\), so it belongs to \(\PP(A^{[t]})\) for some finite \(t\).
Equation~\eqref{eq:lc-final-stability} now follows from
\eqref{eq:lc-stage-stability} by taking the union over $t\geq s$.
\end{proof}

\subsection{Interval contraction}

The next lemma produces the decreasing part of the density
construction.  It gives an exact density at the terminal filtration
level and uniform bounds at all intermediate filtration levels.

\begin{lemma}\label{lem:lc-interval-contraction}
Let \(i<j\) and $q:=[G_j:G_i]$. Choose a generator \(x\) of \(G_j\), and suppose that $11\leq m\leq q-1$. Then there is a \(G_i\)-separated set $E\subseteq G_j\setminus G_i$ such that, for any \(C\subseteq G_i\),
\begin{equation}\label{eq:lc-contraction-structure}
\PP(C\cup E)
=
\PP(C)+\{0,x,2x,\ldots,(m-1)x\}.
\end{equation}
Moreover, for every filtration level \(\ell\) with
\(i\leq\ell\leq j\),
\begin{equation}\label{eq:lc-contraction-band}
\frac{|\PP(C)|}{|G_i|}\frac{m}{q}
\leq
\frac{|\PP(C\cup E)\cap G_\ell|}{|G_\ell|}
\leq
\frac{|\PP(C)|}{|G_i|}.
\end{equation}
Furthermore, the density at the terminal level \(\ell=j\) is
\[
\frac{|\PP(C\cup E)|}{|G_j|}=\frac{|\PP(C)|}{|G_i|}\frac{m}{q}.
\]
\end{lemma}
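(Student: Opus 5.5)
The plan is to build $E$ as the image under $k\mapsto kx$ of a finite integer set whose subset sums form an initial interval, using Lemma~\ref{lem_subsetsum_complete_interval}. Since $G_j$ is cyclic of order $n:=|G_j|$ generated by $x$, and $G_i$ has index $q$ in $G_j$, we have $G_i=\langle qx\rangle$. Moreover, $kx\in G_i$ if and only if $q\mid k$. Apply Lemma~\ref{lem_subsetsum_complete_interval} with $t=m-1\geq 10$ to get $D\subseteq\N$ with $\PP(D)=\{0,1,\ldots,m-1\}$; in particular $D\subseteq\{1,\ldots,m-1\}$. Put $E:=\{dx:d\in D\}$.

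The first step checks the basic properties of $E$. Every $d\in D$ satisfies $1\leq d\leq m-1<q\leq n$. Hence the elements $dx$ are distinct, and none lies in $G_i$, so $E\subseteq G_j\setminus G_i$. A subset sum of $E$ equals $\bigl(\sum_{d\in F}d\bigr)x$ for $F\subseteq D$, so $\PP(E)=\{0,x,\ldots,(m-1)x\}$. Separation follows: $kx\in G_i$ with $0\leq k\leq m-1<q$ forces $k=0$. Since $C\subseteq G_i$ is disjoint from $E$, we get $\PP(C\cup E)=\PP(C)+\PP(E)$, which is \eqref{eq:lc-contraction-structure}. The translates $\PP(C)+kx$ for $0\leq k\leq m-1$ lie in distinct cosets of $G_i$, because the $k$ are distinct modulo $q$. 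Hence they are pairwise disjoint, and $|\PP(C\cup E)|=m|\PP(C)|$. Dividing by $|G_j|=q|G_i|$ gives the terminal-level formula. Note also that $\PP(C)\subseteq G_i$, so $|\PP(C)|/|G_i|$ is the density of $\PP(C)$ at level $i$.

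The step needing the most care is the intermediate band \eqref{eq:lc-contraction-band}. For $i\leq\ell\leq j$, put $r:=[G_\ell:G_i]$ and $s:=q/r=[G_j:G_\ell]$. In the cyclic group $G_j$, the subgroup $G_\ell$ is the unique one of index $s$, namely $\langle sx\rangle$, so $kx\in G_\ell$ if and only if $s\mid k$. Each translate $\PP(C)+kx$ lies entirely in the coset $kx+G_i$, which is inside $G_\ell$ or disjoint from it. Therefore
\[
\PP(C\cup E)\cap G_\ell=\PP(C)+\{kx:0\leq k\leq m-1,\ s\mid k\},
\]
again a disjoint union. It consists of $\lceil m/s\rceil$ translates, giving the density
\[
\frac{|\PP(C)|\lceil m/s\rceil}{r|G_i|}
=\frac{|\PP(C)|}{|G_i|}\cdot\frac{s\lceil m/s\rceil}{q}.
\]
The lower bound $s\lceil m/s\rceil\geq m$ is immediate. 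For the upper bound, $m\leq q$ and $s\mid q$ give $\lceil m/s\rceil\leq q/s$, so $s\lceil m/s\rceil\leq q$. This yields \eqref{eq:lc-contraction-band}. The endpoint cases $\ell=i$ (with $s=q$) and $\ell=j$ (with $s=1$) are recovered as special cases.
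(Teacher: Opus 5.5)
Your proposal is correct and follows the paper's proof essentially step for step. You take the same set $E=\{dx:d\in D\}$ from Lemma~\ref{lem_subsetsum_complete_interval} with $t=m-1$, use the same separation argument, and make the same count: the $\lceil m/s\rceil=\lceil md/q\rceil$ translates that land in $G_\ell$, with the bounds $m\leq s\lceil m/s\rceil\leq q$.
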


\begin{proof}
Since $m-1\geq 10$, Lemma~\ref{lem_subsetsum_complete_interval} gives a set $D\subseteq \{1,2,\ldots,m-1\}$ with $\PP(D)=\{0,1,\ldots,m-1\}$. Since \([G_j:G_i]=q\), one has $G_i=\langle qx\rangle$. Define $E:=\{kx:\, k\in D\}$. Then $\PP(E)=\{0,x,2x,\ldots,(m-1)x\}$. Because \(m-1<q\), we have $\PP(E)\cap G_i=\{0\}$. Thus \(E\) is \(G_i\)-separated, and \eqref{eq:lc-contraction-structure} follows.

Fix \(\ell\) with \(i\leq\ell\leq j\), and write $d:=[G_\ell:G_i]$. Then $G_\ell=\left\langle\frac{q}{d}x\right\rangle$. Among the integers \(0,1,\ldots,m-1\), precisely $\left\lceil\frac{md}{q}\right\rceil$ are multiples of \(q/d\). Hence exactly this many of the translates
\[
\PP(C),\quad \PP(C)+x,\quad\ldots,\quad
\PP(C)+(m-1)x
\]
lie in \(G_\ell\). These translates occupy distinct \(G_i\)-cosets and therefore are pairwise disjoint.  It follows that
\[
|\PP(C\cup E)\cap G_\ell|
=
|\PP(C)|\left\lceil\frac{md}{q}\right\rceil.
\]
Combining the facts \(|G_\ell|=d|G_i|\) and $md/q \leq
\left\lceil md/q\right\rceil
\leq d$ proves \eqref{eq:lc-contraction-band}.

When \(\ell=j\), one has \(d=q\), which gives the final assertion.
\end{proof}

\subsection{Two-translate extension sets}

For a finite set \(I\subseteq\mathbb Z\) and \(x\in G\), write
\[
Ix:=\{ax:a\in I\}.
\]

The following lemma produces an extension set whose subset sums give one or two translates by a fixed element in every new coset.

\begin{lemma}\label{lem:lc-two-translate-extension}
Let \(i<j\), and suppose that $\rho:=[G_j:G_i]\geq 100$. Choose a generator \(y\) of \(G_j\).  Let \(0\neq x\in G_i\). Then there is a \(G_i\)-separated set $D\subseteq G_j\setminus G_i$ and nonempty sets $\Delta_s\subseteq\{0,1\}$ $(0\leq s<\rho)$ such that
\begin{equation}\label{eq:lc-two-translate-decomposition}
\PP(D) = \bigcup_{s=0}^{\rho-1}\bigl(sy+\Delta_sx\bigr).
\end{equation}
Moreover, $\Delta_s=\{0,1\}$ for at least \(\rho-4\) values of \(s\).
\end{lemma}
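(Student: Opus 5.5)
The plan is to build $D$ from a complete-interval set of multiples of $y$, together with one extra element $y+x$ that carries the translate by $x$.

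\emph{Setup.} Since $G_j$ is cyclic with generator $y$ and $[G_j:G_i]=\rho$, we have $G_i=\langle \rho y\rangle$. Hence the $G_i$-cosets in $G_j$ are exactly the sets $sy+G_i$ for $0\leq s<\rho$, and these are pairwise distinct. Because $x\in G_i$, the elements $sy$ and $sy+x$ both lie in the coset indexed by $s$. So if every subset sum of $D$ has the form $sy+\delta x$ with $0\leq s\leq\rho-1$ and $\delta\in\{0,1\}$, then the decomposition \eqref{eq:lc-two-translate-decomposition} is automatic. We then only need to read off which $\delta$ occur for each $s$.

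\emph{Construction.} Put $t:=\rho-2\geq 98\geq 10$. Lemma~\ref{lem_subsetsum_complete_interval} gives a set $D'\subseteq\N$ with $\PP(D')=\{0,1,\ldots,t\}$; necessarily $D'\subseteq\{1,\ldots,t\}$. Define
\[
D:=\{ky:\,k\in D'\}\cup\{y+x\}.
\]
The element $y+x$ is distinct from every $ky$ with $k\in D'$. The case $k=1$ is the only one needing a check, and there $x\neq0$ gives $y+x\neq y$. Every element of $D$ has $y$-coefficient between $1$ and $t<\rho$, so none of them lies in $G_i$, and $D\subseteq G_j\setminus G_i$. Splitting according to whether $y+x$ is used, we get
\[
\PP(D)=\{0,1,\ldots,t\}y\ \cup\ \bigl(\{1,\ldots,t+1\}y+x\bigr).
\]
All $y$-coefficients that occur lie in $[0,t+1]=[0,\rho-1]$, so there is no wraparound modulo $\rho$. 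Reading off the cosets gives:
\begin{itemize}
\item $\Delta_0=\{0\}$;
\item $\Delta_s=\{0,1\}$ for $1\leq s\leq \rho-2$;
\item $\Delta_{\rho-1}=\{1\}$.
\end{itemize}
In particular every $\Delta_s$ is nonempty, and $\Delta_s=\{0,1\}$ for $\rho-2\geq\rho-4$ values of $s$.

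\emph{Separation.} An element $sy+\delta x$ of $\PP(D)$ lies in $G_i$ only when $s\equiv 0\pmod\rho$, that is, $s=0$. The only subset sum with $y$-coefficient $0$ is the empty sum, because the element $y+x$ contributes coefficient $1$. Hence $\PP(D)\cap G_i=\{0\}$, and $D$ is $G_i$-separated.

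\emph{Main obstacle.} The only delicate point is the bookkeeping that identifies subset sums with coset data. We must make sure that the integer coefficients of $y$ never reach $\rho$, since otherwise two sums could collapse into the same coset or fall into $G_i$. The choice $t=\rho-2$ is made precisely to guarantee this. If later arguments also require the sets $\Delta_s$ to have extra structure, for instance exceptional cosets placed symmetrically at both ends, the same scheme can be run with Lemma~\ref{lem_missing_two} in place of Lemma~\ref{lem_subsetsum_complete_interval}. That variant needs a more careful count of the exceptional indices in order to stay within the allowance of four.
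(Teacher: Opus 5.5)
Your proof is correct and is essentially the paper's construction. The paper takes $D=\{y,y+x\}\cup\{wy:w\in W\}$ with $W$ from Lemma~\ref{lem_missing_two} applied to $T=\rho-3$. For $\rho\geq 100$ that $W$ is exactly $D'\setminus\{1\}$, where $D'$ is the set built in the proof of Lemma~\ref{lem_subsetsum_complete_interval} with $t=\rho-2$, so the two sets $D$ coincide.

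Your bookkeeping through the complete interval $\PP(D')=\{0,1,\ldots,\rho-2\}$ is more direct than the paper's. It gives the exact pattern $\Delta_0=\{0\}$, $\Delta_{\rho-1}=\{1\}$ and $\Delta_s=\{0,1\}$ otherwise, that is, $\rho-2$ full cosets; the paper's count via $\PP(W)$ only records $\rho-4$ of them. The one step you state too briefly is $ky\neq y+x$ for $k\geq 2$. It holds because $2\leq k\leq\rho-2$ puts $ky$ in the coset $ky+G_i$, which differs from $y+G_i\ni y+x$.
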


\begin{proof}
Apply Lemma~\ref{lem_missing_two} with \(T=\rho-3\).  It gives a finite set \(W\subseteq\mathbb N\) such that
\[
\PP(W)
=
\{0,1,\ldots,\rho-3\}\setminus\{1,\rho-4\}.
\]
In particular, the sum of all elements of \(W\) is \(\rho-3\).

Since \(G_i=\langle\rho y\rangle\) and \(0\neq x\in G_i\), there is an integer \(t\) with $1\leq t<|G_i|$ such that $x=t\rho y$. Define
\[
D:=\{y,y+x\}\cup\{wy:w\in W\} = \big(\{1,1+\rho t\}\cup W\big)y.
\]
Because \(1\notin\PP(W)\), every \(w\in W\) satisfies \(2\leq w\leq\rho-3\). Thus the coefficients of \(y\) occurring in \(D\) are $1$, $1+t\rho$ and $w\in W$, which are distinct modulo \(|G_j|\). Hence, the displayed elements in $D$ are distinct. Moreover, none of these coefficients is divisible by \(\rho\), so every element of \(D\) lies in \(G_j\setminus G_i\).

Consider a nonempty subset of \(D\).  Let \(\varepsilon_0,\varepsilon_1\in\{0,1\}\) record whether \(y\) and \(y+x\), respectively, are selected, and let \(W'\subseteq W\) record the remaining selected elements.  Its sum is
\[
sy+\varepsilon_1x,
\qquad
s:=\varepsilon_0+\varepsilon_1+\sum_{w\in W'}w.
\]
Because the subset is nonempty and the total of \(W\) is \(\rho-3\), one has $1\leq s\leq\rho-1$. Thus the sum does not belong to \(G_i\). Hence $\PP(D)\cap G_i=\{0\}$, and \(D\) is \(G_i\)-separated.

Since $\PP(W)+\{0,1,2\}=\{0,1,\ldots,\rho-1\}$, every value \(s\in\{0,1,\ldots,\rho-1\}\) occurs.  For a fixed \(s\), the possible \(G_i\)-components are \(0\) and \(x\).  This gives nonempty sets \(\Delta_s\subseteq\{0,1\}\) and proves \eqref{eq:lc-two-translate-decomposition}.

Finally, fix \(c\in\PP(W)\), and choose a subset \(W_c\subseteq W\) whose sum is \(c\).  Selecting \(y\) together with the elements \(wy\) with \(w\in W_c\), gives the sum $(c+1)y$, whereas selecting \(y+x\) with the same elements gives $(c+1)y+x$. Therefore \(\Delta_{c+1}=\{0,1\}\).  The values \(c+1\) are distinct, and $|\PP(W)|=\rho-4$. This proves the final assertion.
\end{proof}

\subsection{Expansion estimates}

Let \(i_0\leq i\), let \(x\) be a generator of \(G_{i_0}\), and let \(\varnothing\neq S\subseteq G_{i_0}\).  We say that a set \(T\subseteq G_i\) has \emph{\((S,x)\)-interval sections over \(G_{i_0}\)} if, for every coset \(z+G_{i_0}\) in \(G_i\), there is a nonempty finite interval \(I_z\subseteq\mathbb Z\) such that
\[
T\cap(z+G_{i_0})=z+S+I_zx.
\]

\begin{lemma}\label{lem:lc-one-step-expansion}
Let \(i_0\leq i<j\), let \(x\neq 0\) be a generator of \(G_{i_0}\), and let \(\varnothing\neq S\subseteq G_{i_0}\).  Suppose that \(T\subseteq G_i\) has \((S,x)\)-interval sections over \(G_{i_0}\). Assume that $\rho:=[G_j:G_i]\geq 100$, and let \(D\subseteq G_j\setminus G_i\) be the two-translate extension set from Lemma~\ref{lem:lc-two-translate-extension}, constructed with the same element \(x\). Put
\[
T':=T+\PP(D),
\qquad
u:=\frac{|T|}{|G_i|}.
\]
Then $T'$ has $(S,x)$-interval sections over $G_{i_0}$. Moreover, for every filtration level $\ell$ with $i\leq\ell\leq j$,
\begin{equation} \label{eq_cyclic_twosidedensity}
u
\leq
\frac{|T'\cap G_\ell|}{|G_\ell|}
\leq
u+\frac{|S|}{|G_{i_0}|}.
\end{equation}
At the terminal level,
\begin{equation} \label{eq_cyclic_quantitive_increasing}
\frac{|T'|}{|G_j|}
\geq
u+
\left(1-\frac{4}{\rho}\right)
\frac{1-u}{|G_{i_0}|-1}.
\end{equation}
\end{lemma}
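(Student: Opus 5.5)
The plan is to compute $T'$ coset by coset, using the explicit form of $\PP(D)$ from Lemma~\ref{lem:lc-two-translate-extension}. Since $G_j=\langle y\rangle$ and $[G_j:G_i]=\rho$, the cosets of $G_i$ in $G_j$ are $sy+G_i$ for $0\leq s<\rho$. Because $T\subseteq G_i$ and $\Delta_s x\subseteq G_i$, decomposition \eqref{eq:lc-two-translate-decomposition} gives
\[
T'=\bigcup_{s=0}^{\rho-1}\bigl(sy+T+\Delta_s x\bigr),
\]
where the $s$-th piece lies in the coset $sy+G_i$. The pieces are therefore pairwise disjoint, and $|T'|=\sum_s|T+\Delta_s x|$.

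\emph{Interval sections.} Every $G_{i_0}$-coset in $G_j$ has the form $sy+z_0+G_{i_0}$ with $z_0\in G_i$. Since $x\in G_{i_0}$, adding $\Delta_s x$ does not move between $G_{i_0}$-cosets. Hence
\[
T'\cap(sy+z_0+G_{i_0})=sy+z_0+S+(I_{z_0}+\Delta_s)x .
\]
Here $I_{z_0}+\Delta_s$ is a nonempty interval, because $\Delta_s$ is $\{0\}$, $\{1\}$ or $\{0,1\}$. This gives the $(S,x)$-interval section property for $T'$.

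\emph{Two-sided bound \eqref{eq_cyclic_twosidedensity}.} Fix $\ell$ with $i\leq\ell\leq j$. The subgroup $G_\ell$ is a union of some of the cosets $sy+G_i$, so it suffices to bound $|T+\Delta_s x|/|G_i|$ for each $s$.
\begin{itemize}
\item \emph{Lower bound.} Translation preserves size, so $|T+\Delta_s x|\geq |T|$.
\item \emph{Upper bound.} In each $G_{i_0}$-section, enlarging $I$ to $I\cup(I+1)$ adds at most the single translate $S+(\max I+1)x$, hence at most $|S|$ points. There are $|G_i|/|G_{i0}|$ sections in $G_i$, so $|T+\{0,1\}x|\leq |T|+|S|\,|G_i|/|G_{i_0}|$.
\end{itemize}
Averaging over the cosets contained in $G_\ell$ yields \eqref{eq_cyclic_twosidedensity}.

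\emph{Terminal lower bound \eqref{eq_cyclic_quantitive_increasing}.} The key claim is the following. Take a section $U=z+S+Ix$ that is not the whole coset $z+G_{i_0}$, and let $m_z:=|G_{i_0}|-|U|>0$ be its number of missing points. Then $|U\cup(U+x)|\geq |U|+1\geq |U|+m_z/(|G_{i_0}|-1)$.

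\begin{itemize}
\item \emph{Strict growth.} We have $U\cup(U+x)=U\cup\bigl(z+S+(\max I+1)x\bigr)$. If this equalled $U$, then $U+x\subseteq U$, so the finite set $U$ would be invariant under translation by $x$. Since $x$ generates $G_{i_0}$, this forces $U=z+G_{i_0}$, contradicting $m_z>0$.
\item \emph{Comparison with the missing proportion.} Because $S\neq\varnothing$, we have $m_z\leq |G_{i_0}|-1$, so $1\geq m_z/(|G_{i_0}|-1)$. Full sections have $m_z=0$ and need no gain.
\end{itemize}

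Summing over the sections of $G_i$ shows that, whenever $\Delta_s=\{0,1\}$,
\[
|T+\Delta_s x|\geq |T|+\frac{|G_i|-|T|}{|G_{i_0}|-1}.
\]
The denominator is positive since $x\neq0$ forces $|G_{i_0}|\geq2$. By Lemma~\ref{lem:lc-two-translate-extension} this holds for at least $\rho-4$ values of $s$, and every other $s$ contributes at least $|T|$. Therefore
\[
|T'|\geq \rho|T|+(\rho-4)\frac{|G_i|-|T|}{|G_{i_0}|-1}.
\]
Dividing by $|G_j|=\rho|G_i|$ gives \eqref{eq_cyclic_quantitive_increasing}.

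The step I expect to need the most care is the strict-growth claim. It is the only place where the hypothesis that $x$ generates $G_{i_0}$ is used, through the translation-invariance argument. Everything else is bookkeeping over the disjoint cosets.
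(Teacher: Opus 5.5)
Your proposal is correct and follows essentially the same route as the paper: you use the same coset-by-coset decomposition of $T'$, the same preservation of interval sections, the same $|S|$-per-section upper bound, and the same translation-invariance argument showing that each nonfull section strictly grows. The only difference is cosmetic: you bound the gain section by section via $1\geq m_z/(|G_{i_0}|-1)$, whereas the paper bounds $\omega=|(T+x)\setminus T|$ from below by the number of nonfull sections, which amounts to the same computation.
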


\begin{proof}
Let $y$ and the sets $\Delta_s\subseteq\{0,1\}$ $(0\leq s<\rho)$ be as in Lemma~\ref{lem:lc-two-translate-extension}. Thus every $\Delta_s$ is nonempty, at least $\rho-4$ of them are equal to $\{0,1\}$, and
\[
\PP(D)\cap(sy+G_i)=sy+\Delta_sx.
\]
Since $T\subseteq G_i$, it follows that \begin{equation} \label{eq_T'_Gi_cosets} T'\cap(sy+G_i) = sy+(T+\Delta_sx).
\end{equation}

Choose a set $Z$ of representatives for the cosets of $G_{i_0}$ in $G_i$. Note that \(T\) has \((S,x)\)-interval sections over \(G_{i_0}\). For every $z\in Z$, there is a nonempty finite interval $I_z\subseteq\mathbb Z$ such that
\begin{equation} \label{eq_T_Gi0_cosets}
T\cap(z+G_{i_0})=z+S+I_zx.
\end{equation}
Because $x\in G_{i_0}$, the corresponding section of $T'$ is
\begin{equation} \label{eq_T'_Gi0_cosets}
T'\cap(sy+z+G_{i_0}) = sy+z+S+(I_z+\Delta_s)x.
\end{equation}
If $\Delta_s$ is a singleton, then $I_z+\Delta_s$ is a translate of $I_z$. If $\Delta_s=\{0,1\}$, then $I_z+\Delta_s=I_z\cup(I_z+1)$, which is again an interval. Hence $T'$ has $(S,x)$-interval sections over $G_{i_0}$.

We next establish the density estimate \eqref{eq_cyclic_twosidedensity}. Since $\Delta_s\neq\varnothing$, the set $T+\Delta_sx$ contains a translate of $T$, and therefore $|T+\Delta_sx|\geq |T|$. For the upper bound, fix $z\in Z$. If $\Delta_s$ is a singleton, then $|S+(I_z+\Delta_s)x|=|S+I_zx|$. If $I_z=\{a,a+1,\ldots,b\}$ and $\Delta_s=\{0,1\}$, then
\[
S+(I_z+\Delta_s)x
=
\bigl(S+I_zx\bigr)\cup\bigl(S+(b+1)x\bigr).
\]
Consequently, in either case,
\[
|S+(I_z+\Delta_s)x|
\leq
|S+I_zx|+|S|.
\]
Summing the preceding inequality over $z\in Z$ and using \eqref{eq_T_Gi0_cosets}
gives
\[
|T+\Delta_sx|
\leq
|T|+[G_i:G_{i_0}]|S|.
\]

Fix $i\leq\ell\leq j$, and put $d:=[G_\ell:G_i]$. The group $G_\ell$ is the disjoint union of $d$ cosets of $G_i$. Applying the preceding estimates to the \(d\) cosets of \(G_i\) contained in \(G_\ell\), and using \eqref{eq_T'_Gi_cosets}, we obtain
\[
d|T|
\leq
|T'\cap G_\ell|
\leq
d\bigl(|T|+[G_i:G_{i_0}]|S|\bigr).
\]
Dividing by $|G_\ell|=d|G_i|$ and using $|G_i|=[G_i:G_{i_0}]|G_{i_0}|$, the inequality \eqref{eq_cyclic_twosidedensity} follows.

It remains to prove \eqref{eq_cyclic_quantitive_increasing}. Put $\omega:=|(T+x)\setminus T|$ and
\[
g:=
\#\{s\in\{0,\ldots,\rho-1\}:\,
\Delta_s=\{0,1\}\}.
\]
If $\Delta_s$ is a singleton, then $|T+\Delta_sx|=|T|$. If $\Delta_s=\{0,1\}$, then $T+\Delta_sx=T\cup(T+x)$, and hence $|T+\Delta_sx|=|T|+\omega$.
Since the $\rho$ cosets of $G_i$ in $G_j$ are disjoint, one has $|T'|=\rho|T|+g\omega$. Therefore
\[
\frac{|T'|}{|G_j|}
=
u+\frac{g}{\rho}\frac{\omega}{|G_i|}.
\]

For $z\in Z$, put $A_z:=T\cap(z+G_{i_0})$. Every $A_z$ is nonempty. If $A_z\neq z+G_{i_0}$, then $(A_z+x)\setminus A_z\neq\varnothing$.
Indeed, otherwise $A_z+x\subseteq A_z$. Since translation preserves cardinality, this would imply $A_z+x=A_z$. Because $x$ generates $G_{i_0}$, the set $A_z$ would then be invariant under every translation by an element of $G_{i_0}$, forcing $A_z=z+G_{i_0}$, a contradiction.

Let $r$ be the number of $G_{i_0}$-cosets  in which the section of $T$ is not full. Since translation by $x$ preserves each such coset, the preceding observation gives $\omega\geq r$. Every nonfull section is nonempty and therefore omits at most $|G_{i_0}|-1$ elements. Hence $|G_i|-|T| \leq r\bigl(|G_{i_0}|-1\bigr)$. Consequently, $\omega\geq \frac{|G_i|-|T|}{|G_{i_0}|-1}$. Finally, $g\geq\rho-4$. Substituting these estimates into the formula for $|T'|/|G_j|$ gives
\[
\frac{|T'|}{|G_j|}
\geq
u+
\left(1-\frac{4}{\rho}\right)
\frac{|G_i|-|T|}
{|G_i|(|G_{i_0}|-1)}
=
u+
\left(1-\frac{4}{\rho}\right)
\frac{1-u}{|G_{i_0}|-1}.
\]
\end{proof}

The next lemma shows that repeated two-translate extensions eventually raise the density above any given value less than \(1\).

\begin{lemma}\label{lem:lc-expansion-to-one}
Let \(i_0<i_1\), and $q:=[G_{i_1}:G_{i_0}]$. Choose a generator \(x\neq 0\) of \(G_{i_1}\).  Let \(\varnothing\neq S\subseteq G_{i_0}\), and let \(1\leq m<q\). Set
\[
T_1:=S+\{0,x,\ldots,(m-1)x\},
\qquad
u_1:=\frac{|T_1|}{|G_{i_1}|}
=
\frac{m}{q}\frac{|S|}{|G_{i_0}|}.
\]
For every \(h\) satisfying $u_1<h<1$, there is an integer \(N\geq2\), filtration levels $i_1<i_2<\cdots<i_N$, and \(G_{i_{s-1}}\)-separated two-translate extension sets $D_s\subseteq G_{i_s}\setminus G_{i_{s-1}}$ $(2\leq s\leq N)$ with $[G_{i_s}:G_{i_{s-1}}]\geq100$, such that the sets and densities
\[
T_s:=T_{s-1}+\PP(D_s)\qquad \text{and}
\qquad
u_s:=\frac{|T_s|}{|G_{i_s}|}
\]
satisfy the following estimates: First,
\begin{equation}\label{eq:lc-expansion-first-crossing}
h\leq u_N < h+\frac1q.
\end{equation}
Second, for \(2\leq s\leq N\) and every filtration level \(\ell\) with \(i_{s-1}\leq\ell\leq i_s\),
\[
u_1 \leq \frac{|T_s\cap G_\ell|}{|G_\ell|} <h+\frac1q.
\]
\end{lemma}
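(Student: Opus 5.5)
The plan is to iterate Lemma~\ref{lem:lc-one-step-expansion}, always taking the base subgroup of that lemma to be $G_{i_1}$ rather than $G_{i_0}$, and to stop at the first level where the density reaches $h$.

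\textbf{Starting set.} The set $T_1=S+\{0,x,\ldots,(m-1)x\}$ lies in $G_{i_1}$, and $G_{i_1}$ is the only coset of itself. Since $\varnothing\neq S\subseteq G_{i_0}\subseteq G_{i_1}$ and $x$ generates $G_{i_1}$, $T_1$ has $(S,x)$-interval sections over $G_{i_1}$, with the single interval $I_0=\{0,\ldots,m-1\}$. The formula for $u_1$ holds because the translates $S+kx$, $0\le k<m$, lie in distinct $G_{i_0}$-cosets ($m<q$), as in Lemma~\ref{lem:lc-interval-contraction}.

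\textbf{Inductive step.} Suppose $T_{s-1}\subseteq G_{i_{s-1}}$ has been built with $(S,x)$-interval sections over $G_{i_1}$ and $u_{s-1}<h$.
\begin{itemize}
\item Choose a level $i_s>i_{s-1}$ with $[G_{i_s}:G_{i_{s-1}}]\ge 100$. This is possible because $|G_i|\to\infty$.
\item Let $D_s$ be the set given by Lemma~\ref{lem:lc-two-translate-extension} for the pair $i_{s-1}<i_s$ with the same $x$. This is allowed since $0\ne x\in G_{i_1}\subseteq G_{i_{s-1}}$, and $D_s$ is $G_{i_{s-1}}$-separated by that lemma.
\item Apply Lemma~\ref{lem:lc-one-step-expansion} with $(i_0,i,j)$ replaced by $(i_1,i_{s-1},i_s)$. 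Then $T_s$ again has $(S,x)$-interval sections over $G_{i_1}$, so the induction continues.
\end{itemize}
The lemma also gives, for $i_{s-1}\le\ell\le i_s$,
\[
u_{s-1}\le \frac{|T_s\cap G_\ell|}{|G_\ell|}\le u_{s-1}+\frac{|S|}{|G_{i_1}|},
\qquad
\frac{|S|}{|G_{i_1}|}=\frac{1}{q}\,\frac{|S|}{|G_{i_0}|}\le \frac1q .
\]
Its terminal estimate, with $1-4/\rho\ge 1/2$, gives
\[
u_s\ \ge\ u_{s-1}+\frac{1-u_{s-1}}{2(|G_{i_1}|-1)}\ \ge\ u_{s-1}+\frac{1-h}{2(|G_{i_1}|-1)} .
\]

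\textbf{Termination and first crossing.} The increment above is bounded below by a positive constant independent of $s$, because $G_{i_1}$ is fixed and $h<1$. Hence after finitely many steps some $u_s\ge h$. Let $N$ be the first such index; $N\ge 2$ because $u_1<h$. Then $u_{N-1}<h\le u_N\le u_{N-1}+1/q<h+1/q$, which is \eqref{eq:lc-expansion-first-crossing}.

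\textbf{Intermediate levels.} For $2\le s\le N$ we have $u_1\le u_{s-1}$, since the sequence $(u_s)$ is nondecreasing, and $u_{s-1}<h$ by minimality of $N$. The two-sided bound above then gives $u_1\le |T_s\cap G_\ell|/|G_\ell|<h+1/q$ for every $i_{s-1}\le\ell\le i_s$, which is the second estimate.

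The main point needing care is the termination step. The gain must not decay with $s$, and this is exactly why the interval-section structure is measured over the fixed subgroup $G_{i_1}$ throughout. That same choice makes the overshoot at most $|S|/|G_{i_1}|\le 1/q$.
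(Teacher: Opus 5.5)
Your proposal is correct and follows the paper's proof essentially step for step: you iterate Lemma~\ref{lem:lc-one-step-expansion} with the fixed base subgroup $G_{i_1}$, stop at the first index where the density reaches $h$, and bound both the overshoot and the intermediate levels by $|S|/|G_{i_1}|\le 1/q$. The only difference is cosmetic: you get termination from a uniform additive gain of $\frac{1-h}{2(|G_{i_1}|-1)}$ per step while $u_{s-1}<h$, whereas the paper shows $1-u_s\le(1-c)(1-u_{s-1})$ and hence $u_s\to1$; both arguments work.
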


\begin{proof}
Since $G_{i_0}=\langle qx\rangle$ and $m<q$, the sets $S$, $S+x$, $\ldots$, $S+(m-1)x$ lie in distinct $G_{i_0}$-cosets. Hence $|T_1|=m|S|$, which gives the stated formula for $u_1$.

For $s\geq 2$, choose filtration levels $i_1<i_2<i_3<\cdots$ such that $\rho_s:=[G_{i_s}:G_{i_{s-1}}]\geq100$, and let $D_s$ be the two-translate extension set from Lemma~\ref{lem:lc-two-translate-extension}, constructed with the same element $x$, and define $T_s$ and $u_s$ as in the statement.  The set $T_1$ has $(S,x)$-interval sections over $G_{i_1}$. At step $s$, apply Lemma~\ref{lem:lc-one-step-expansion} with base subgroup $G_{i_1}$, current subgroup $G_{i_{s-1}}$, and terminal subgroup $G_{i_s}$. It follows inductively that every $T_s$ has
$(S,x)$-interval sections over $G_{i_1}$.

The estimate \eqref{eq_cyclic_quantitive_increasing} in Lemma~\ref{lem:lc-one-step-expansion} gives
\[
u_s
\geq
u_{s-1}
+
\left(1-\frac{4}{\rho_s}\right)
\frac{1-u_{s-1}}{|G_{i_1}|-1}.
\]
Since $\rho_s\geq100$, the above inequality implies that $1-u_s \leq (1-c)(1-u_{s-1})$, where $c:=(24/25)(|G_{i_1}|-1)^{-1}$. The condition $1\leq m<q$ implies $|G_{i_1}|\geq2$, so $0<c<1$. Iterating the preceding inequality gives
\[
1-u_s
\leq
(1-c)^{s-1}(1-u_1)
\longrightarrow0.
\]
Thus $u_s\longrightarrow1$.

Let $N\geq2$ be the least index for which $u_N\geq h$. Then $u_{N-1}<h$. Applying the upper bound in Lemma~\ref{lem:lc-one-step-expansion} at the terminal level gives
\[
h
\leq
u_N
\leq
u_{N-1}+\frac{|S|}{|G_{i_1}|}
<
h+\frac{|S|}{|G_{i_1}|}.
\]
Since $S\subseteq G_{i_0}$ and $|G_{i_1}|=q|G_{i_0}|$, we have $\frac{|S|}{|G_{i_1}|}\leq \frac1q$. This proves \eqref{eq:lc-expansion-first-crossing}.

Finally, let $2\leq s\leq N$ and $i_{s-1}\leq\ell\leq i_s$. The intermediate-level estimate in Lemma~\ref{lem:lc-one-step-expansion} gives
\[
u_{s-1}
\leq
\frac{|T_s\cap G_\ell|}{|G_\ell|}
\leq
u_{s-1}+\frac{|S|}{|G_{i_1}|}.
\]
The sequence $(u_s)$ is nondecreasing, while the minimality of $N$ gives $u_{s-1}<h$. Therefore $u_1\leq u_{s-1}<h$. Substitution into the preceding estimate yields
\[
u_1
\leq
\frac{|T_s\cap G_\ell|}{|G_\ell|}
<
h+\frac{|S|}{|G_{i_1}|}
\leq
h+\frac1q.
\]
\end{proof}

\subsection{Construction of the density sequence}

We construct the set in cycles. Each cycle first lowers the density close to $\alpha$ by an interval contraction and then raises it to a target value close to $\beta$ by finitely many two-translate extensions. The last set and filtration level of one cycle are used to begin the next.

\begin{proof}[Proof of Proposition~\ref{prop:cyclic-cases}]
If $\beta=0$, then $\alpha=0$. The conclusion follows by taking $A=\varnothing$ with $\PP(A)=\{0\}$. If $\alpha=1$, then $\beta=1$. The conclusion follows by taking $A=G$ with $\PP(A)=G$. In the following, we assume that $0<\beta\leq1$ and $0\leq\alpha<1$.

Choose a sequence $(h_k)_{k\geq1}$ such that $\alpha<h_k<1$ and $h_k\longrightarrow\beta$. Choose a filtration level $i_{1,0}$ and put
\[
A_{1,0}:=G_{i_{1,0}},
\qquad
x_0:=1.
\]
We now construct the cycles inductively and select sets $A_{k,r}$. Here $k$ indexes the cycle, and \(r\) the stage within it.

Suppose $A_{k,0}\subseteq G_{i_{k,0}}$ has been chosen. Put
\[
S_k:=\PP(A_{k,0}),
\qquad
x_{k-1}:=\frac{|S_k|}{|G_{i_{k,0}}|}.
\]
For $k=1$, one has $x_0=1$. For $k\geq2$, the preceding construction will give $x_{k-1}\geq h_{k-1}>\alpha$. Thus one always has $x_{k-1}>\alpha$.

The indices $[G_j:G_{i_{k,0}}]$ are unbounded as $j\to\infty$. We may therefore choose $i_{k,1}>i_{k,0}$ and an integer $m_k$ such
that, writing $q_k:= [G_{i_{k,1}}:G_{i_{k,0}}]$, one has
\[
\frac{11}{q_k}
<
\min\left\{\frac1k,h_k-\alpha\right\},
\qquad
11\leq m_k<q_k,\qquad \left|
x_{k-1}\frac{m_k}{q_k}-\alpha
\right|
\leq
\frac{11}{q_k}.
\]
Indeed, when $\alpha>0$, one may take $m_k= \left\lfloor \alpha q_k/x_{k-1}\right\rfloor$ for sufficiently large $q_k$. Since $0<\alpha/x_{k-1}<1$, one then has $11\leq m_k<q_k$, and the approximation error is less than $1/q_k$. When $\alpha=0$, take $m_k=11$.

Let $\xi_k$ be a generator of $G_{i_{k,1}}$. Apply Lemma~\ref{lem:lc-interval-contraction} to $A_{k,0}\subseteq G_{i_{k,0}}$, with terminal subgroup $G_{i_{k,1}}$, generator $\xi_k$, and integer $m_k$. Let $E_{k,1}\subseteq G_{i_{k,1}}\setminus G_{i_{k,0}}$ be the resulting $G_{i_{k,0}}$-separated set, and put
\[
A_{k,1}:=A_{k,0}\cup E_{k,1}.
\]
Then
\[
\PP(A_{k,1})
=
S_k+\{0,\xi_k,\ldots,(m_k-1)\xi_k\}.
\]
Its density at $G_{i_{k,1}}$ is $z_k := |\PP(A_{k,1})|/|G_{i_{k,1}}| = x_{k-1}m_k/q_k$. Therefore $|z_k-\alpha| \leq 11/q_k$. Moreover, we have $z_k \leq \alpha+11/q_k < h_k$, because \(11/q_k<h_k-\alpha\). The contraction estimate \eqref{eq:lc-contraction-band} also gives $z_k
\leq |\PP(A_{k,1})\cap G_\ell|/|G_\ell|\leq x_{k-1}$ for every $i_{k,0}\leq\ell\leq i_{k,1}$.

We next apply Lemma~\ref{lem:lc-expansion-to-one} with $i_0=i_{k,0}$, $i_1=i_{k,1}$, $S=S_k$, $x=\xi_k$, $m=m_k$ and $h=h_k$. It provides filtration levels $i_{k,1}<i_{k,2}<\cdots<i_{k,N_k}$ and separated two-translate extension sets $E_{k,r}\subseteq G_{i_{k,r}}\setminus G_{i_{k,r-1}}$ $(2\leq r\leq N_k)$. Define
\[
A_{k,r}:=A_{k,r-1}\cup E_{k,r}
\quad
(2\leq r\leq N_k),\qquad
u_{k,r}
:=
\frac{|\PP(A_{k,r})|}{|G_{i_{k,r}}|}
\quad
(1\leq r\leq N_k).
\]
Since $A_{k,r-1}\subseteq G_{i_{k,r-1}}$ and $E_{k,r}\subseteq G_{i_{k,r}}\setminus G_{i_{k,r-1}}$, the two sets are disjoint. Hence $\PP(A_{k,r}) = \PP(A_{k,r-1})+\PP(E_{k,r})$. Thus these subset-sum sets agree with the recursively defined sets in Lemma~\ref{lem:lc-expansion-to-one}. In particular, $u_{k,1}=z_k$.

Put $x_k:=u_{k,N_k}$. The conclusions of Lemma~\ref{lem:lc-expansion-to-one} give $h_k\leq x_k< h_k+1/q_k$, and
\[
z_k
\leq
\frac{|\PP(A_{k,r})\cap G_\ell|}{|G_\ell|}
<
h_k+\frac1{q_k}
\]
whenever $2\leq r\leq N_k$ and $i_{k,r-1}\leq\ell\leq i_{k,r}$. This completes the $k$-th cycle.

Set $i_{k+1,0}:=i_{k,N_k}$ and $A_{k+1,0}:=A_{k,N_k}$. Since $x_k\geq h_k>\alpha$, the construction may be repeated. Finally, define
\[
A:=\bigcup_{k\geq1}A_{k,N_k}.
\]

All sets added after the initial stage are separated from the preceding selected subgroup. Hence Lemma~\ref{lem:lc-separated-stage-stability} shows that the densities computed during the construction are unchanged by all later stages. They are therefore the corresponding densities of the final set $A$.

Since $11/q_k<1/k$, we have \(q_k>11k\), and therefore \(q_k\to\infty\). Consequently,
\[
z_k\longrightarrow\alpha,
\qquad
x_k\longrightarrow\beta.
\]
Thus $\alpha$ and $\beta$ are subsequential limits of the density sequence. Every filtration level in cycle $k$ has density between $z_k$ and $\max\left\{x_{k-1},h_k+1/q_k\right\}$. The lower and upper bounds tend to $\alpha$ and $\beta$, respectively. It follows that
\[
\liminf_{i\to\infty}d_i^{G_\bullet}(\PP(A))=\alpha,
\qquad
\limsup_{i\to\infty}d_i^{G_\bullet}(\PP(A))=\beta,
\]
and hence $\mathcal L_{G_\bullet}(A)\subseteq[\alpha,\beta]$.

It remains to prove the reverse inclusion. If \(\alpha=\beta\), the proof has already been finished. Otherwise, consider any \(\gamma\in(\alpha,\beta)\). For all sufficiently large $k$, one has $z_k<\gamma<h_k$. Since $u_{k,1}=z_k$ and $u_{k,N_k}=x_k\geq h_k$, there is a least $r_k\in\{2,\ldots,N_k\}$ such that $u_{k,r_k}\geq \gamma$. Then $u_{k,r_k-1}<\gamma$. Applying the upper estimate \eqref{eq_cyclic_twosidedensity} in Lemma~\ref{lem:lc-one-step-expansion} to this extension gives
\[
\gamma
\leq
u_{k,r_k}
<
\gamma+\frac{|S_k|}{|G_{i_{k,1}}|}
=
\gamma+\frac{x_{k-1}}{q_k}
\leq
\gamma+\frac1{q_k}.
\]
Hence $u_{k,r_k}\longrightarrow \gamma$ as $k\rightarrow \infty$. The corresponding filtration levels tend to infinity, so $\gamma\in\mathcal L_{G_\bullet}(A)$. Therefore,
\[
\mathcal L_{G_\bullet}(A)=[\alpha,\beta].
\]
This completes the proof.
\end{proof}

\section{Subset sums in coordinate groups} \label{sec:coordinate-cases}

The purpose of this section is to prove the following proposition.

\begin{proposition}\label{prop:coordinate-cases}
Let $G$ be a group of type \textup{(\romannumeral1)} or
\textup{(\romannumeral2)}, with a given filtration $G_\bullet$ as in \eqref{eq:def-filtration}.  For every $0\leq\alpha\leq\beta\leq1$, there exists \(A\subseteq G\) such that $\mathcal L_{G_\bullet}(A)=[\alpha,\beta]$.
\end{proposition}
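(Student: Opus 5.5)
We follow the strategy announced in the outline. Write $\mathcal R=\F_p$ in type (\romannumeral2) and $\mathcal R=\Z/4\Z$ in type (\romannumeral1). The first step is to choose a basis $e_1,e_2,\ldots$ of $G\cong\bigoplus_{n\geq1}\mathcal R$ that is adapted to the fixed filtration. Over $\F_p$, every $G_i$ is a subspace, and we extend a basis of $G_i$ to a basis of $G_{i+1}$. Over $\Z/4\Z$, the subgroups $G_i$ need not be free summands. We therefore use the structure of finite subgroups of $\bigoplus\Z/4\Z$ to choose a basis $(e_n)$ for which each $G_i$ has the form
\[
\bigoplus_{n\in F_i}\mathcal Re_n\oplus\bigoplus_{n\in F_i'}2\mathcal Re_n,
\]
where the index sets $F_i\subseteq F_{i+1}$ and $F_i\cup F_i'$ are nested. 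This records that $2e_n$ may enter the filtration before $e_n$, and it is the technical point we expect to require the most care. With such a basis, any set supported on a coordinate summand $U=\bigoplus_{n\in J}\mathcal Re_n$ meets $G_i$ in a sub-box, so densities can be computed coordinatewise.

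The second step builds local blocks. For a finite summand $U\cong\mathcal R^m$, we construct a finite $B\subseteq U$ and an element $v\in U$ with $\PP(B)=U\setminus\{v\}$. Over $\F_p$, we place $m-1$ coordinates with full $\PP$: since $p\cdot\{\text{nonzero}\}$ sums cover $\F_p$, we use, for instance, $\{e,2e,\ldots\}$ together with Lemma~\ref{lem_basic2}. The last coordinate uses Corollary~\ref{cor_missing_one}, and this produces $v=ae_m$. Over $\Z/4\Z$ we do the analogous thing by hand. We choose $v$ so that $v\in G_i$ exactly when $U\subseteq G_i$. This gives $d_i(\PP(B))$ equal to $1$ or to $1-|U|^{-1}$. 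Auxiliary coordinates $f_1,\ldots,f_r$ attached to $v$ are then added so that $\PP$ of the enlarged set misses only $v+\langle f_1,\ldots,f_r\rangle$-type fibres with proportion $p^{-(m+r)}$ (respectively $4^{-m}2^{-r}$) once they appear in the filtration. We verify this by an explicit finite computation. Because distinct blocks live on disjoint sets of basis coordinates, $\PP$ of a union is the direct product, and the density at level $i$ is $\prod_{\text{blocks}}(1-\mu_b(i))$. Here $\mu_b(i)$ is the current missing proportion of block $b$, and only finitely many blocks are nontrivial at level $i$. Blocks built from basis vectors beyond $G_i$ contribute the factor $1$ at level $i$, which gives stability for free.

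The third step runs the cycle construction with target sequences, as in Proposition~\ref{prop:cyclic-cases}. The trivial cases $\beta=0$ and $\alpha=1$ are handled by $A=\varnothing$ and $A=G$. To decrease, we add many new small co-singleton blocks using fresh coordinates, each placed at successive filtration levels. Each such block multiplies the density by a factor $1-|U|^{-1}\in[1-p^{-1},1)$ or larger when $|U|$ is large. Using large blocks makes each step small, so the density descends through $[\alpha,\text{current}]$ in steps that tend to zero, until it lies within $\varepsilon_k$ of $\alpha$. Since every block has positive missing proportion, we can go arbitrarily close to $0$, which also covers $\alpha=0$.

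The fourth step increases the density, and this is the main obstacle. We add auxiliary coordinates one at a time, always to a block with the largest current missing proportion $\mu$. Each addition replaces $\mu$ by $\mu/p$ or $\mu/2$. Since only finitely many blocks exist, the product tends to $1$. We stop at the first crossing of $h_k\to\beta$. The overshoot is at most $\mu_{\max}$ times the product of the other factors, and we need this overshoot to tend to $0$. We ensure this by using, in cycle $k$, only blocks whose missing proportions are at most $\varepsilon_k$: large $m$ makes $|U|^{-1}$ small. Choosing $\varepsilon_k$ small then also makes the decreasing steps small.

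Intermediate levels between two chosen levels lie between the densities at those levels, by monotonicity of the factors along the filtration. Hence the $\liminf$ equals $\alpha$, the $\limsup$ equals $\beta$, and every $\gamma\in(\alpha,\beta)$ is approached within $\varepsilon_k$ during the increasing phase. Combined with stability, this gives $\mathcal L_{G_\bullet}(A)=[\alpha,\beta]$.
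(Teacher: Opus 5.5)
Your overall architecture (adapted basis, co-singleton blocks, auxiliary coordinates, exact product formula, cycles of descent and maximal-choice ascent) is the paper's. However, the block you describe over $\F_p$ lacks the property everything else rests on. Suppose $m-1$ coordinates carry sets with full subset sums and the last carries $B_m$ with $\PP(B_m)=\langle e_m\rangle\setminus\{ae_m\}$. Since these pieces live on independent coordinates, $\PP(B)$ is the product set $\{x\in U:x_m\neq a\}$. It misses the whole coset $ae_m+\bigoplus_{j<m}\langle e_j\rangle$, which is a proportion $1/p$ of $U$ for every $m$. In fact no product set in $\mathcal R^m$ with $m\geq2$ has a one-point complement. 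So $d_i(\PP(B))\in\{1,1-|U|^{-1}\}$ fails. Every block in your descent multiplies the density by $1-1/p$ however large $U$ is, so the descent has no small steps and cannot be stopped within $\varepsilon_k$ of $\alpha$.

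The missing idea is a coupling between coordinates. The paper inducts on $m$. Given $D\subseteq W=\bigoplus_{j\geq2}\langle e_j\rangle$ with $\PP(D)=W\setminus\{v\}$, it adjoins $T=\{e_1+te_2:t\in\F_p\}$. By Lemma~\ref{lem_basic2}, the sums of $s$ elements of $T$ fill the line $se_1+\langle e_2\rangle$ for $1\leq s\leq p-1$. The fibre over $0\cdot e_1$ receives only the empty and the full sum, both equal to $0$. Hence $\PP(T)+(W\setminus\{v\})=U\setminus\{v\}$. The same coupling, $D(\mathfrak q,v)=\{\mathfrak q+tv:t\in\F_p\}$, lets an auxiliary coordinate divide the missing proportion by $p$ while keeping $\PP(D(\mathfrak q,v))\cap U=\{0\}$. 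Over $\Z/4\Z$, your ``by hand'' construction must also produce a $v$ compatible with the placement. The paper's block $\{e_m,-e_m\}\cup\bigcup_{j<m}\{e_j,e_j+e_{j+1},2e_j+e_{j+1}\}$ misses $v=2e_1+3e_2+\cdots+3e_m$. Putting first the coordinate that enters the filtration in full earliest, the odd coefficients force $U\subseteq G_i$ whenever $v\in G_i$.

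Your control of the ascent overshoot is also incomplete. The overshoot at the first crossing is bounded by the largest current missing proportion over \emph{all} blocks, including those from earlier cycles. Choosing the new blocks of cycle $k$ with proportion at most $\varepsilon_k$ does not bound those older ones. Restricting the ascent to small blocks instead would cap the density by the product of the untouched factors, which can stay below $h_k$. You need the paper's argument that this maximum tends to $0$. Fix $\eta>0$. Every cycle performs at least one ascent step, because the descent ends below $h_k$. The maximal-choice rule divides a proportion $\geq\eta$ by $p$ (or $2$) whenever one exists. Only the finitely many blocks created before $\varepsilon_k<\eta$ can ever have proportion $\geq\eta$. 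Your claim that each $\gamma\in(\alpha,\beta)$ is approached within $\varepsilon_k$ during the ascent rests on the same fact. The paper uses the descent instead, whose steps are below the cycle's tolerance by construction.

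Finally, two ingredients are asserted rather than proved. The first is the $\Z/4\Z$ adapted basis. The paper takes an $\F_2$-basis of $V=2G$ adapted simultaneously to the flags $G_i\cap V$ and $2G_i$. It builds this from complements $E_{r,s}$ of $W_{r-1,s}+W_{r,s-1}$ in $W_{r,s}=(G_r\cap V)\cap 2G_s$, then lifts through doubling. The second is the product formula. It also requires putting every unused coordinate into $A$ in full, e.g.\ as $\langle e_\lambda\rangle\setminus\{0\}$. Otherwise each such coordinate meeting $G_i$ divides the density at level $i$ by $|G_i\cap\langle e_\lambda\rangle|$.
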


As in the preceding section, we may delete repeated terms and assume that $G_i<G_{i+1}$ for every $i\geq1$. Throughout the section, we write
\[
\mathcal R:=\Z/4\Z\quad\text{for type \textup{(\romannumeral1)}},
\qquad
\mathcal R:=\F_p\quad\text{for type \textup{(\romannumeral2)}},
\]
where \(p\) is a given odd prime, and $G=\bigoplus_{n=1}^{\infty}\mathcal R$. Thus \(G\) is a free \(\mathcal R\)-module. We emphasize that, when
\(\mathcal R=\Z/4\Z\), a subgroup \(G_i\leq G\) need not be a free \(\mathcal R\)-module. The submodule generated by an element $a\in G$ is denoted by $\langle a \rangle=\mathcal Ra$.

To state the two constructions simultaneously, put
\[
q_\mathcal R:=|\mathcal R|,
 \qquad
 \gamma_\mathcal R:=
 \begin{cases}
  p,&\mathcal R=\F_p,\\
  2,&\mathcal R=\Z/4\Z.
 \end{cases}
\]

\subsection{A coordinate representation adapted to the filtration}

The first lemma provides one basis that is adapted to every subgroup in the given filtration.

\begin{lemma}
\label{prop:coordinate-normal-form}
Let $G$ be a group of type (\romannumeral1) or (\romannumeral2), with a given filtration $G_\bullet$ as in \eqref{eq:def-filtration}. There exist an \(\mathcal R\)-basis \((\mathfrak e_\lambda)_{\lambda\in\N}\) of \(G\) and functions $c,d:\N\longrightarrow\N$ with $1\leq c\leq d$ such that
\begin{equation}\label{eq:unified-filtration-normal-form}
G_i= \bigoplus_{d(\lambda)\leq i}\langle \mathfrak e_\lambda\rangle  \ \oplus\!  \bigoplus_{c(\lambda)\leq i<d(\lambda)} \langle2\mathfrak e_\lambda\rangle, \qquad(i\geq1).
\end{equation}
\end{lemma}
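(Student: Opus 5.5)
We build the basis inductively along the filtration, extending a basis adapted to $G_1,\dots,G_{i-1}$ to one adapted to $G_i$. In the case $\mathcal R=\F_p$ this is ordinary linear algebra. Since $G_1\le G_2\le\cdots$ are finite-dimensional subspaces exhausting $G$, we extend a basis of $G_{i-1}$ to a basis of $G_i$ at each step. Every new vector $\mathfrak e_\lambda$ gets $c(\lambda)=d(\lambda)=i$, and the second sum in \eqref{eq:unified-filtration-normal-form} is empty. The union is a basis of $G=\bigcup G_i$, and \eqref{eq:unified-filtration-normal-form} holds by construction.

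For $\mathcal R=\Z/4\Z$ we carry a stronger inductive invariant. After stage $i$ we have a finite set $\Lambda_i$ of elements $\mathfrak e_\lambda\in G$ and integers $c(\lambda)\le i$, with $d(\lambda)\le i$ recorded for some of them, and with three properties:
\begin{itemize}
\item the family $(\mathfrak e_\lambda)_{\lambda\in\Lambda_i}$ is $\mathcal R$-independent and generates a \emph{pure} (equivalently, direct-summand) free submodule $F_i$ of $G$;
\item $G_i$ equals the right-hand side of \eqref{eq:unified-filtration-normal-form} restricted to $\Lambda_i$;
\item the elements still ``half-entered'' (those with $c(\lambda)\le i$ but no $d(\lambda)$ yet) have $\mathfrak e_\lambda\notin G_i$.
\end{itemize}
Purity is the key point. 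In $G=\bigoplus\Z/4\Z$, a free submodule $F$ is a direct summand iff $F\cap 2G=2F$. Any independent family spanning such a summand extends to a basis of $G$, because $G/F$ is then free, being a countable direct summand of a free $\Z/4\Z$-module, hence projective and so free.

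For the inductive step we pass from $G_{i-1}$ to $G_i$. Let $F:=F_{i-1}$ and write $G=F\oplus F'$ with $F'$ free. The idea is to understand $G_i$ modulo what is already accounted for. We use the finite group $G_i\cap F$ and the projection of $G_i$ to $F'$. First, the half-entered $\mathfrak e_\lambda$ that now lie in $G_i$ (after correction) get $d(\lambda)=i$. Then the projection $M$ of $G_i$ to $F'$ is a finite subgroup of a free $\Z/4\Z$-module. We choose a basis of $F'$ adapted to $M$, in the form $M=\bigoplus\langle f_\mu\rangle\oplus\bigoplus\langle 2f_\mu\rangle$. This is possible because finite subgroups of $\bigoplus\Z/4\Z$ admit such adapted bases: by the Smith normal form for $\Z/4\Z$ (elementary divisors $1,2$) applied inside a finitely generated free summand containing $M$. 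Elements $f_\mu$ with $f_\mu\in M$ get $c=d=i$, and those with only $2f_\mu\in M$ get $c=i$ with $d$ pending.

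Finally we lift. The elements of $G_i$ projecting onto $f_\mu$ (or $2f_\mu$) differ from them by elements of $F$, and we must correct the $f_\mu$ by elements of $F$ so that the lifts themselves lie in $G_i$. Doing this changes neither independence nor purity, since it is a triangular change of basis of $G$.

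The main obstacle is this lifting and correction step, in two places. The first is a half-entered $\mathfrak e_\lambda$, where $2\mathfrak e_\lambda\in G_{i-1}$ but $\mathfrak e_\lambda$ itself does not lie in $G_i$ while some $\mathfrak e_\lambda+w$ does. The second is a new $2f_\mu\in M$ whose lift in $G_i$ is $2f_\mu+w$ with $w\in F$ not divisible by $2$ in a compatible way. We would handle both by replacing $\mathfrak e_\lambda$ with $\mathfrak e_\lambda+w$, which requires $2w\in G_{i-1}$ and leaves $2\mathfrak e_\lambda$ unchanged. This in turn means redoing the choice of basis for $G_i$ jointly: apply the Smith-type normal form to $G_i$ inside the finitely generated free summand $F\oplus\langle f_\mu\rangle$, relative to the already fixed submodule $G_{i-1}$, and check that the normal form can be chosen to extend the one for $G_{i-1}$. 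We expect this to be the delicate verification; we would carry it out by induction on $|G_i/G_{i-1}|$, adding one generator at a time.
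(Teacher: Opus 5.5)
Your $\F_p$ case is correct and is the paper's argument. For $\mathcal R=\Z/4\Z$, however, your proposal stops exactly where the content lies. You defer, as ``the delicate verification'', how to correct provisional basis elements at level $i$ without destroying \eqref{eq:unified-filtration-normal-form} at earlier levels. The correction rule you do state is also not right. Replacing a half-entered $\mathfrak e_\lambda$ by $\mathfrak e_\lambda+w$ changes $2\mathfrak e_\lambda$ unless $2w=0$. The levels $c(\lambda)\le k<i$ can then survive only if at least $2w\in G_{c(\lambda)}$, so your condition $2w\in G_{i-1}$ is too weak.

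Here is a counterexample, writing $e_1,e_2,\dots$ for a basis of $G$. Take $G_1=\langle 2e_1\rangle$, $G_2=\langle 2e_1,2e_2\rangle$, $G_3=G_2+\langle e_1+e_2\rangle$, with provisional elements $e_1$ ($c=1$) and $e_2$ ($c=2$). Your rule allows $e_1\mapsto e_1+e_2$, since $2e_2\in G_2$, after which $\langle 2(e_1+e_2)\rangle\neq G_1$. The correct move is $e_2\mapsto e_2+e_1$, a change that is triangular with respect to $c$.

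Similarly, the recipe that $2f_\mu\in M$ forces $c(\mu)=i$, with $M$ the projection along $F_{i-1}$, can invent half-entered coordinates. Take $G_1=\langle 2e_1\rangle$, $G_2=\langle e_1+2e_2\rangle$, $F_1=\langle e_1\rangle$ and $F'=\langle e_2,e_3,\dots\rangle$. Then $M=\langle 2e_2\rangle$, yet no element $2(e_2+w)$ with $w\in\langle e_1\rangle$ lies in $G_2$. In fact nothing new enters at level $2$; instead $e_1$ must be replaced by $e_1+2e_2$ with $d=2$.

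The missing idea is that all of this bookkeeping is governed by two filtrations of the $\F_2$-space $V=2G=\{x:2x=0\}$, namely $C_i=G_i\cap V$ and $D_i=2G_i\subseteq C_i$. Two chains of subspaces always admit a common adapted basis. Indeed, $W_{r,s}:=C_r\cap D_s$ satisfies $W_{r-1,s}\cap W_{r,s-1}=W_{r-1,s-1}$, so complements $E_{r,s}$ of $W_{r-1,s}+W_{r,s-1}$ in $W_{r,s}$ assemble into such a basis.

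The paper works globally with this. It takes an adapted basis $(\mathfrak u_\lambda)$ and lets $c(\lambda)$, $d(\lambda)$ be the entry levels of $\mathfrak u_\lambda$ into $(C_i)$ and $(D_i)$. It then lifts $\mathfrak u_\lambda=2\mathfrak e_\lambda$ with $\mathfrak e_\lambda\in G_{d(\lambda)}$. Because each lift is chosen only after $d(\lambda)$ is known, no corrections, purity invariants, Smith normal forms or limiting arguments are needed. The basis property and \eqref{eq:unified-filtration-normal-form} follow by writing $2x\in D_i$ in the $\mathfrak u_\lambda$ with $d(\lambda)\le i$ and observing that $x-\sum\varepsilon_\lambda\mathfrak e_\lambda\in C_i$.

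Your inductive scheme could in principle be repaired by performing exactly this two-flag echelon reduction at every level. You would then also need to check that each $\mathfrak e_\lambda$ is altered only at its entry level, so that the limiting family is a basis of $G$. That argument is not in the proposal.
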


\begin{proof} [Proof of Lemma \ref{prop:coordinate-normal-form}]
First, we consider the case $\mathcal R=\Z/4\Z$. Put $V:=\{x\in G:\,2x=0\}=2G$. Since \(G\) is a free \(\Z/4\Z\)-module, \(V\) is a countably infinite-dimensional vector space over \(\F_2\).  For every \(i\geq1\), put
\[
C_i:=G_i\cap V, \qquad D_i:=2G_i.
\]
Thus \(C_i\) is the subgroup of elements of \(G_i\) annihilated by \(2\), while \(D_i\) consists of the doubles of elements of \(G_i\). Then $(C_i)_{i\geq1}$ and $(D_i)_{i\geq1}$ are increasing filtrations of \(V\) by finite-dimensional \(\F_2\)-subspaces, with $D_i\subseteq  C_i$.

We next choose an \(\F_2\)-basis of \(V\) simultaneously adapted to the two filtrations \((C_i)\) and \((D_i)\).  Put
\[
 C_0=D_0=\{0\},
 \qquad
 W_{r,s}:=C_r\cap D_s
 \quad(r,s\geq 0).
\]
For every \(r,s\geq1\), choose an \(\F_2\)-subspace \(E_{r,s}\) of \(W_{r,s}\) such that $W_{r,s} = \bigl(W_{r-1,s}+W_{r,s-1}\bigr)\oplus E_{r,s}$. This is possible because all the spaces involved are finite dimensional.  Since $W_{r-1,s}\cap W_{r,s-1}=W_{r-1,s-1}$, induction on \(r+s\) gives
\[
W_{r,s} =\bigoplus_{\substack{1\leq a\leq r\\ 1\leq b\leq s}}E_{a,b}.
\]
Because both filtrations exhaust \(V\), we have $C_i=\bigcup_{s\geq1}W_{i,s}$ and $D_i=\bigcup_{r\geq1}W_{r,i}$. Consequently,
\[
C_i= \bigoplus_{a\leq i,\,b\geq1}E_{a,b},
 \qquad
 D_i=
 \bigoplus_{a\geq1,\,b\leq i}E_{a,b}.
\]

Choose a basis of each nonzero space \(E_{a,b}\), and enumerate the union of these bases as $(\mathfrak u_\lambda)_{\lambda\in\N}$. This is an \(\F_2\)-basis of \(V\) simultaneously adapted to \((C_i)\) and \((D_i)\).  Define
\[
 c(\lambda):=\min\{i:\mathfrak u_\lambda\in C_i\},
 \qquad
 d(\lambda):=\min\{i:\mathfrak u_\lambda\in D_i\}.
\]
It follows that
\begin{equation}\label{eq:two-adapted-flags}
 C_i=
 \operatorname{span}_{\F_2}
 \{\mathfrak u_\lambda:c(\lambda)\leq i\},
 \qquad
 D_i=
 \operatorname{span}_{\F_2}
 \{\mathfrak u_\lambda:d(\lambda)\leq i\}.
\end{equation}
Since \(D_i\subseteq C_i\) for every \(i\), we necessarily have $c(\lambda)\leq d(\lambda)$ $(\lambda\in\N)$. By the definition of \(d(\lambda)\), one has $\mathfrak u_\lambda\in D_{d(\lambda)}=2G_{d(\lambda)}$. We may therefore choose $\mathfrak e_\lambda\in G_{d(\lambda)}$ such that $2\mathfrak e_\lambda=\mathfrak u_\lambda$. We claim that \((\mathfrak e_\lambda)_{\lambda\in\N}\) is an \(\mathcal R\)-basis of \(G\).

First, suppose that $\sum_\lambda a_\lambda \mathfrak e_\lambda=0$ is a finite relation with \(a_\lambda\in \mathcal R\). Multiplication by \(2\) gives $\sum_\lambda \overline a_\lambda \mathfrak u_\lambda=0$ over $\F_2$, where \(\overline a_\lambda\) is the reduction of \(a_\lambda\) modulo \(2\). The linear independence of the \(\mathfrak u_\lambda\) implies that \(\overline{a}_\lambda=0\), and hence that every \(a_\lambda\in\{0,2\}\subseteq \mathcal R\). Write \(a_\lambda=2b_\lambda\), where \(b_\lambda\in\{0,1\} \subseteq  \mathcal R\). The original relation now becomes $\sum_\lambda b_\lambda \mathfrak u_\lambda=0$. A second application of the linear independence of the \(\mathfrak u_\lambda\) shows that every \(b_\lambda=0\).  Thus every \(a_\lambda=0\) in \(\mathcal R\), proving that the \(\mathfrak e_\lambda\) are \(\mathcal R\)-linearly independent.

Conversely, let \(x\in G\).  Since \(2x\in V\), there are finitely many coefficients \(\varepsilon_\lambda\in\F_2\) such that $2x=\sum_\lambda\varepsilon_\lambda \mathfrak u_\lambda$. Hence $2\big(x-\sum_\lambda\varepsilon_\lambda \mathfrak e_\lambda\big)=0$, where each  $\varepsilon_\lambda\in \{0,1\}$ is viewed as an element of $\mathcal R$. Therefore,
\[
x-\sum_\lambda\varepsilon_\lambda \mathfrak e_\lambda\in V
 =\operatorname{span}_{\F_2}\{\mathfrak u_\lambda:\lambda\in\N\}
 =\operatorname{span}_{\F_2}\{2\mathfrak e_\lambda:\lambda\in\N\}.
\]
It follows that \(x\) is an \(\mathcal R\)-linear combination of the \(\mathfrak e_\lambda\).  Thus \((\mathfrak e_\lambda)_{\lambda\in\N}\) is an \(\mathcal R\)-basis of \(G\).

It remains to identify the filtration subgroups in this basis.  Define
\[
 H_i:=
 \bigoplus_{d(\lambda)\leq i}\langle \mathfrak e_\lambda\rangle
 \ \oplus\
 \bigoplus_{c(\lambda)\leq i<d(\lambda)}
 \langle2\mathfrak e_\lambda\rangle.
\]
If \(d(\lambda)\leq i\), then $\mathfrak e_\lambda\in G_{d(\lambda)}\subseteq G_i$. If \(c(\lambda)\leq i<d(\lambda)\), then $2\mathfrak e_\lambda=\mathfrak u_\lambda\in C_i\subseteq G_i$. Therefore \(H_i\subseteq G_i\).

For the reverse inclusion, let \(x\in G_i\).  Since \(2x\in D_i\), it follows from \eqref{eq:two-adapted-flags} that $2x=\sum_{d(\lambda)\leq i}
 \varepsilon_\lambda \mathfrak u_\lambda$
for suitable \(\varepsilon_\lambda\in\F_2\).  Put $y:= x-\sum_{d(\lambda)\leq i}  \varepsilon_\lambda \mathfrak e_\lambda$, with $\varepsilon_\lambda$ viewed as an element from $\{0,1\}\subseteq \mathcal R$. Every \(\mathfrak e_\lambda\) occurring in this sum belongs to \(G_i\), so \(y\in G_i\).  Moreover, \(2y=0\) implies $y\in G_i\cap V=C_i$. Using \eqref{eq:two-adapted-flags} again, we may write
\[
 y=
 \sum_{c(\lambda)\leq i}
 \delta_\lambda \mathfrak u_\lambda
 =
 \sum_{c(\lambda)\leq i}
 2\delta_\lambda \mathfrak e_\lambda,
\]
where each \(\delta_\lambda\in\mathbb F_2\) is represented by an element of \(\{0,1\}\subseteq \mathcal R\) in the second sum. Consequently,
\[
 x\in
 \bigoplus_{d(\lambda)\leq i}\langle \mathfrak e_\lambda\rangle
 \ \oplus\
 \bigoplus_{c(\lambda)\leq i<d(\lambda)}
 \langle2\mathfrak e_\lambda\rangle
 =H_i.
\]
Thus \(G_i=H_i\), proving \eqref{eq:unified-filtration-normal-form}.

Second, we consider the case $\mathcal R=\F_p$. Starting with a basis of $G_1$, recursively extend a basis of $G_{i-1}$ to a basis of $G_i$. The union of the resulting bases is a basis of $G$. Enumerate it as $(\mathfrak e_\lambda)_{\lambda\in\N}$ and set
\[
c(\lambda)=d(\lambda):=\min\{i:\mathfrak e_\lambda\in G_i\}.
\]
Then \eqref{eq:unified-filtration-normal-form} holds.
\end{proof}

\begin{remark*}
The functions $c$ and $d$ record the first filtration levels at which $2\mathfrak e_\lambda$ and $\mathfrak e_\lambda$, respectively, belong to the filtration. More precisely,
\[
c(\lambda)=\min\{i:2\mathfrak e_\lambda\in G_i\},
\qquad
d(\lambda)=\min\{i:\mathfrak e_\lambda\in G_i\}.
\]
If \(\mathcal R=\F_p\), then $c=d$ and hence the second direct sum in \eqref{eq:unified-filtration-normal-form} is empty. If \(\mathcal R=\Z/4\Z\), a coordinate may first contribute its subgroup \(\langle2\mathfrak e_\lambda\rangle\) and may enter in full at a later filtration level.
\end{remark*}

We call $\mathfrak e_\lambda$ $(\lambda\in \N)$ basis elements, which may be denoted by other letters in later subsections. In a construction process, we call a basis element unused if it has not occurred in previous stages.

For a finite set $I\subseteq\N$, we call $U_I:=\bigoplus_{\lambda\in I}\langle \mathfrak e_\lambda\rangle$ a finite coordinate summand. Its \emph{dimension} is $|I|$. And we call $I$ the coordinate support of $U_I$.

\subsection{Co-singleton block}

The next lemma provides the basic density-decreasing construction. For convenience, we rewrite a finite coordinate summand $U=\bigoplus_{\lambda\in I}\langle \mathfrak e_\lambda \rangle$ as $U=\bigoplus_{j=1}^m \langle \mathfrak e_j \rangle$ temporarily.

\begin{lemma}
\label{lem:R-cosingleton}
Let \(m\geq 2\), and let $U=\bigoplus_{j=1}^m\langle \mathfrak e_j \rangle$. There is a finite set \(B\subseteq U\) and an element \(v\in U\) of additive order \(q_\mathcal R\) such that
\begin{equation}\label{eq:R-cosingleton}
 \PP(B)=U\setminus\{v\}.
\end{equation}
They may be chosen with the following additional properties: If \(\mathcal R=\F_p\), the \(\mathfrak e_m\)-coordinate of \(v\) is nonzero. If \(\mathcal R=\Z/4\Z\), then $v=2\mathfrak e_1+3\mathfrak e_2+\cdots+3\mathfrak e_m$.
\end{lemma}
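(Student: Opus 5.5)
The plan is to build $B$ by induction on the number of coordinates. At each step the set of subset sums will have the shape "everything except one point", and one new coordinate is added at a time. The basic mechanism is the additivity $\PP(B'\cup B'')=\PP(B')+\PP(B'')$ for disjoint $B',B''$. Write $U'=\bigoplus_{j<m}\langle\mathfrak e_j\rangle$ and suppose $B'\subseteq U'$ satisfies $\PP(B')=U'\setminus\{v'\}$. Let $B''$ be a set whose elements all have nonzero $\mathfrak e_m$-coordinate, and let $T_k\subseteq U'$ be the fibre of $\PP(B'')$ over $k\mathfrak e_m$, so that $\PP(B'')\cap(k\mathfrak e_m+U')=k\mathfrak e_m+T_k$. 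The key elementary fact is
\[
(U'\setminus\{v'\})+T=U' \quad\text{whenever } |T|\geq 2,
\qquad (U'\setminus\{v'\})+\{t\}=U'\setminus\{v'+t\}.
\]
So it suffices to design $B''$ so that every fibre $T_k$ is nonempty, all but exactly one have size at least two, and the exceptional one is a singleton $\{t\}$. Then $\PP(B'\cup B'')=U\setminus\{v\}$ with $v=k_0\mathfrak e_m+v'+t$. For the base of the induction in the case $\mathcal R=\F_p$, I use Corollary~\ref{cor_missing_one} on a single coordinate: $D\subseteq\F_p^*$ with $\PP(D)=\F_p\setminus\{a\}$ gives a one-dimensional co-singleton whose missing point $a\mathfrak e_1$ has nonzero coordinate.

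For the inductive step over $\F_p$, I imitate the two-translate construction of Lemma~\ref{lem:lc-two-translate-extension}. Fix $0\neq w\in U'$ and take
\[
B''=\{\mathfrak e_m,\ \mathfrak e_m+w\}\cup\{c\,\mathfrak e_m: c\in D\},
\]
where $D\subseteq\F_p^*$ is chosen (via Corollary~\ref{cor_missing_one} or a small variant) so that $\PP(D)$ misses exactly one residue $a$, and so that $1\notin D$, keeping the elements of $B''$ distinct. The fibre over $k$ then contains both $0$ and $w$ whenever $k-1\in\PP(D)$, because one can select either $\mathfrak e_m$ or $\mathfrak e_m+w$ together with a subset of $D$ summing to $k-1$. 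The fibre over $k=0$ contains $0$ from the empty sum. I then have to check that exactly one fibre, namely the one over $k_0=a+1$, is a singleton: every other fibre has size at least two, and that fibre is nonempty but reached only through a single $U'$-shift. The residues $k_0=0$ and $k_0=a+2$ are the places where the choice of $D$ must be adjusted. If $k_0\neq0$, the missing point $v$ automatically has nonzero $\mathfrak e_m$-coordinate, as the statement requires. Since $v$ is a nonzero element of $\F_p^m$, its order is $p=q_\mathcal R$.

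For $\mathcal R=\Z/4\Z$, the target $v=2\mathfrak e_1+3\mathfrak e_2+\cdots+3\mathfrak e_m$ suggests the same induction with base case $m=2$. In that case I would exhibit by hand a set $B\subseteq(\Z/4\Z)^2$ with $\PP(B)=U\setminus\{2\mathfrak e_1+3\mathfrak e_2\}$, checking all $16$ elements. In the inductive step the new coordinate contributes the summand $3\mathfrak e_m$ to the missing point, and $t=0$ in the exceptional fibre $k_0=3$. Concretely, I would take $B''=\{\mathfrak e_m,\ \mathfrak e_m+w,\ 2\mathfrak e_m\}$ or a similar short list. The requirement is that the fibres over $k=0,1,2$ have at least two elements each, while the fibre over $k=3$ is exactly $\{0\}$. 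The element $v$ has order $4$ because its $\mathfrak e_2$-coordinate is $3$.

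The main obstacle is the fibre bookkeeping in the inductive step. I must certify simultaneously that every fibre is nonempty, that only one is a singleton, and, in the $\Z/4\Z$ case, that the singleton sits exactly at $t=0$ over $k=3$. Wrap-around modulo $p$ or $4$ can accidentally enlarge the exceptional fibre, so I expect to tune the choice of $D$ and $w$ (and small primes such as $p=3$) by a short case check rather than a uniform formula.
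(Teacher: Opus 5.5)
\textbf{Gap in the $\F_p$ inductive step.} The block $B''=\{\mathfrak e_m,\mathfrak e_m+w\}\cup D\mathfrak e_m$ can never have exactly one singleton fibre. A subset of $B''$ may contain neither, one, or both of $\mathfrak e_m$ and $\mathfrak e_m+w$. So the fibre over $k$ contains $0$ exactly when $k\in X:=\PP(D)+\{0,1\}$, and contains $w$ exactly when $k\in X+1$. With $\PP(D)=\F_p\setminus\{a\}$ you get $X=\F_p$, so every fibre equals $\{0,w\}$ and $\PP(B'\cup B'')=U$. In particular, the fibre over $a+1$ contains $0$, since $a+1\in\PP(D)$. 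It also contains $w$, from $\mathfrak e_m+(\mathfrak e_m+w)$ together with a subset of $D$ summing to $a-1$.

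No adjustment of $D$ or $w$ helps. The singleton fibres are exactly those over $k\in X\triangle(X+1)$, and $|X\setminus(X+1)|=|(X+1)\setminus X|$, so their number is always even. (The two-translate extension of Lemma~\ref{lem:lc-two-translate-extension} tolerates several singleton fibres; here exactly one is needed.)

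The missing idea is the paper's line block $T=\{\mathfrak e_{\mathrm{new}}+tw:t\in\F_p\}$. By Lemma~\ref{lem_basic2}, the $s$-element subsets with $1\leq s\leq p-1$ fill the whole coset $s\mathfrak e_{\mathrm{new}}+\langle w\rangle$. The empty and the full subset both sum to $0$. So the unique singleton fibre is $\{0\}$ over $k=0$, and the missing point is unchanged. Because that exceptional fibre lies over $0$, appending the new coordinate as $\mathfrak e_m$ would make the $\mathfrak e_m$-coordinate of $v$ zero. The paper instead inserts the new coordinate in front, inducting on $\bigoplus_{j\geq2}\langle\mathfrak e_j\rangle$ with the base case on $\mathfrak e_m$. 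Then $v$ keeps the nonzero $\mathfrak e_m$-coordinate it received from Corollary~\ref{cor_missing_one}.

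\textbf{The $\Z/4\Z$ case.} Your concrete list $\{\mathfrak e_m,\mathfrak e_m+w,2\mathfrak e_m\}$ fails in the same way: already $\PP(\{\mathfrak e_m,2\mathfrak e_m\})$ meets every $k$, so all four fibres are $\{0,w\}$. The base case is also not exhibited. Here, though, your fibre requirement is attainable, so this half can be completed along your lines. For $B''=\{\mathfrak e_m+w,\,3\mathfrak e_m,\,2\mathfrak e_m-w\}$ with $2w\neq0$, the fibres over $k=0,1,2,3$ are $\{0,w\}$, $\{w,-w\}$, $\{0,-w\}$, $\{0\}$. Start from $\PP(\{\mathfrak e_1,-\mathfrak e_1\})=\langle\mathfrak e_1\rangle\setminus\{2\mathfrak e_1\}$ and take $w=\mathfrak e_1$ at every step; this gives exactly $v=2\mathfrak e_1+3\mathfrak e_2+\cdots+3\mathfrak e_m$.

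This end-insertion differs from the paper's front-insertion block $\{\mathfrak e_1,\mathfrak e_1+\mathfrak e_2,2\mathfrak e_1+\mathfrak e_2\}$. There the singleton fibre lies over $2\mathfrak e_1$ with shift $\mathfrak e_2$, which turns the old leading coefficient $2$ into $3$. Both routes produce the same $v$.
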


\begin{proof}
First, consider the case \(\mathcal R=\F_p\). In this situation, the conclusion actually holds for all $m\in \N$. We use induction on the dimension $m$. Corollary \ref{cor_missing_one} gives a set $D$ and an element $a\in \F_p$ such that $\PP(D)=\F_p\setminus\{a\}$. Taking $B=D\mathfrak e_1$ and $v=a\mathfrak e_1$ proves the case $m=1$.

Now we consider an $m$-dimensional coordinate summand  $U=\bigoplus_{j=1}^{m}\langle \mathfrak e_j \rangle$ $(m\geq 2)$, and use the inductive hypothesis on the $m-1$-dimensional coordinate summand $W:=\bigoplus_{j=2}^m \langle \mathfrak e_j \rangle$: there is a set \(D\subseteq W\) and a vector \(v\in W\), with nonzero \(\mathfrak e_m\)-coordinate, such that $\PP(D)=W\setminus\{v\}$. Define
\[
T:=\{\mathfrak e_1+t\mathfrak e_2:t\in\F_p\}, \qquad B:=D\cup T.
\]
Since $D$ and $T$ are disjoint, we have $\PP(B)=\PP(D)+\PP(T)$.

For \(1\leq s\leq p-1\), Lemma \ref{lem_basic2} shows that the subset sums of \(T\) obtained from \(s\) elements form precisely $s\mathfrak e_1+\langle \mathfrak e_2 \rangle$. Moreover, the empty subset and the full subset of $T$ both sum to zero. Thus,
\begin{equation}\label{eq:fp-line-block-sums}
 \PP(T)=\{0\}\cup
 \bigcup_{a\in\F_p^*}
 \bigl(a\mathfrak e_1+\langle \mathfrak e_2 \rangle\bigr).
\end{equation}
It follows that
\[
\PP(B) =  \big(W\setminus \{v\}\big)\cup \bigcup\limits_{a\in \F_p^*}\left(a\mathfrak e_1+\langle \mathfrak e_2 \rangle+(W\setminus \{v\})\right)= U\setminus \{v\}.
\]
This completes the induction. In particular, the element $v$ has additive order $p$ since it is non-zero.

Second, consider the case \(\mathcal R=\Z/4\Z\). For \(m\geq1\), define \begin{equation}\label{eq:c4-block-defining-set}
B_m:=\{\mathfrak e_m,-\mathfrak e_m\}
 \cup\bigcup_{j=1}^{m-1}
 \{\mathfrak e_j,\mathfrak e_j+\mathfrak e_{j+1},2\mathfrak e_j+\mathfrak e_{j+1}\}.
\end{equation}
For \(m=1\), one has $\PP(B_1)=  U\setminus\{2\mathfrak e_1\}$.

For $U=\bigoplus_{j=1}^m\langle \mathfrak e_j\rangle$ with $m\geq2$, apply the inductive hypothesis to the free $\mathcal R$-submodule
$W:=\bigoplus_{j=2}^m\langle \mathfrak e_j\rangle$. We obtain a set \(D\subseteq W\) and an element $v'$ satisfying $\PP(D)=W\setminus\{v'\}$, where
\[
D:=\{\mathfrak e_m,-\mathfrak e_m\} \cup\bigcup_{j=2}^{m-1}
 \{\mathfrak e_j,\mathfrak e_j+\mathfrak e_{j+1},2\mathfrak e_j+\mathfrak e_{j+1}\}, \qquad v'=2\mathfrak e_2+3\mathfrak e_3+\cdots+3\mathfrak e_m.
\]
Here, the expression of $v'$ after \(2\mathfrak e_2\) is absent when \(m=2\). Now we have the disjoint union $B_m =T \cup D$, with $T :=\{\mathfrak e_1,\mathfrak e_1+\mathfrak e_2,2\mathfrak e_1+\mathfrak e_2\}$, which gives $\PP(B_m)=\PP(T)+\PP(D)$.

In $\PP(T)$, for the first coordinates $0, 1, 2, 3$ of $\mathfrak e_1$, the corresponding $\mathfrak e_2$-components are, respectively,
\[
 \{0,2\mathfrak e_2\},\qquad
 \{0,\mathfrak e_2\},\qquad
 \{\mathfrak e_2\},\qquad
 \{\mathfrak e_2,2\mathfrak e_2\}.
\]
Note that the union of two distinct translates of
$W\setminus \{v'\}$ is $W$. Thus all $\mathfrak e_1$-fibres are contained in $\PP(B_m)$ except the fibre with $2\mathfrak e_1$. In this exceptional case, one has
\[
 (2\mathfrak e_1+\mathfrak e_2)+(W\setminus\{v'\})
 =(2\mathfrak e_1+W)\setminus\{v\},
\]
where $v=2\mathfrak e_1+3\mathfrak e_2+\ldots+3\mathfrak e_m$. This completes the induction. Moreover, one sees that $v$ has order $4$ when $m\geq 2$.
\end{proof}

We will call the above construction a co-singleton block. In particular, a co-singleton set in a coordinate summand of dimension
$m$ has relative density $1-q_\mathcal R^{-m}$.

\subsection{Auxiliary coordinates}

Let $B\subseteq U$ and $v\in U$ be as in Lemma~\ref{lem:R-cosingleton}, and let $\mathfrak q=\mathfrak e_\mu$ be an unused basis element whose index \(\mu\) does not belong to the coordinate support of $U$. Define
\[
D(\mathfrak q,v):=
\begin{cases}
\{\mathfrak q,\mathfrak q+v,\mathfrak q+2v\},&\mathcal R=\Z/4\Z,\\[1mm]
\{\mathfrak q+tv:t\in\F_p\},&\mathcal R=\F_p.
\end{cases}
\]

\begin{lemma}\label{prop:R-auxiliary}
Let $U$ be a finite coordinate summand of dimension $m\geq 2$, and let $B\subseteq U$ and $v\in U$ be as in
Lemma~\ref{lem:R-cosingleton}. Let $r\geq 0$, and $\mathfrak q_j=\mathfrak e_{\mu_j}$ $(1\leq j\leq r)$ be distinct unused basis elements whose indices lie outside the coordinate support of $U$. Put
\[
Q:=\bigoplus_{j=1}^r\langle \mathfrak q_j\rangle,
\qquad
D:=\bigcup_{j=1}^rD(\mathfrak q_j,v).
\]
Then $D$ is $U$-separated.

If $\mathcal R=\F_p$, take $E:=Q$.  If $\mathcal R=\Z/4\Z$, let $J\subseteq\{1,\ldots,r\}$ be arbitrary and put
\[
E:=
\bigoplus_{j\in J}\langle \mathfrak q_j\rangle
\oplus
\bigoplus_{j\notin J}\langle2\mathfrak q_j\rangle.
\]
Then
\begin{equation}\label{eq:R-auxiliary-density}
\frac{
\bigl|\PP(B\cup D)\cap(U\oplus E)\bigr|
}{|U|\,|E|}
=
1-q_\mathcal R^{-m}\gamma_\mathcal R^{-r}.
\end{equation}
\end{lemma}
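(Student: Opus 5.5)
The plan is to compute $\PP(B\cup D)$ explicitly, fibre by fibre over the auxiliary coordinates. The sets $B$, $D(\mathfrak q_1,v),\dots,D(\mathfrak q_r,v)$ are pairwise disjoint: $B\subseteq U$, and each $D(\mathfrak q_j,v)$ lies in $\mathfrak q_j+U$ with the $\mathfrak q_j$ distinct and outside the support of $U$. Its elements are also distinct because $v\neq0$. Hence
\[
\PP(B\cup D)=\PP(B)+\sum_{j=1}^r\PP\bigl(D(\mathfrak q_j,v)\bigr)=(U\setminus\{v\})+\sum_{j=1}^r\PP\bigl(D(\mathfrak q_j,v)\bigr).
\]
I would first describe each $\PP(D(\mathfrak q,v))$ according to the $\mathfrak q$-coefficient $a\in\mathcal R$, as $a\mathfrak q+S_a$ with $S_a\subseteq\langle v\rangle$.

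For $\mathcal R=\F_p$, a subset of size $s$ has $\mathfrak q$-coefficient $s \bmod p$. For $1\le s\le p-1$, Lemma~\ref{lem_basic2} gives all of $\langle v\rangle$, since the coefficients $t$ range over a size-$s$ subset of $\F_p$ and their sum takes every value. For $s=0$ and $s=p$ we get $0$, because $\sum_{t\in\F_p}t=0$. So $S_0=\{0\}$ and $S_a=\langle v\rangle$ for $a\neq0$.

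For $\mathcal R=\Z/4\Z$, I would list the eight subsets directly:
\[
S_0=\{0\},\quad S_1=\{0,v,2v\},\quad S_2=\{v,2v,3v\},\quad S_3=\{3v\}.
\]

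Separation is immediate in both cases. An element of $\PP(D)$ lying in $U$ has all $\mathfrak q_j$-coefficients zero. Each block then contributes $S_0=\{0\}$, so the element is $0$.

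Next, I would fix a fibre $U+\sum_j a_j\mathfrak q_j$ with $\sum_j a_j\mathfrak q_j\in E$. The corresponding section of $\PP(B\cup D)$ is $(U\setminus\{v\})+S_{a_1}+\dots+S_{a_r}$. If this sumset $\Sigma$ contains two distinct points, the union of the two distinct translates of $U\setminus\{v\}$ is all of $U$, so the fibre is full. If $\Sigma$ is a single point, the fibre misses exactly one element. For $\F_p$, $\Sigma$ is a singleton iff all $a_j=0$, so exactly one point of $U\oplus Q$ is missing. For $\Z/4\Z$, $\Sigma$ is a singleton iff every $a_j\in\{0,3\}$, since otherwise some $S_{a_j}$ already has size $3$. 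The constraint $\sum_j a_j\mathfrak q_j\in E$ allows both $0$ and $3$ when $j\in J$, and only $0$ when $j\notin J$. So the number of missing points is $2^{|J|}$.

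Finally, I would divide by the total count. For $\mathcal R=\F_p$ the missing proportion is $1/(p^m p^r)$. For $\mathcal R=\Z/4\Z$ it is
\[
\frac{2^{|J|}}{4^m\,4^{|J|}\,2^{r-|J|}}=4^{-m}2^{-r}.
\]
This gives \eqref{eq:R-auxiliary-density}.

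The only delicate point is the $\Z/4\Z$ bookkeeping. One has to check the table of the $S_a$ correctly; for instance, $\mathfrak q+(\mathfrak q+v)+(\mathfrak q+2v)=3\mathfrak q+3v$ and $(\mathfrak q+v)+(\mathfrak q+2v)=2\mathfrak q+3v$. One also has to see that the half-coordinate constraint $\langle 2\mathfrak q_j\rangle$ excludes the value $3$ but keeps $0$, which is exactly what produces the factor $2^{-1}$ per auxiliary coordinate in both cases $j\in J$ and $j\notin J$.
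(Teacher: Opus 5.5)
Your proposal is correct and is essentially the paper's proof: the section tables ($S_0=\{0\}$, $S_a=\langle v\rangle$ for $\F_p$; $S_0,\dots,S_3$ for $\Z/4\Z$) give separation, and a section over $E$ misses a point exactly when $S_{a_1}+\cdots+S_{a_r}$ is a singleton, which yields one missing point (resp.\ $2^{|J|}$); the paper phrases the $\F_p$ case as an iteration of the one-coordinate step from Lemma~\ref{lem:R-cosingleton}, which is the same computation. One small correction: for $\mathcal R=\Z/4\Z$, the distinctness of $\mathfrak q,\mathfrak q+v,\mathfrak q+2v$ (and hence your eight-subset table) requires $2v\neq0$, which holds because $v$ has order $4$, not merely because $v\neq0$.
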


\begin{proof}
The sets $B,D(\mathfrak q_1,v),\ldots,D(\mathfrak q_r,v)$ are pairwise disjoint, and hence their subset-sum sets add.

Suppose first that $\mathcal R=\F_p$. Repeating the one-coordinate argument in the proof of Lemma~\ref{lem:R-cosingleton}, extending the $m$-dimensional coordinate summand $U$ to the $(m+r)$-dimensional coordinate summand $U\oplus Q$, gives $\PP(B\cup D)=(U\oplus Q)\setminus\{v\}$. The same argument shows that \begin{equation}\label{eq:R-auxiliary-separation}
\PP(D)\cap U=\{0\},
\end{equation}
i.e., $D$ is $U$-separated. The missing proportion is $p^{-(m+r)}=q_\mathcal R^{-m}\gamma_\mathcal R^{-r}$, which implies \eqref{eq:R-auxiliary-density}.

Now suppose that $\mathcal R=\Z/4\Z$.  For one auxiliary coordinate $\mathfrak q_j$, the possible $U$-components in the four $\mathfrak q_j$-sections $0,\mathfrak q_j,2\mathfrak q_j,3\mathfrak q_j$ of $\PP(D(\mathfrak q_j,v))$ are
\begin{equation}\label{eq:c4-auxiliary-sections}
\{0\},
\qquad
\{0,v,2v\},
\qquad
\{v,2v,3v\},
\qquad
\{3v\},
\end{equation}
respectively.  If a subset sum from $D$ belongs to $U$, independence of the $\mathfrak q_j$ forces every $\mathfrak q_j$-coefficient to be zero. The displayed
list then forces the contribution of every auxiliary coordinate to be zero.
This proves \eqref{eq:R-auxiliary-separation}.

Consider the sections over $E$.  Such a section has a unique possible $U$-translation precisely when its $\mathfrak q_j$-coefficient belongs to
$\{0,3\}$ for every $j\in J$ and is $0$ for every $j\notin J$. There are exactly $2^{|J|}$ such sections, and each omits one point
of $U$.  Every other section contains at least two distinct translates of $U\setminus\{v\}$ and is therefore full.  Since $|U|=4^m$ and $|E|=4^{|J|}2^{r-|J|}$, the missing proportion is
\[
\frac{2^{|J|}}{|U|\,|E|}
=
\frac{2^{|J|}}{4^m4^{|J|}2^{r-|J|}}
=
4^{-m}2^{-r}
=
q_\mathcal R^{-m}\gamma_\mathcal R^{-r}.
\]
This proves \eqref{eq:R-auxiliary-density}.
\end{proof}

\subsection{Placement and the product formula}

The next lemma places the coordinate summands into suitable filtration subgroups.

\begin{lemma}\label{lem:R-coordinate-placement}
Let \(N\geq1\), \(m\geq 2\), and let
\(\Lambda_0\subseteq\N\) be finite. Then there exist distinct indices $\lambda_1,\ldots,\lambda_m\in\N\setminus\Lambda_0$ satisfying $c(\lambda_j)>N$ $(1\leq j\leq m)$, together with a finite set
\[
B\subseteq
U:=\bigoplus_{j=1}^m\langle \mathfrak e_{\lambda_j}\rangle
\]
and an element \(v\in U\) of additive order \(q_\mathcal R\), such that $\PP(B)=U\setminus\{v\}$ and
\begin{equation}\label{eq:R-filtration-compatible}
v\in G_i
\quad\Longleftrightarrow\quad
U\subseteq G_i
\qquad(i\geq1).
\end{equation}
\end{lemma}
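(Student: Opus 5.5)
Since $\Lambda_0$ is finite and only finitely many $\lambda$ satisfy $c(\lambda)\leq N$ (each $G_N$ is finite), I first pick $m$ distinct indices outside $\Lambda_0$ with $c(\lambda)>N$, ordered so that the key coordinate enters the filtration \emph{last}. Then I apply Lemma~\ref{lem:R-cosingleton} to the summand they span and check \eqref{eq:R-filtration-compatible} from the normal form \eqref{eq:unified-filtration-normal-form}.

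\emph{Case $\mathcal R=\F_p$.} Here $c=d$. I order the chosen indices so that $d(\lambda_m)=\max_j d(\lambda_j)$, and identify $\mathfrak e_j$ with $\mathfrak e_{\lambda_j}$ in Lemma~\ref{lem:R-cosingleton}. This gives $B$ and $v$ with $\PP(B)=U\setminus\{v\}$ and with nonzero $\mathfrak e_{\lambda_m}$-coordinate. By \eqref{eq:unified-filtration-normal-form}, an element lies in $G_i$ iff each nonzero coordinate $\lambda$ has $d(\lambda)\leq i$. So $v\in G_i$ forces $d(\lambda_m)\leq i$, hence $d(\lambda_j)\leq i$ for all $j$, hence $U\subseteq G_i$. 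The converse is trivial, and $v\neq0$ has order $p$.

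\emph{Case $\mathcal R=\Z/4\Z$.} By \eqref{eq:unified-filtration-normal-form}, a general element $\sum a_\lambda\mathfrak e_\lambda$ lies in $G_i$ iff every odd $a_\lambda$ has $d(\lambda)\leq i$ and every nonzero even $a_\lambda$ has $c(\lambda)\leq i$. The element $v=2\mathfrak e_1+3\mathfrak e_2+\cdots+3\mathfrak e_m$ therefore lies in $G_i$ iff
\[
c(\lambda_1)\leq i\quad\text{and}\quad d(\lambda_j)\leq i\ (2\leq j\leq m).
\]
On the other hand, $U\subseteq G_i$ iff $d(\lambda_j)\leq i$ for all $j$. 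For these to agree I need $d(\lambda_1)\leq\max_{j\geq2}d(\lambda_j)$; together with $c\leq d$ this makes the two conditions equivalent. So I label the chosen indices so that $\lambda_1$ does \emph{not} have the strictly largest $d$-value. One way is to take $\lambda_1$ with the smallest $d$-value among the $m\geq2$ chosen indices; then $d(\lambda_1)\leq d(\lambda_2)$ for any other index. Lemma~\ref{lem:R-cosingleton} then supplies $B$ and this $v$, and $v$ has order $4$ because $m\geq2$.

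The main obstacle is only this ordering step, especially the $\Z/4\Z$ case: the coordinate carrying the coefficient $2$ is detected only through $c$, not $d$. The fix is to ensure that some odd-coefficient coordinate enters the filtration no earlier than it. The remaining checks are direct reading of the normal form.
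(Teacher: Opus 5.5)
Your proposal is correct and matches the paper's proof: you use the same finiteness argument to choose $m$ indices outside $\Lambda_0$ with $c(\lambda_j)>N$, the same relabelling (maximal $d(\lambda_m)$ when $\mathcal R=\F_p$, minimal $d(\lambda_1)$ when $\mathcal R=\Z/4\Z$), and the same reading of the normal form \eqref{eq:unified-filtration-normal-form}. Your explicit membership criterion for $v$ in the $\Z/4\Z$ case only spells out the paper's observation that the odd coordinates of $v$ force $d(\lambda_j)\leq i$ for $j\geq2$.
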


\begin{proof}
By \eqref{eq:unified-filtration-normal-form}, every index $\lambda$ satisfying $c(\lambda)\leq N$ contributes a nonzero direct summand, either $\langle \mathfrak e_\lambda\rangle$ or $\langle2\mathfrak e_\lambda\rangle$, to the finite group $G_N$. Hence there are only finitely many such indices. Since \(\Lambda_0\) is finite, we may choose distinct indices $\lambda_1,\ldots,\lambda_m\in\N\setminus\Lambda_0$ such that $c(\lambda_j)>N$ $(1\leq j\leq m)$. Put $U:=\bigoplus_{j=1}^m\langle \mathfrak e_{\lambda_j}\rangle$.

Suppose first that \(\mathcal R=\F_p\). Relabel the chosen indices so that $d(\lambda_j)\leq d(\lambda_m)$ for $1\leq j\leq m$. By Lemma~\ref{lem:R-cosingleton}, there are a finite set \(B\subseteq U\) and an element \(v\in U\) of additive order \(p\) such that $\PP(B)=U\setminus\{v\}$, and the \(\mathfrak e_{\lambda_m}\)-coordinate of \(v\) is nonzero. If \(v\in G_i\), then \eqref{eq:unified-filtration-normal-form} implies
\(d(\lambda_m)\leq i\). Hence $d(\lambda_j)\leq d(\lambda_m)\leq i$ for $1\leq j\leq m$. So \(U\subseteq G_i\).

Now suppose that \(\mathcal R=\Z/4\Z\). Relabel the chosen indices so that $d(\lambda_1)\leq d(\lambda_j)$ for $2\leq j\leq m$. Lemma~\ref{lem:R-cosingleton} gives a finite set \(B\subseteq U\) satisfying $\PP(B)=U\setminus\{v\}$ and $v=2\mathfrak e_{\lambda_1} +3\mathfrak e_{\lambda_2} +\cdots +3\mathfrak e_{\lambda_m}$. In particular, \(v\) has additive order \(4\). Suppose that \(v\in G_i\). For every \(j\geq2\), the \(\mathfrak e_{\lambda_j}\)-coordinate of \(v\) is odd. By \eqref{eq:unified-filtration-normal-form}, this is possible only if $d(\lambda_j)\leq i$. The choice of \(\lambda_1\) then gives $d(\lambda_1)\leq d(\lambda_j)\leq i$, and therefore \(U\subseteq G_i\).

In both cases, the reverse implication is immediate: if \(U\subseteq G_i\), then \(v\in U\subseteq G_i\). Thus \eqref{eq:R-filtration-compatible} follows.
\end{proof}

We now describe all choices entering the product formula.  Let $\mathcal V$ be a finite or countable index set for the co-singleton blocks. For each $\nu\in\mathcal V$, choose a finite coordinate summand
\[
U_\nu=
\bigoplus_{\lambda\in I_\nu}\langle \mathfrak e_\lambda\rangle,
\qquad
m_\nu:=|I_\nu|\geq 2,
\]
together with the subset $B_\nu\subseteq U_\nu$ and the missing element $v_\nu\in U_\nu$ of order $q_\mathcal R$ such that
\begin{equation}\label{eq:R-compatible-cosingleton}
\PP(B_\nu)=U_\nu\setminus\{v_\nu\},
\qquad
v_\nu\in G_i
\ \Longleftrightarrow\
U_\nu\subseteq G_i\; (i\in \N).
\end{equation}
For each $\nu$, choose a finite or countable family of further basis elements $\mathfrak q_{\nu,j}=\mathfrak e_{\mu_{\nu,j}}$ $(j\in\mathcal J_\nu)$ such that
\begin{equation}\label{eq:R-auxiliary-after-support}
c(\mu_{\nu,j})
>
\max_{\lambda\in I_\nu}d(\lambda).
\end{equation}
Assume that all coordinate supports used for distinct choices are pairwise disjoint, and put
\[
D_{\nu,j}:=D(\mathfrak q_{\nu,j},v_\nu),
\qquad
H_\nu:=
U_\nu\oplus
\bigoplus_{j\in\mathcal J_\nu}\langle \mathfrak q_{\nu,j}\rangle,\qquad
A_\nu:=
B_\nu\cup
\bigcup_{j\in\mathcal J_\nu}D_{\nu,j}.
\]

For each \(\nu\) and \(i\geq1\), let \(r_\nu(i)\) denote the number of auxiliary coordinates associated with \(B_\nu\) that contribute to the density at level \(G_i\). More precisely,
\[
r_\nu(i):=
\begin{cases}
\#\{j\in\mathcal J_\nu:2\mathfrak q_{\nu,j}\in G_i\},
 &\mathcal R=\Z/4\Z,\\[1mm]
\#\{j\in\mathcal J_\nu:\mathfrak q_{\nu,j}\in G_i\},
 &\mathcal R=\F_p.
\end{cases}
\]
This number is finite because $G_i$ is finite.

Let $\Omega$ be the set of indices of all unused basis elements at the end of construction, and denote
\[
F:=\bigoplus_{\lambda\in\Omega}\langle \mathfrak e_\lambda\rangle,
\qquad
A_F:=
\bigcup_{\lambda\in\Omega}
\bigl(\langle \mathfrak e_\lambda\rangle\setminus\{0\}\bigr).
\]
Finally, define
\begin{equation}\label{eq:R-final-set}
A:=
A_F\cup
\bigcup_{\nu\in\mathcal V}A_\nu.
\end{equation}

\begin{lemma}\label{prop:R-factor-placement}
For the set $A$ in \eqref{eq:R-final-set}, we have
\begin{equation}\label{eq:unified-global-factorization}
d_i^{G_\bullet}(\PP(A))
=\prod_{\substack{\nu\in\mathcal V\\U_\nu\subseteq G_i}}
\left(1-q_\mathcal R^{-m_\nu}\gamma_\mathcal R^{-r_\nu(i)}\right),
\qquad(i\geq1).
\end{equation}
Only finitely many factors occur for each fixed $i$.

Moreover, suppose that a finite part of the construction has been chosen. Adjoining any finite or countable family of further co-singleton blocks and auxiliary coordinates, using only basis elements \(\mathfrak e_\lambda\) with \(c(\lambda)>N\), does not change the densities \(d_i^{G_\bullet}(\PP(A))\) for \(i\leq N\).
\end{lemma}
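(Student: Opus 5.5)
The plan is to exploit the direct-sum structure. $G$ is the internal direct sum of $F$ and the blocks $H_\nu$ ($\nu\in\mathcal V$), because the coordinate supports are pairwise disjoint and, together with $\Omega$, exhaust $\N$. The pieces $A_F$ and $A_\nu$ lie in distinct summands and are pairwise disjoint, so
\[
\PP(A)=\PP(A_F)+\sum_{\nu}\PP(A_\nu),
\]
as a direct-sum (finitely supported) product. Moreover $\PP(A_F)=F$, since $A_F$ contains every nonzero element of each cyclic coordinate of $F$. By Lemma~\ref{prop:coordinate-normal-form}, $G_i$ is itself a coordinatewise product: it is the direct sum over all $\lambda$ of $\langle\mathfrak e_\lambda\rangle$, $\langle 2\mathfrak e_\lambda\rangle$ or $0$. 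Hence
\[
G_i=(G_i\cap F)\oplus\bigoplus_\nu (G_i\cap H_\nu),\qquad G_i\cap H_\nu=(G_i\cap U_\nu)\oplus\bigoplus_j(G_i\cap\langle\mathfrak q_{\nu,j}\rangle).
\]
Since a subset of a direct sum is a product of its components, $|\PP(A)\cap G_i|$ factors as
\[
|G_i\cap F|\prod_\nu|\PP(A_\nu)\cap G_i\cap H_\nu|,
\]
and dividing by $|G_i|$ gives a product of local densities. This step requires $\PP(A)\cap G_i$ to be a product set: an element of $G_i$ decomposes into components lying in $G_i\cap H_\nu$, and it belongs to $\PP(A)$ if and only if each component lies in $\PP(A_\nu)$. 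That holds because the summands $\PP(A_\nu)$ live in independent coordinate blocks.

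Next I compute each local factor. If $U_\nu\subseteq G_i$, then $G_i\cap H_\nu=U_\nu\oplus E$, with $E$ of the form in Lemma~\ref{prop:R-auxiliary}. The set $J$ consists of the auxiliary $\mathfrak q$'s fully present, the others contribute $\langle 2\mathfrak q\rangle$ or $0$, and the ones contributing $0$ can be discarded. Lemma~\ref{prop:R-auxiliary}, applied with the $r_\nu(i)$ contributing auxiliaries, gives the factor $1-q_\mathcal R^{-m_\nu}\gamma_\mathcal R^{-r_\nu(i)}$. The non-contributing auxiliaries do not matter, because their subset sums have nonzero $\mathfrak q$-coordinate outside $G_i$ unless that contribution is zero (separation, \eqref{eq:c4-auxiliary-sections}).

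If instead $U_\nu\not\subseteq G_i$, I claim the local factor is $1$. By \eqref{eq:R-compatible-cosingleton} we have $v_\nu\notin G_i$. By \eqref{eq:R-auxiliary-after-support}, $c(\mu_{\nu,j})>d(\lambda)$ for all $\lambda\in I_\nu$, so no auxiliary coordinate meets $G_i$ when $U_\nu\not\subseteq G_i$: if some $2\mathfrak q\in G_i$, then $i\ge c(\mu)>\max d$, which forces $U_\nu\subseteq G_i$. Thus $G_i\cap H_\nu=G_i\cap U_\nu$, which is a subgroup not containing $v_\nu$, so it lies inside $\PP(B_\nu)=U_\nu\setminus\{v_\nu\}$. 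I must also check that, restricted to $G_i\cap H_\nu$, $\PP(A_\nu)$ equals $\PP(B_\nu)\cap G_i$; this follows from the $U_\nu$-separation of $D$ in Lemma~\ref{prop:R-auxiliary}. The $F$-factor is $1$. Only finitely many $U_\nu$ lie in the finite group $G_i$, so the product is finite. This establishes \eqref{eq:unified-global-factorization}.

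For the stability statement, adjoining new blocks whose basis elements $\mathfrak e_\lambda$ all have $c(\lambda)>N$ means the new $U_\nu$ are not contained in $G_i$ for $i\le N$, and the new auxiliaries do not meet $G_i$, so $r_\nu(i)$ is unchanged for old blocks. Those coordinates leave $F$, but they still meet $G_i$ trivially. By the formula just proved, all factors at levels $i\le N$ stay the same. The main obstacle I expect is the bookkeeping in the $\Z/4\Z$ case: showing that $G_i\cap H_\nu$ has exactly the form $U_\nu\oplus E$ required by Lemma~\ref{prop:R-auxiliary}, and confirming that the product-set factorization survives intersecting with half-present coordinates $\langle 2\mathfrak e_\lambda\rangle$. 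Both reduce to the coordinatewise shape of $G_i$ in \eqref{eq:unified-filtration-normal-form}.
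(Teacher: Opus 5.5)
Your proposal is correct and follows essentially the same route as the paper. You split $G_i$ and $\PP(A)$ coordinatewise along $F$ and the blocks $H_\nu$, take the local factor from Lemma~\ref{prop:R-auxiliary} when $U_\nu\subseteq G_i$ and the factor $1$ otherwise (via the compatibility condition, \eqref{eq:R-auxiliary-after-support} and $U_\nu$-separation), and read off stability from the formula; your explicit check that $\PP(A)\cap G_i$ is a product set is exactly what the paper compresses into ``the local cardinalities therefore multiply.''
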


\begin{proof}
Fix $i\geq1$ and $\nu\in\mathcal V$. We mention that $U_\nu \subseteq G_i$ is equivalent to $d(\lambda)\leq i$ for every $\lambda\in I_\nu$.

Suppose first that $U_\nu\not\subseteq G_i$. By \eqref{eq:R-compatible-cosingleton}, $v_\nu\notin G_i$, and hence
\[
\PP(B_\nu)\cap G_i=U_\nu\cap G_i.
\]
Moreover, one has $i<\max_{\lambda\in I_\nu}d(\lambda)$, so \eqref{eq:R-auxiliary-after-support} implies that no associated auxiliary coordinate has a nonzero part in $G_i$. The separation condition \eqref{eq:R-auxiliary-separation} shows that these auxiliary sets create no additional subset sum in $G_i$. Thus this value of $\nu$ contributes the factor $1$.

Now suppose that $U_\nu\subseteq G_i$. Let
\[
\begin{cases}
K_{\nu,i}:=\{j\in\mathcal J_\nu:2\mathfrak q_{\nu,j}\in G_i\},\,\,J_{\nu,i}:=\{j\in K_{\nu,i}:\mathfrak q_{\nu,j}\in G_i\},\qquad &\mathcal R=\Z/4\Z,\\
K_{\nu,i}:=\{j\in\mathcal J_\nu:\mathfrak q_{\nu,j}\in G_i\},\qquad &\mathcal R=\F_p.
\end{cases}
\]
In either case, $|K_{\nu,i}|=r_\nu(i)$, and this set is finite. The auxiliary coordinate summand contained in $G_i$ is
\[
\begin{cases}
\bigoplus_{j\in J_{\nu,i}}\langle \mathfrak q_{\nu,j}\rangle \oplus\bigoplus_{j\in K_{\nu,i}\setminus J_{\nu,i}}
\langle2\mathfrak q_{\nu,j}\rangle,\qquad &\mathcal R=\Z/4\Z,\\
\bigoplus_{j\in K_{\nu,i}}\langle \mathfrak q_{\nu,j}\rangle,\qquad &\mathcal R=\F_p.
\end{cases}
\]
After relabelling the coordinates indexed by $K_{\nu,i}$, we apply Lemma~\ref{prop:R-auxiliary} with
$r=|K_{\nu,i}|$. In particular, take the distinguished subset in that lemma to be $J_{\nu,i}$ when $\mathcal R=\Z/4\Z$. This gives
\[
\frac{|\PP(A_\nu)\cap G_i|}{|H_\nu\cap G_i|}
=
1-q_\mathcal R^{-m_\nu}\gamma_\mathcal R^{-r_\nu(i)}.
\]

Any subset sum uses only finitely many auxiliary coordinates. If such a sum lies in $G_i$, the separation condition
\eqref{eq:R-auxiliary-separation} shows that the coordinates not in $K_{\nu,i}$ make zero total contribution.

Different values of $\nu$ use disjoint coordinate supports. Also, $\PP(A_F)=F$, because every element of $F$ is the sum of its nonzero coordinate components. The expression \eqref{eq:unified-filtration-normal-form} gives
\[
G_i=(F\cap G_i)\oplus
\bigoplus_{\nu\in\mathcal V}(H_\nu\cap G_i),
\]
where only finitely many summands are nonzero. The local cardinalities therefore multiply, which proves \eqref{eq:unified-global-factorization}. Since the nonzero subgroups $U_\nu\subseteq G_i$ have disjoint supports and $G_i$ is finite, only finitely many factors occur.

Finally, fix $i\leq N$. Every newly added co-singleton support is not contained in $G_i$, and every newly added auxiliary count is zero at level $i$. Thus each new local factor in \eqref{eq:unified-global-factorization} is $1$, while all previously chosen factors are unchanged. This proves the stability assertion. It also applies to a countable family, because only finitely many basis coordinates meet the finite group $G_i$.
\end{proof}

\subsection{Controlled density changes}

The next lemma provides the decreasing step in the density construction: by adjoining co-singleton blocks on new coordinates, one can approach any smaller target through arbitrarily small decreases.

In the remainder of this section, every selected or terminal filtration
level \(I\) is taken so that every basis element used up to that point
belongs to \(G_I\). This is always possible because only finitely many
basis elements have been used and \(G_\bullet\) exhausts \(G\).
Consequently, all factors belonging to the preceding finite construction
remain constant at every later filtration level.

\begin{lemma}\label{lem:R-controlled-descent}
Suppose that a finite construction has density $x>0$ at a selected filtration level $I$.  Let $0\leq t<x$ and $\delta>0$.  By adding finitely many new co-singleton blocks after level $I$, one obtains a later level with density $x'$ satisfying $t-\delta<x'\leq t$ when $t>0$, and $0<x'<\delta$ when $t=0$. Throughout this descent, the density is nonincreasing, and every nonzero decrease is smaller than $\delta$.
\end{lemma}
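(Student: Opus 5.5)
The plan is to use the product formula of Lemma~\ref{prop:R-factor-placement} and to add co-singleton blocks one at a time. Each block has no auxiliary coordinates, so $r_\nu\equiv 0$. Lemma~\ref{lem:R-coordinate-placement} places each new block on unused coordinates with $c(\lambda)>N$, where $N$ is the current selected level. The stability part of Lemma~\ref{prop:R-factor-placement} then guarantees that densities at levels $\leq N$ do not change. At any later level $i$, the density equals the old product times the factors $1-q_\mathcal R^{-m_\nu}$ of those new blocks with $U_\nu\subseteq G_i$.

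The factors enter in a definite way. Blocks are added sequentially, and the next selected level is chosen so that it contains the previous block, as the standing convention requires. Hence at every intermediate level the density is the density at the selected level multiplied by either $1$ or the new factor. The density is therefore nonincreasing along the construction. Every nonzero decrease has size at most $x_{\rm cur}\,q_\mathcal R^{-m}\leq q_\mathcal R^{-m}$. Choosing every dimension $m$ so large that $q_\mathcal R^{-m}<\delta$ makes each decrease smaller than $\delta$.

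Next I check that the target is reached. The current density is $y$. If $y>t$, add a block with factor $1-q_\mathcal R^{-m}$. I choose $m$ as the least integer $\geq 2$ such that $q_\mathcal R^{-m}<\delta$ and, when $t>0$, also $y\,q_\mathcal R^{-m}<\delta$ is automatic. Each step then multiplies the density by a fixed factor $\theta=1-q_\mathcal R^{-m}<1$. This is legitimate even though the block dimension is fixed, because the coordinates are fresh at each step.

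\begin{itemize}
\item If $t>0$, after finitely many steps the density first drops to $\leq t$. Call this value $x'$. The previous value was $>t$, and the decrease was smaller than $\delta$, so $t-\delta<x'\leq t$.
\item If $t=0$, iterate until the density is $<\delta$. This happens because $\theta^k x\to0$, and the density stays positive since each factor is positive.
\end{itemize}

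The terminal level is a level containing all the blocks used.

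The main point needing care is the claim about intermediate levels. A single block's factor appears exactly at those levels $i$ with $U_\nu\subseteq G_i$. By \eqref{eq:R-compatible-cosingleton} and the proof of Lemma~\ref{prop:R-factor-placement}, the factor at every other level is $1$. So between consecutive selected levels the density takes only the two values "before" and "after". This gives the monotonicity and small-step assertions directly from \eqref{eq:unified-global-factorization}.
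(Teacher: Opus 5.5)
Your proposal is correct and follows the paper's proof essentially step for step. You fix one dimension $m\geq 2$ with $w=q_\mathcal R^{-m}<\delta$, add fresh blocks of that dimension one at a time via Lemma~\ref{lem:R-coordinate-placement}, and read off the density $x(1-w)^a$ from the product formula. You then stop at the first $a$ with $x(1-w)^a\leq t$ (or $x(1-w)^a<\delta$ when $t=0$), and the one-step decrease $x(1-w)^{a-1}w\leq w<\delta$ gives the lower bound.

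Your explicit check that between consecutive selected levels the density takes only the ``before'' and ``after'' values, via \eqref{eq:R-compatible-cosingleton}, spells out what the paper leaves implicit. The garbled clause about $y\,q_\mathcal R^{-m}<\delta$ being ``automatic'' is harmless, since $y\leq 1$.
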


\begin{proof}
Choose $m\geq 2$ so large that $w:=q_\mathcal R^{-m}<\delta$. Using Lemma~\ref{lem:R-coordinate-placement}, add co-singleton blocks of dimension $m$, one after another. After each new coordinate summand has become fully contained in the filtration, choose the next one beyond that level.  After $a$ new summands have entered, the product formula gives the density $x(1-w)^a$.

If \(t>0\), let \(a\) be the least positive integer for which
\(x(1-w)^a\leq t\). The preceding density \(x(1-w)^{a-1}\) is larger
than \(t\), and the decrease from this immediately preceding density is
\[
 x(1-w)^{a-1}-x(1-w)^a
 =x(1-w)^{a-1}w
 \leq w<\delta.
\]
Thus $t-\delta<x(1-w)^a\leq t$.
The same calculation shows that every preceding nonzero decrease is
smaller than \(\delta\).

If $t=0$, choose $a$ sufficiently large that $x(1-w)^a<\delta$. The proof is completed.
\end{proof}

The next lemma provides the increasing step. It shows that auxiliary coordinates cross any target below $1$, with the excess above the target controlled by the largest current missing proportion.

\begin{lemma}\label{lem:R-controlled-ascent}
Suppose that a finite construction has reached a selected filtration
level $I$ containing every basis element used so far. Let
$\mathcal V_0$ be the nonempty finite family of co-singleton blocks
introduced up to this stage. For each $\nu\in\mathcal V_0$, let
$r_\nu^{(0)}$ be the number of auxiliary coordinates already attached
to the block indexed by $\nu$, and put
\[
M_0:=
\max_{\nu\in\mathcal V_0}
q_\mathcal R^{-m_\nu}\gamma_\mathcal R^{-r_\nu^{(0)}}.
\]
If the density at $G_I$ is $x$ and $x<h<1$, then finitely many further
auxiliary coordinates produce a later selected level with density
$x'$ satisfying
\[
h\leq x'<h+M_0.
\]
Throughout this extension, the density is nondecreasing and lies
between $x$ and $x'$.
\end{lemma}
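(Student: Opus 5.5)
We add auxiliary coordinates one at a time, each time attaching the new coordinate to a block whose current missing proportion is largest, and we stop at the first level where the density reaches $h$. Throughout we use the product formula of Lemma~\ref{prop:R-factor-placement}. At a level $i\geq I$ that contains all previously used basis elements, the density is
\[
\prod_{\nu\in\mathcal V_0}\bigl(1-w_\nu(i)\bigr),\qquad w_\nu(i):=q_\mathcal R^{-m_\nu}\gamma_\mathcal R^{-r_\nu(i)}.
\]
Here $r_\nu(i)\geq r_\nu^{(0)}$, and $r_\nu(i)$ is nondecreasing in $i$. Hence every factor is nondecreasing along the filtration. Since we add no new co-singleton blocks, the family of factors stays $\mathcal V_0$ at all levels $i\geq I$. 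So the density is nondecreasing from $x$ on.

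\emph{Step 1: one coordinate at a time.} Let $\nu^*$ be a block with the largest current $w_{\nu^*}$. We choose an unused basis element $\mathfrak q=\mathfrak e_\mu$ with $c(\mu)$ larger than the current selected level. Then \eqref{eq:R-auxiliary-after-support} holds, and by the stability part of Lemma~\ref{prop:R-factor-placement} the earlier levels are unchanged. We adjoin $D(\mathfrak q,v_{\nu^*})$ and take as the next selected level the first level containing $\mathfrak q$. For $\mathcal R=\Z/4\Z$ there may be intermediate levels containing $2\mathfrak q$ but not $\mathfrak q$; the count $r_{\nu^*}$ already increases by one there. In every case the only change between consecutive selected levels is that a single factor $1-w$ becomes $1-w/\gamma_\mathcal R$, possibly passing through intermediate levels where it equals exactly this value. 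At any level of this extension the density therefore takes one of two values, the old one or the new one.

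\emph{Step 2: termination.} Each such step multiplies the current maximal missing proportion by $\gamma_\mathcal R^{-1}\leq 1/2$. After enough steps, every $w_\nu\leq\epsilon$ for any prescribed $\epsilon>0$. The density is then at least $1-|\mathcal V_0|\epsilon$, which exceeds $h$ once $\epsilon$ is small, because $h<1$ and $\mathcal V_0$ is finite. So a first level with density $\geq h$ exists. Call its density $x'$.

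\emph{Step 3: bounding the overshoot.} Let $y<h$ be the density just before the final step. Since the last level before the crossing is an earlier level of the filtration, the value $y$ is attained at a level and is less than $h$. In the final step the factor $1-w$, with $w$ the current maximum, is replaced by $1-w/\gamma_\mathcal R$. The density is the product of this factor with the other factors, and that product is at most $1$. Therefore
\[
x'-y=\frac{y}{1-w}\Bigl(w-\frac{w}{\gamma_\mathcal R}\Bigr)\leq w\Bigl(1-\frac1{\gamma_\mathcal R}\Bigr)<w\leq M_0,
\]
since maximal missing proportions never increase. This gives $h\leq x'<h+M_0$. Monotonicity then gives the final assertion that all densities in the extension lie between $x$ and $x'$.

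The main obstacle is the bookkeeping at intermediate levels in the $\Z/4\Z$ case. A level that contains $2\mathfrak q$ but not $\mathfrak q$ must already show the increased factor, with no intermediate value; this is exactly how $r_\nu(i)$ is defined, through $2\mathfrak q_{\nu,j}\in G_i$. The product formula then gives the value directly. We also need to choose every new auxiliary coordinate with $c(\mu)$ beyond the last selected level, so that earlier densities are untouched.
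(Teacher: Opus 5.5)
Your proposal is correct and follows essentially the same route as the paper. Both arguments attach auxiliary coordinates one at a time to a block of maximal current missing proportion, choosing $c(\mu)$ beyond the current selected level and taking the next selected level to be $d(\mu)$. Both get convergence of the density to $1$ because each block can be divided by $\gamma_\mathcal R$ only finitely often while its missing proportion is at least $\varepsilon$, and both bound the overshoot by $w(1-\gamma_\mathcal R^{-1})\prod_{\nu\neq\nu^*}(1-w_\nu)<w\leq M_0$. One small point to tighten: take $x'$ at the selected level $d(\mu)$ of the crossing step, rather than at the first level with density at least $h$, which in the $\Z/4\Z$ case may be $c(\mu)$. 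The two levels have the same density, and this way the lemma's ``later selected level'' is literally produced.
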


\begin{proof}
We construct the auxiliary extension successively. After $n$ new
auxiliary coordinates have been added, let $r_\nu^{(n)}$ denote the
total number of auxiliary coordinates attached to the block indexed
by $\nu$, and define its current missing proportion by $\rho_\nu^{(n)}
:=
q_\mathcal R^{-m_\nu}\gamma_\mathcal R^{-r_\nu^{(n)}}$.  Thus
\[
\rho_\nu^{(0)}
=
q_\mathcal R^{-m_\nu}\gamma_\mathcal R^{-r_\nu^{(0)}},
\qquad
M_0=\max_{\nu\in\mathcal V_0}\rho_\nu^{(0)}.
\]
Since every coordinate used so far is contained in the initial
selected subgroup, the product formula gives
\[
x_0:=x
=
\prod_{\nu\in\mathcal V_0}
\left(1-\rho_\nu^{(0)}\right).
\]

Suppose that $n$ auxiliary coordinates have been added and that the
current selected filtration level is $J_n$. Choose
$\nu_n\in\mathcal V_0$ such that $\rho_{\nu_n}^{(n)} = \max_{\nu\in\mathcal V_0}\rho_\nu^{(n)}$. Choose an unused basis element $\mathfrak q=\mathfrak e_\mu$ with $c(\mu)>J_n$, attach the
auxiliary set $D(\mathfrak q,v_{\nu_n})$, and pass to the selected level
$J_{n+1}:=d(\mu)$. Then
\[
r_{\nu}^{(n+1)}
=
\begin{cases}
r_\nu^{(n)}+1,&\nu=\nu_n,\\
r_\nu^{(n)},&\nu\neq\nu_n,
\end{cases}\qquad
\rho_{\nu}^{(n+1)}
=
\begin{cases}
\gamma_\mathcal R^{-1}\rho_\nu^{(n)},&\nu=\nu_n,\\
\rho_\nu^{(n)},&\nu\neq\nu_n.
\end{cases}
\]

Let $x_n$ be the density at the $n$-th selected level. By the product formula, we have $x_n = \prod_{\nu\in\mathcal V_0}
\big(1-\rho_\nu^{(n)}\big)$. Consequently,
\[
x_{n+1}-x_n
=
\rho_{\nu_n}^{(n)}
(1-\gamma_\mathcal R^{-1})
\prod_{\substack{\nu\in\mathcal V_0\\\nu\neq\nu_n}}
\left(1-\rho_\nu^{(n)}\right)
<
\rho_{\nu_n}^{(n)}.
\]
In particular, $(x_n)_{n\geq0}$ is increasing.

Put $M_n:=\max_{\nu\in\mathcal V_0}\rho_\nu^{(n)}$. We claim that $M_n\to0$. Fix $\varepsilon>0$. Whenever
$M_n\geq\varepsilon$, the selected value
$\rho_{\nu_n}^{(n)}=M_n$ is at least $\varepsilon$ and is divided by
$\gamma_\mathcal R$. For each fixed $\nu\in\mathcal V_0$, this can happen only
finitely many times before its missing proportion becomes smaller
than $\varepsilon$. Since $\mathcal V_0$ is finite, there are only
finitely many indices $n$ for which $M_n\geq\varepsilon$. Therefore, as $n\rightarrow \infty$, one has  $M_n\longrightarrow0$, and $\rho_\nu^{(n)}\longrightarrow0$ $(\nu\in\mathcal V_0)$. Hence
\[
x_n
=
\prod_{\nu\in\mathcal V_0}
\left(1-\rho_\nu^{(n)}\right)
\longrightarrow1.
\]

Since $h<1$, there is a least positive integer $N$ such that
$x_N\geq h$. Then $x_{N-1}<h$, and
\[
x_N-x_{N-1}
<
\rho_{\nu_{N-1}}^{(N-1)}
\leq M_{N-1}
\leq M_0.
\]
Therefore, with $x':=x_N$, we deduce that $h\leq x'<h+M_0$.

Finally, consider the filtration levels between $J_n$ and $J_{n+1}$.
If $\mathcal R=\F_p$, then $c(\mu)=d(\mu)$, so the new auxiliary coordinate
contributes when it enters in full. If $\mathcal R=\Z/4\Z$, it contributes when
$2\mathfrak q$ enters at level $c(\mu)$, and its contribution does not change
when $\mathfrak q$ enters in full at level $d(\mu)$. In either case, the product
formula shows that the density is nondecreasing from $x_n$ to
$x_{n+1}$. Hence every density occurring during the extension lies
between $x$ and $x'$.
\end{proof}

\subsection{Construction of the density sequence}

We proceed as in the proof of Proposition~\ref{prop:cyclic-cases},
using controlled descent and controlled ascent in place of interval
contractions and two-translate extensions. Each cycle first lowers the
density close to $\alpha$ and then raises it to a target $h_k$, where
$h_k\to\beta$.

\begin{proof}[Proof of Proposition~\ref{prop:coordinate-cases}]
The cases $\beta=0$ and $\alpha=1$ are immediate, as in the proof of
Proposition~\ref{prop:cyclic-cases}. We therefore assume that $0<\beta\leq1$ and $0\leq\alpha<1$.

Choose a sequence $(h_k)_{k\geq1}$ such that $\alpha<h_k<1$ and $h_k\longrightarrow\beta$, and choose $\delta_k>0$ such that $\delta_k<
\min\left\{1/k,h_k-\alpha\right\}$. Begin with the initial density $x_0=1$.

Suppose that cycle $k$ begins with density $x_{k-1}>\alpha$. Apply
Lemma~\ref{lem:R-controlled-descent} with target $\alpha$ and error
$\delta_k$, and denote the resulting density by $z_k$. Then $|z_k-\alpha|<\delta_k$ and $z_k<h_k$.
Every nonzero decrease during this descent is smaller than $\delta_k$,
and every new co-singleton block has missing proportion smaller than
$\delta_k$.

Let $\mathcal V_k$ be the finite family of all co-singleton blocks
introduced up to this point. If $r_{\nu,k}$ auxiliary coordinates are
currently attached to the block indexed by $\nu$, put
\[
M_k:=
\max_{\nu\in\mathcal V_k}
q_\mathcal R^{-m_\nu}\gamma_\mathcal R^{-r_{\nu,k}}.
\]
Apply Lemma~\ref{lem:R-controlled-ascent}, using the maximal-choice
construction in its proof, with initial density $z_k$ and target
$h_k$. This gives a later selected level with density $x_k$ satisfying $h_k\leq x_k<h_k+M_k$.
Every density occurring during this ascent lies between $z_k$ and
$x_k$.

We claim that $M_k\to0$. Fix $\varepsilon>0$. For all sufficiently large $k$, one has $\delta_k<\varepsilon$, so every block introduced thereafter has initial missing proportion smaller than $\varepsilon$ and remains below $\varepsilon$. Only finitely many earlier blocks exist. Whenever the current maximum is at least $\varepsilon$, the maximal-choice rule selects one of these earlier blocks and divides its missing proportion by $\gamma_\mathcal R$. This can happen only finitely many times for each block. Since $z_k<h_k$, at least one auxiliary step is performed in every cycle. Hence all current missing proportions are eventually smaller than $\varepsilon$, proving that $M_k\to0$.

Consequently,
\[
z_k\longrightarrow\alpha,
\qquad
x_k\longrightarrow\beta.
\]
Let $A$ be the set in \eqref{eq:R-final-set} associated with the
resulting countable construction. By the stability assertion of
Lemma~\ref{prop:R-factor-placement}, later stages do not change any
density already obtained. Every density occurring in cycle $k$ lies
between $z_k$ and $\max\{x_{k-1},x_k\}$.
As in the proof of Proposition~\ref{prop:cyclic-cases}, these bounds give
\[
\liminf_{i\to\infty}d_i^{G_\bullet}(\PP(A))=\alpha,
\qquad
\limsup_{i\to\infty}d_i^{G_\bullet}(\PP(A))=\beta.
\]
It follows that $\mathcal L_{G_\bullet}(A)\subseteq[\alpha,\beta]$.

If $\alpha=\beta$, the preceding equalities show that the density
sequence converges to $\alpha$. Hence $\mathcal L_{G_\bullet}(A)=\{\alpha\}$, and the proof is complete.
We may therefore assume that $\alpha<\beta$. Fix
$\gamma\in(\alpha,\beta)$. For all sufficiently large $k$, the
controlled descent in cycle $k$ begins above $\gamma$ and ends below
$\gamma$. The first density in this descent that is at most $\gamma$
belongs to $(\gamma-\delta_k,\gamma]$.
It therefore tends to $\gamma$. Thus every $\gamma\in(\alpha,\beta)$ is a subsequential limit, and consequently
\[
\mathcal L_{G_\bullet}(A)=[\alpha,\beta].
\]
\end{proof}

\section{Proof of Theorems \ref{thm:main}, \ref{thm:finite-2G-obstruction}, and Corollary \ref{cor:sharp-criterion} }\label{sec:proofs}

\begin{proof}[Proof of Theorem~\ref{thm:main}]
By Proposition~\ref{prop:quotient-alternative}, there is a surjective homomorphism $\pi:G\twoheadrightarrow Q$, where $Q$ is one of the four listed groups. Give $Q$ the image filtration $Q_i:=\pi(G_i)$ $(i\geq1)$. Every $Q_i$ is finite, the sequence $(Q_i)_{i\geq1}$ is increasing, and $\bigcup_{i\geq1}Q_i=Q$. Thus $Q_\bullet$ is a filtration of $Q$ by finite subgroups. Repeated terms, if any, may be deleted when applying the results of the preceding sections. This does not change the set of subsequential limits.

If $Q$ is of type \textup{(\romannumeral1)} or
\textup{(\romannumeral2)}, Proposition~\ref{prop:coordinate-cases}
gives a set $C\subseteq Q$ such that $\mathcal L_{Q_\bullet}(C)=[\alpha,\beta]$. If $Q$ is of type \textup{(\romannumeral3)} or \textup{(\romannumeral4)}, the same conclusion follows from Proposition~\ref{prop:cyclic-cases}.

Apply Lemma~\ref{lem:quotient-transfer} to $\pi$ and $C$. It gives a set $A\subseteq G$ such that, for every $i\geq1$,
\[
\frac{|\PP(A)\cap G_i|}{|G_i|}
=
\frac{|\PP(C)\cap Q_i|}{|Q_i|}.
\]
The two density sequences therefore agree term by term, and hence
\[
\mathcal L_{G_\bullet}(A)
=\mathcal L_{Q_\bullet}(C)
=[\alpha,\beta].
\]
\end{proof}

\begin{proof}[Proof of Theorem~\ref{thm:finite-2G-obstruction}]
Denote $K:=2G$, which is a subgroup of $G$. Since $K$ is finite and $\bigcup_iG_i=G$, there is some positive integer $N$ such that $K\subseteq G_i$ for all $i\geq N$. Let $\pi:\, G\longrightarrow G/K:=\overline G$ be the quotient map. The group $\overline G$ is annihilated by 2 and can therefore be viewed as an $\F_2$-vector space.

For a given $A\subseteq G$, define $L_A$ to be the vector subspace generated by $\pi(A)$. We claim that
\begin{equation}\label{eq:boolean-quotient-image}
\pi\bigl(\PP(A)\bigr)=\PP\bigl(\pi(A)\bigr)=L_A.
\end{equation}
Indeed, let $x=a_1+\cdots+a_s\in\PP(A)$, where \(a_1,\ldots,a_s\) are distinct elements of \(A\). In the sum $\pi(x)=\pi(a_1)+\cdots+\pi(a_s)$, repeated images cancel in pairs because \(\overline G\) is annihilated by \(2\). After these cancellations, \(\pi(x)\) is a sum of distinct elements of \(\pi(A)\). Hence $\pi\bigl(\PP(A)\bigr)\subseteq\PP\bigl(\pi(A)\bigr)$. Conversely, let \(b_1,\ldots,b_t\) be distinct elements of \(\pi(A)\). Choose \(a_j\in A\) such that \(\pi(a_j)=b_j\). Since the \(b_j\) are distinct, the \(a_j\) are distinct, and therefore $a_1+\cdots+a_t\in\PP(A)$. Thus
\[
b_1+\cdots+b_t
=\pi(a_1+\cdots+a_t)
\in\pi\bigl(\PP(A)\bigr).
\]
Finally, finite subset sums in an \(\F_2\)-vector space are precisely finite \(\F_2\)-linear combinations. This proves \eqref{eq:boolean-quotient-image}.

For $i\geq N$, put $\overline G_i:=\pi(G_i)=G_i/K$ and define
\[
c_i:=\dim_{\F_2} \bigl(\overline G_i/(L_A\cap\overline G_i)\bigr), \qquad \delta_i:=\frac{|L_A\cap\overline G_i|}{|\overline G_i|} =2^{-c_i}.
\]
The inclusion
\(\overline G_i\subseteq\overline G\) induces the canonical linear embedding
\[
j_i:
\overline G_i/(L_A\cap\overline G_i)
\hookrightarrow
\overline G/L_A,
\qquad
x+(L_A\cap\overline G_i)\longmapsto x+L_A.
\]
Indeed, its kernel is $\{0\}$, and its image is $(\overline G_i+L_A)/L_A$. Since the subspaces \(\overline G_i\) are increasing and exhaust \(\overline G\), the images of the \(j_i\) are increasing and exhaust \(\overline G/L_A\). Consequently, \((c_i)_{i\geq N}\) is nondecreasing.

Set
\[
m:=\dim_{\F_2}(\overline G/L_A)
\in\{0,1,2,\ldots\}\cup\{\infty\}.
\]
Suppose first that \(m<\infty\). Choose $\bar x_1+L_A,\ldots,\bar x_m+L_A$ that form a basis of \(\overline G/L_A\). Since the \(\overline G_i\) exhaust \(\overline G\), their representatives $\bar x_1, \ldots, \bar x_m$ all belong to some common \(\overline G_I\). Hence the image of \(j_I\) contains a basis of \(\overline G/L_A\), so \(j_I\) is surjective and therefore an isomorphism. The same holds for every \(i\geq I\). Thus $c_i=m$ for all $i\geq I$, and consequently \(\delta_i=2^{-m}\) eventually.

Suppose now that \(m=\infty\). For every positive integer \(r\), choose \(r\) linearly independent elements of \(\overline G/L_A\). Their representatives all belong to some common \(\overline G_{I_r}\), so the image of \(j_{I_r}\) contains \(r\) linearly independent elements. Therefore $c_i\geq r$ for all $i\geq I_r$. Since this holds for every \(r\), we have  $c_i\longrightarrow\infty$ and $\delta_i\longrightarrow0$ as $i\rightarrow \infty$.

Next, we claim that
\begin{equation}\label{eq:boolean-fibre-image}
\pi\bigl(\PP(A)\cap G_i\bigr)=L_A\cap\overline G_i  \qquad(i\geq N).
\end{equation}
One inclusion follows from \eqref{eq:boolean-quotient-image}.  Conversely, let $\bar x\in L_A\cap\overline G_i$. Choose $x\in\PP(A)$ with $\pi(x)=\bar x$, and choose $y\in G_i$ with $\pi(y)=\bar x$. Then $x-y\in K\subseteq G_i$, so $x\in G_i$. This proves \eqref{eq:boolean-fibre-image}.

The restriction $\pi|_{G_i}:G_i\to\overline G_i$ has kernel $K$, so each of its fibres has $|K|$ elements. By \eqref{eq:boolean-fibre-image}, every point of $L_A\cap\overline G_i$ has at least one and at most $|K|$ preimages in $\PP(A)\cap G_i$. Therefore
\[
|L_A\cap\overline G_i| \leq |\PP(A)\cap G_i| \leq |K|\,|L_A\cap\overline G_i|.
\]
Since $|G_i|=|K|\,|\overline G_i|$, it follows that
\begin{equation}\label{eq:boolean-density-bound}
\frac{\delta_i}{|K|} \leq d_i^{G_\bullet}(\PP(A)) \leq\delta_i,\qquad (i\geq N).
\end{equation}

If $m=\infty$, then
\eqref{eq:boolean-density-bound} gives $d_i^{G_\bullet}(\PP(A))\to0$ as $i\rightarrow \infty$, which proves the
case $m=\infty$. If $m<\infty$, then $\delta_i=2^{-m}$ eventually, and the same inequality gives
\[
 \frac{2^{-m}}{|K|}
 \leq\underline d_{G_\bullet}\bigl(\PP(A)\bigr)
 \leq\overline d_{G_\bullet}\bigl(\PP(A)\bigr)
 \leq2^{-m}.
\]

Finally, if $\overline d_{G_\bullet}(\PP(A))>1/2$, the upper bound forces $m=0$.
The lower bound then gives
\[
 \underline d_{G_\bullet}\bigl(\PP(A)\bigr)
 \geq\frac1{|K|}=\frac1{|2G|}.
\]
\end{proof}

\begin{proof} [Proof of Corollary \ref{cor:sharp-criterion}]
Theorem~\ref{thm:main} gives
(\romannumeral1)$\Rightarrow$(\romannumeral3), and (\romannumeral3)$\Rightarrow$(\romannumeral2) is immediate.  If $2G$ is
finite, Theorem~\ref{thm:finite-2G-obstruction} shows that $(0,3/4)$
cannot occur.  Hence (\romannumeral2) implies (\romannumeral1).
\end{proof}

\section{Proof of Theorem~\ref{thm:realization-BB}}\label{sec:positive-density-BB}

By Theorem~\ref{thm:main}, there exists $A\subseteq G$ such that $\mathcal L_{G_\bullet}(A)=[\alpha,\beta]$. Then Theorem~\ref{thm:realization-BB} follows from the following theorem, since the corresponding set will satisfy $ \overline d_{G_\bullet}\bigl(\PP(A)\bigr)=\beta>0$.

\begin{theorem}
\label{thm:positive-density-BB}
Let $G$ be a countably infinite locally finite abelian group, and let $G_\bullet$ be any filtration by finite subgroups.  If $A\subseteq G$ satisfies $\overline d_{G_\bullet}\bigl(\PP(A)\bigr)>0$, then there exists an infinite set $B\subseteq\PP(A)$ such that
\[
 B+B\subseteq\PP(A).
\]
\end{theorem}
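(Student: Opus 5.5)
I will follow the outline sketched in the introduction, which splits according to the \emph{robust part} of $S:=\PP(A)$. Put
\[
 R:=\bigcap_{D\subseteq A,\ |D|<\infty}\PP(A\setminus D).
\]
First I check that $R$ is a subgroup. If $x,y\in R$ and $D$ is finite, I represent $x$ using $A\setminus D$ with a finite support $F$. I then represent $y$ using $A\setminus(D\cup F)$. This gives $x+y\in\PP(A\setminus D)$. Inverses follow since every element has finite order $n$, so $-x=(n-1)x$ is a sum of disjointly supported copies. The same disjoint-support argument gives $S+R\subseteq S$, and clearly $R\subseteq S$.

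\textbf{Case 1: $R$ is infinite.} Then $B:=R$ works, since $B+B=R\subseteq S$.

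\textbf{Case 2: $R$ is finite.} I first reduce to $\langle A\rangle=G$. The set $S$ lies in $H=\langle A\rangle$, and positive upper density forces $[G_i:H\cap G_i]$ to be bounded. Hence $H$ is infinite with the filtration $H\cap G_i$, and the density stays positive there. Next I pass to $\pi:G\to G/R$. Since $R$ is finite, the images $\pi(A)$ still carry positive upper density of $\pi(S)$ along $\pi(G_i)$.

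I then call a pair of finite disjoint supports $F_1,F_2\subseteq A$ a \emph{certificate} for $x$ if $\sum F_1=x$ and $\sum F_2=2x$ in $G$. I consider the multihypergraph on $A$ whose edges are the sets $F_1\cup F_2$ of certificates with $x\neq0$ (working modulo $R$ to decide nontriviality).

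Suppose there is an infinite matching, that is, infinitely many pairwise disjoint certificates for $x_1,x_2,\dots$. Passing to a subsequence, I make the $x_n$ distinct, using that each element has finitely many such certificates up to choice, or else pigeonhole on infinitely many distinct $x_n$. Put $B=\{x_n\}$. Then $x_n+x_m$ for $n\neq m$ is a sum over $F_1^{(n)}\cup F_1^{(m)}$, while $2x_n$ uses $F_2^{(n)}$. So $B+B\subseteq S$ and $B\subseteq S$.

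Suppose instead there is no infinite matching. A maximal matching is then finite, and its vertex set $U$ is a finite transversal: every certificate meets $U$. Let $T:=\pi(\PP(A\setminus U))$. By transversality, $x,2x\in T$ with disjoint supports forces $\pi(x)=0$. I need to upgrade to arbitrary supports; the plan is to use that $R$ was quotiented out and pass to a further finite deletion, so that $T$ satisfies $x,2x\in T\Rightarrow x=0$.

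\textbf{Key lemma (density decay).} I will show that a subset-sum set $T$ in a locally finite group with no nonzero $x$ having $x,2x\in T$ has upper density $0$ along every filtration. The plan is to take the indexed family of generators, decompose it into disjoint minimal zero-sum circuits plus residual independent indices, and count. For an element $t\in T\cap G_i$, the translates $t+T$ and $2t+\dots$ must be nearly disjoint. This produces many pairwise disjoint translates of $T\cap G_i$ inside $G_i$, giving density at most $1/k$ for every $k$. I expect this lemma, and the precise circuit decomposition making the disjointness count rigorous, to be the main obstacle.

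Granting the lemma, $\pi(S)=\pi(\PP(U))+T$ is a union of at most $2^{|U|}$ translates of a density-zero set. This contradicts positive upper density, so the infinite-matching alternative must hold.
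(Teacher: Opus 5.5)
Your architecture is the paper's: reduce to $\langle A\rangle=G$, split on the robust subgroup $R$, pass to $G/R$, and then find either infinitely many support-disjoint certificates or a finite transversal $U$ followed by a density-decay lemma. However, two steps do not go through as written.

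\textbf{The certificate definition.} Your certificates require $F_1\cap F_2=\varnothing$. The density-decay lemma needs, for $T=\pi(\PP(A\setminus U))$, that $\bar x,2\bar x\in T$ forces $\bar x=0$ for \emph{arbitrary} supports. A transversal for internally disjoint certificates says nothing about pairs $x=\Sigma(F)$, $2x=\Sigma(J)$ with $F\cap J\neq\varnothing$, and no finite deletion repairs this in general. For example, take $a_n,b_n$ ($n\geq1$) linearly independent in $\bigoplus\F_p$ with $p$ odd, and $c_n=a_n+2b_n$. Each triple gives the overlapping pair $F=\{a_n,b_n\}$, $J=\{a_n,c_n\}$, but a coordinatewise check shows there are no internally disjoint certificates at all. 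So your $U$ is empty, while $T$ still contains every pair $a_n+b_n$, $2(a_n+b_n)$.

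The remedy is to drop the internal disjointness. The matching case only uses disjointness \emph{between} certificates: $b_n+b_m$ is represented by $F_n\cup F_m$, and $2b_n$ by $J_n$ alone. Read the doubling relation modulo $R$ as well, $\pi(\Sigma(J))=2\pi(\Sigma(F))$. In the matching case the error lies in $R$ and is absorbed by $S+R=S$; in the transversal case this is exactly the hypothesis needed in $G/R$.

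Your distinctness step should also be stated correctly. If one value $b$ recurred infinitely often among the disjoint certificates, $b$ would have infinitely many pairwise disjoint representations, hence $b\in R$, contradicting $\pi(b)\neq0$.

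\textbf{The density-decay lemma.} This is the more serious gap: the lemma is the heart of the proof, and you only gesture at it. The mechanism ``$t+T$ and $2t+\cdots$ are nearly disjoint'' is not what makes it work. The missing ideas are these.
(a) Distinct circuits are disjoint. If $Z\neq Z'$ meet, then $Z\setminus Z'$ and $Z'\setminus Z$ are disjoint nonempty supports of the same element $x$, which is nonzero by minimality, and their union represents $2x$.
(b) Rigidity. Suppose $\sum_\xi\varepsilon_\xi c_\xi=0$ with $\varepsilon_\xi\in\{-1,0,1,2\}$. Writing $\varepsilon=2\mathbf 1_F-\mathbf 1_J$ gives $2\Sigma(F)=\Sigma(J)$, hence $\Sigma(F)=\Sigma(J)=0$. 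So $\varepsilon$ vanishes at residual indices and is constant on circuits, and circuit-reduced supports are unique.
(c) Two different counts, one for each way the components can be infinite.

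With $m$ residual indices and tail sum set $T$, $S$ is the disjoint union of the $2^m$ translates $a_w+T$, $w\in\{0,1\}^m$. By (b), the larger family indexed by words $w\in\{0,1,2\}^m$ with no $0$ preceding a $2$ is pairwise disjoint, since after swapping, differences of two such words have entries in $\{-1,0,1,2\}$. This family has $(m+2)2^{m-1}$ members, so $|S\cap H_i|/|H_i|\leq 2/(m+2)$ for large $i$.

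If only finitely many residual indices exist, there are infinitely many circuits, and a different family is needed. Fix $m$ circuits $Z_k$ and $\zeta_k\in Z_k$, with $z_k=c_{\zeta_k}$. Let $Q_k$ be the sums over proper subsets of $Z_k$, and $L_k$ the sums over proper subsets containing $\zeta_k$. Replacing the factor $Q_k$ of $S$ by $N_k=z_k+L_k$, which is disjoint from $Q_k$ by (b), yields pairwise disjoint sets $Y_0=S,Y_1,\dots,Y_m$. Each has at least $\frac13|S\cap H_i|$ elements in $H_i$, so $|S\cap H_i|/|H_i|\leq 3/(m+3)$.

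Without (b) you have no tool to certify any of these disjointness claims.
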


We now prove Theorem~\ref{thm:positive-density-BB}.  The main ingredient is a density-decay lemma for the set of finite subset sums of an indexed family, under the assumption that it contains no nonzero $x$ together with $2x$.
Indexed families are needed because distinct elements of $A$ may have the same image after passage to a quotient.

\subsection{A density-decay lemma}

Let $\mathbf c=(c_\xi)_{\xi\in I}$ be a countable indexed family in an abelian group.  Repetitions are initially permitted.  For finite $F\subseteq I$, write
\[
 \Sigma(F):=\sum_{\xi\in F}c_\xi,
 \qquad
 \operatorname{FS}(\mathbf c)
 :=\{\Sigma(F):F\subseteq I\text{ finite}\}.
\]
Thus $\operatorname{FS}(\mathbf c)$ includes the empty sum, in agreement with our convention for $\PP(A)$.

\begin{lemma}
\label{lem:no-doubles-decay}
Let $H$ be a countably infinite locally finite abelian group, let $\mathbf c=(c_\xi)_{\xi\in I}$ be a countable indexed family generating $H$, and put $S:=\operatorname{FS}(\mathbf c)$.  Assume that
\begin{equation}\label{eq:BB-ND}
 x\in S,\quad 2x\in S
 \quad\Longrightarrow\quad
 x=0.
\end{equation}
Then, for every filtration $H_\bullet=(H_i)$ of $H$ by finite subgroups,
\begin{equation}\label{eq:BB-Conclusion}
\lim_{i\to\infty}d_i^{H_\bullet}(S)=0.
\end{equation}
\end{lemma}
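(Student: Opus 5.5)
The plan is to prove \eqref{eq:BB-Conclusion} in the contrapositive-free form ``for every $k$, eventually $d_i^{H_\bullet}(S)\leq C/k$''. For each $k$ we will exhibit, inside each sufficiently large $H_i$, about $k$ pairwise disjoint sets, each of cardinality comparable to $|S\cap H_i|$. The only input from \eqref{eq:BB-ND} is the following \emph{disjoint-representation principle}. If $F,F'\subseteq I$ are disjoint finite index sets with $\Sigma(F)=\Sigma(F')=x$, then $\Sigma(F\cup F')=2x\in S$, and hence $x=0$. Thus every nonzero element of $S$ has all of its representations pairwise intersecting. More generally, if $\Sigma(F)=x$ and $E$ is disjoint from $F$ with $\Sigma(E)=x$, then $x=0$. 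This is the only place the hypothesis is used.

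First I decompose the index set, as announced in the outline. Choose greedily a maximal family $\{C_1,C_2,\ldots\}$ of pairwise disjoint finite \emph{minimal zero-sum circuits} in $I$: nonempty $C$ with $\Sigma(C)=0$ and no nonempty proper zero-sum subset. Let $R:=I\setminus\bigcup_n C_n$ be the residual indices; by maximality, $R$ contains no nonempty zero-sum subset. Since $H$ is infinite and generated by $\mathbf c$, at least one of the following holds: there are infinitely many circuits, or $R$ is infinite. I treat these two cases separately.

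\emph{Circuit case.} A circuit $C$ with $|C|\geq2$ yields, for each proper nonempty $P\subset C$, the pair $y=\Sigma(P)\neq0$ and $-y=\Sigma(C\setminus P)$. Fix $i$ and, for $s\in S\cap H_i$, a representation $F_s$. Take circuits $C_{n_1},\ldots,C_{n_k}$ lying in $H_i$ and disjoint from a finite set of indices carrying most of $S\cap H_i$. Shifting $s$ by the various partial sums $\Sigma(P_j)$ gives translates $S\cap H_i+y_j\subseteq S\cap H_i$. Suppose two of these translates met, or a translate met the original. Combining the two representations with complements of circuits would then produce disjoint representations of a nonzero element, contradicting the disjoint-representation principle. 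This yields $k$ disjoint copies and hence density at most $1/k$.

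\emph{Residual case.} Here $R$ is infinite and zero-sum-free. Take pairwise disjoint finite blocks $E_1,\ldots,E_k\subseteq R$ with $e_j:=\Sigma(E_j)\in H_i$. Zero-sum-freeness shows that the $e_j$ and all their block-sums are nonzero, and the same principle shows that the translates $(S\cap H_i)+e_j$ are pairwise almost disjoint.

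The main obstacle, common to both cases, is that an element $s\in S\cap H_i$ may admit only representations that meet the chosen blocks, so the shift $s\mapsto s+e_j$ is not available for it. My first attempt is a counting device. Each $s$ has some representation, and any finite index set meets only finitely many of the disjoint blocks. I therefore choose many more blocks than needed, say $k^2$, and pigeonhole, so that for each $s$ all but few blocks are disjoint from a chosen representation $F_s$. An averaging argument over $j$ then gives $\sum_j|\{s:F_s\cap E_j=\varnothing\}|\geq(k^2-O(k))|S\cap H_i|$. The delicate point is that the block sizes must not depend on $s$. If this fails, the fallback is to pass to a further quotient by a finite subgroup generated by finitely many $c_\xi$, so that the offending elements are absorbed.
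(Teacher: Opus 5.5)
Your argument has a genuine gap at its foundation. The disjoint-representation principle is strictly weaker than the hypothesis $x,2x\in S\Rightarrow x=0$, and no argument in which the hypothesis enters only through that principle can work. Take $H=\bigoplus_{n\geq1}\Z/2\Z$ with $\mathbf c$ the standard basis:
\begin{itemize}
\item every element has exactly one representation, so your principle holds;
\item there are no zero-sum circuits, so $R=I$ is infinite and zero-sum-free;
\item yet $S=H$ has density $1$, and every translate $(S\cap H_i)+e_j$ equals $H_i$.
\end{itemize}
The real hypothesis fails in this example, since $2x=0\in S$. So the (almost) disjointness you claim in the residual case cannot follow from your principle, and neither can the contradiction you claim in the circuit case.

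What is missing is the use of the hypothesis with \emph{overlapping} supports. A relation $\sum_\xi\varepsilon_\xi c_\xi=0$ with $\varepsilon_\xi\in\{-1,0,1,2\}$ can be written as $2\Sigma(F)=\Sigma(J)$ with $\varepsilon=2\mathbf 1_F-\mathbf 1_J$, where $F\cap J$ may be nonempty. The hypothesis then gives $\Sigma(F)=\Sigma(J)=0$. This forces such coefficients to vanish at residual indices and to be constant on each circuit. It also yields that all circuits are automatically disjoint and that every element of $S$ has a unique circuit-reduced support.

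The second problem is which sets you try to make disjoint. Translates of $S$ by block sums cannot be pairwise (almost) disjoint, even under the full hypothesis. For every $s$ with a representation avoiding $E_j\cup E_{j'}$, the element $s+e_j+e_{j'}$ lies in both $S+e_j$ and $S+e_{j'}$. When the blocks are single residual indices, this common part has at least $|S\cap H_i|/4$ elements for large $i$. In the circuit case you even assert $(S\cap H_i)+y_j\subseteq S\cap H_i$, which is incompatible with these translates being disjoint from $S\cap H_i$. So your translates do not furnish $k$ disjoint copies of $S\cap H_i$, and neither the pigeonhole device nor the quotient fallback repairs this.

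The paper sidesteps your ``main obstacle'' entirely. Uniqueness of circuit-reduced supports gives exact factorizations:
\begin{itemize}
\item for $m$ residual indices, $S=\bigsqcup_{w\in\{0,1\}^m}(a_w+T)$;
\item for $m$ circuits, $S=Q_1+\cdots+Q_m+T$, with unique representation.
\end{itemize}
Here $T$ is the subset-sum set of the remaining indices. Every element of $S$ thus has a well-defined tail in $T$, and no choice of representation is needed once the selected $c_\xi$ lie in $H_i$.

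One then counts disjoint sets outside $S$, produced by letting selected indices appear with coefficient $2$. In the residual case these are the translates $a_w+T$ for $w$ in the set $W_m\subseteq\{0,1,2\}^m$ of words with no $0$ preceding a $2$. This set is chosen so that the difference of two words, after possibly swapping them, has entries in $\{-1,0,1,2\}$, and it has $(m+2)2^{m-1}$ elements. In the circuit case they are the sets $Y_0=S,Y_1,\dots,Y_m$ built from $N_k=z_k+L_k$, each meeting $H_i$ in at least $|S\cap H_i|/3$ points. These give the density bounds $2/(m+2)$ and $3/(m+3)$, respectively.
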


\begin{proof}
Indices for which $c_\xi=0$ may be deleted without changing $S$.  After doing so, no two distinct indices have the same value: otherwise that value and its double would both be subset sums, contrary to \eqref{eq:BB-ND}.

A \emph{circuit} is defined to be an inclusion-minimal nonempty finite set $Z\subseteq I$ such that $\Sigma(Z)=0$.  Distinct circuits are disjoint.  Indeed, if distinct circuits $Z,Z'$ intersect, put $U:=Z\setminus Z'$ and $V:=Z'\setminus Z$. Minimality shows that $U$ and $V$ are nonempty, and $\Sigma(U)=\Sigma(V)=x$.  Since $U\cap V=\varnothing$, the union $U\cup V$ represents $2x$.  Condition~\eqref{eq:BB-ND} gives $x=0$, contradicting the minimality of $Z$ and $Z'$.

Moreover, every finite zero-sum set is a union of circuits, which follows by removing a minimal nonempty zero-sum subset and arguing inductively. Call an index \emph{residual} if it belongs to no circuit. We call each circuit, and each singleton $\{\xi\}$ with $\xi$ residual, a \emph{component}. Then these components partition $I$.

We use the following rigidity observation repeatedly.  Suppose that finitely many coefficients $\varepsilon_\xi\in\{-1,0,1,2\}$ satisfy
\begin{equation}\label{eq:BB-coefficient-relation}
 \sum_\xi\varepsilon_\xi c_\xi=0.
\end{equation}
Choose finite sets $F,J\subseteq I$ coordinatewise so that $\varepsilon_\xi=2\mathbf 1_F(\xi)-\mathbf 1_J(\xi)$.
Equation~\eqref{eq:BB-coefficient-relation} says
$2\Sigma(F)=\Sigma(J)$.  Both sides are subset sums, so
\eqref{eq:BB-ND} implies  $\Sigma(F)=\Sigma(J)=0$.
Consequently, $F$ and $J$ are unions of circuits.  Thus, in every relation of
the form \eqref{eq:BB-coefficient-relation},
\begin{equation}\label{eq:BB-rigidity}
 \varepsilon_\xi=0\text{ at each residual index, and the coefficients are
 constant on each circuit.}
\end{equation}

We call a finite set $F\subseteq I$ a \emph{support} of $s\in S$ if $\Sigma(F)=s$. Such a support is called \emph{circuit-reduced} if it contains no circuit. It is not hard to see that every $s\in S$ has a circuit-reduced support. Also, such circuit-reduced supports are unique. Indeed, suppose that $F_1$ and $F_2$
are circuit-reduced supports satisfying $\Sigma(F_1)=\Sigma(F_2)$. Then
$$
\sum_{\xi\in I}
\bigl(\mathbf 1_{F_1}(\xi)-\mathbf 1_{F_2}(\xi)\bigr)c_\xi=0.
$$
By \eqref{eq:BB-rigidity}, the coefficient is zero at every residual index and is constant on each circuit. A constant value $1$ on a circuit would force that circuit to be contained in $F_1$, while a constant value $-1$ would force it to be contained in $F_2$. Both are impossible because $F_1$ and $F_2$ are circuit-reduced. Hence every coefficient is zero, and therefore $F_1=F_2$.

Since the components partition $I$, at least one of the following two cases appears: there are either infinitely many residual indices or infinitely many circuits.

\textsc{Case 1.} Assume that infinitely many residual indices occur. For any given $m\geq 1$, we take $m$ distinct residual indices $\rho_1,\ldots,\rho_m$. For $w\in\{0,1,2\}^m$, put $a_w:=\sum_{j=1}^m w_jc_{\rho_j}$, and let $T:=\operatorname{FS}\bigl((c_\xi)_{\xi\notin \{\rho_1,\ldots,\rho_m\}}\bigr).
$ Every element of $T$ has a circuit-reduced support avoiding the selected indices. Since no circuit contains a residual index, adjoining any subset of $\{\rho_1,\ldots,\rho_m\}$ preserves circuit reduction. The uniqueness of circuit-reduced supports therefore gives the disjoint union
\begin{equation}\label{eq:BB-residual-decomposition}
S=\bigcup_{w\in\{0,1\}^m}(a_w+T).
\end{equation}

We enlarge the binary indexing set to
$$
W_m:=\{w\in\{0,1,2\}^m:
\text{there are no }p<q\text{ with }w_p=0 \text{ and } w_q=2\}.
$$
Thus $W_m$ indexes a larger family of translates of $T$, containing all the translates in \eqref{eq:BB-residual-decomposition}. If exactly $k$ coordinates of $w$ differ from $1$, their values, read from left to right, consist of a string of $2$'s followed by a string of $0$'s. Hence
\begin{equation}\label{eq:BB-W-count}
|W_m|
=\sum_{k=0}^m(k+1)\binom{m}{k}
=(m+2)2^{m-1}.
\end{equation}

We claim that the family $\{a_w+T:\, w\in W_m\}$ is pairwise disjoint. Indeed, if $w,w'\in W_m$, then $w-w'$ cannot contain both $2$ and $-2$, since this would produce a forbidden $0$ before a $2$ in one of the two words. After interchanging $w$ and $w'$ if necessary, all coordinates of $w-w'$ therefore belong to $\{-1,0,1,2\}$. If $a_w+T$ and $a_{w'}+T$ intersected, choosing tail supports $F$ and $F'$ would give
$$
\sum_{j=1}^m(w_j-w'_j)c_{\rho_j}
+\Sigma(F)-\Sigma(F')=0.
$$

This is a relation of the form \eqref{eq:BB-coefficient-relation}. By \eqref{eq:BB-rigidity}, its coefficient at every selected residual index must vanish. Thus $w=w'$, proving the disjointness.

Since the filtration exhausts $H$, there exists $i_0$ such that $c_{\rho_1},\ldots,c_{\rho_m}\in H_{i_0}$. For $i\geq i_0$, every $a_w$ belongs to $H_i$, and hence $(a_w+T)\cap H_i=a_w+(T\cap H_i)$. Now \eqref{eq:BB-residual-decomposition}, \eqref{eq:BB-W-count} and the disjointness give $|S\cap H_i|=2^m|T\cap H_i|$ and $(m+2)2^{m-1}|T\cap H_i|\leq |H_i|$. Consequently,
\begin{equation}\label{eq:BB-residual-bound}
\frac{|S\cap H_i|}{|H_i|}
\leq\frac{2}{m+2}
\end{equation}
for every $i\geq i_0$. Since $m$ is arbitrary, the formula \eqref{eq:BB-Conclusion} holds.

\textsc{Case 2.} Suppose that there are infinitely many circuits. For any given $m\geq 1$, we take $m$ distinct circuits $Z_1,\ldots,Z_m$. For each $1\leq j\leq m$, choose $\zeta_j\in Z_j$, put $z_j:=c_{\zeta_j}$, and define
$$
Q_j:=\{\Sigma(F):F\subsetneq Z_j\},
\qquad
L_j:=\{\Sigma(F):\zeta_j\in F\subsetneq Z_j\},
\qquad
N_j:=z_j+L_j.
$$

Since the zero entries have been deleted, $|Z_j|\geq2$. The proper
subsets of $Z_j$ are precisely its circuit-reduced subsets, so the
uniqueness of circuit-reduced supports gives
\[
|Q_j|=2^{|Z_j|}-1,\qquad
|L_j|=|N_j|=2^{|Z_j|-1}-1
\geq\frac13|Q_j|.
\]

Moreover, one has $N_j\cap Q_j=\varnothing$. Indeed, an equality
$$
z_j+\Sigma(F)=\Sigma(J),
\qquad
\zeta_j\in F\subsetneq Z_j,\quad J\subsetneq Z_j,
$$

would give a relation whose coefficient vector on $Z_j$ is $\mathbf 1_{\{\zeta_j\}}+\mathbf 1_F-\mathbf 1_J$.
By \eqref{eq:BB-rigidity}, this vector is constant on $Z_j$. Its value
at $\zeta_j$ is either $1$ or $2$. The value $2$ is impossible at any
other index of $Z_j$, while the value $1$ would force
$Z_j\setminus\{\zeta_j\}\subseteq F$ and hence $F=Z_j$. Both alternatives
are impossible.

Let $T:=\operatorname{FS}\bigl((c_\xi)_{\xi\notin
Z_1\cup\cdots\cup Z_m}\bigr)$. Splitting the unique circuit-reduced support among the selected circuits
and their complement gives
\begin{equation}\label{eq:BB-circuit-decomposition}
S=Q_1+\cdots+Q_m+T,
\end{equation}
where every element of $S$ has a unique representation by the displayed
factors.

Put $s_0:=0$ and $s_k:=z_1+\cdots+z_k$ for $1\leq k\leq m$. Also define $Y_0:=S$ and
\begin{align*}
Y_k&:=s_k+Q_1+\cdots+Q_{k-1}+L_k
+Q_{k+1}+\cdots+Q_m+T\\
&=s_{k-1}+Q_1+\cdots+Q_{k-1}+N_k
+Q_{k+1}+\cdots+Q_m+T.
\end{align*}
Since $L_k\subseteq Q_k$, subtracting the fixed shift $s_k$ from the first expression for $Y_k$ leaves a restriction of the unique factorization in \eqref{eq:BB-circuit-decomposition}. Hence the displayed parametrization of $Y_k$ is unique.

We claim that $Y_0,\ldots,Y_m$ are pairwise disjoint. Suppose that $Y_l\cap Y_k\neq\varnothing$ for some $0\leq l<k$. Since $s_k-s_l=z_{l+1}+\cdots+z_k$, subtracting the corresponding factor representations gives a relation with coefficients in $\{-1,0,1,2\}$. On $Z_k$, its contribution has the
form
$$
z_k+u_k-v_k,
\qquad
u_k\in L_k,\quad v_k\in Q_k.
$$
By \eqref{eq:BB-rigidity}, the coefficients are constant on $Z_k$.
Since $\Sigma(Z_k)=0$, it follows that $z_k+u_k=v_k$. This is impossible
because $z_k+u_k\in N_k$, whereas $N_k\cap Q_k=\varnothing$.

Choose $i_0$ such that $c_\xi\in H_{i_0}$ for every $\xi\in Z_1\cup\cdots\cup Z_m$. For $i\geq i_0$, all contributions from the selected circuits and all
shifts $s_k$ belong to $H_i$. Membership in $H_i$ therefore depends only
on the tail contribution from $T$. The uniqueness of the preceding
factorizations gives
$$
|S\cap H_i|
=\Big(\prod_{j=1}^m|Q_j|\Big)|T\cap H_i|
$$
and
$$
|Y_k\cap H_i|
=|L_k|
 \Bigg(\prod_{\substack{1\leq j\leq m\\j\neq k}}|Q_j|\Bigg)
 |T\cap H_i|
\geq\frac13|S\cap H_i|.
$$
Since $Y_0,\ldots,Y_m$ are pairwise disjoint,

$$
|H_i|
\geq\sum_{k=0}^m|Y_k\cap H_i|
\geq\left(1+\frac m3\right)|S\cap H_i|.
$$
Consequently,
\begin{equation}\label{eq:BB-circuit-bound}
\frac{|S\cap H_i|}{|H_i|}
\leq\frac{3}{m+3}
\end{equation}
for every $i\geq i_0$. Since $m$ is arbitrary, one proves \eqref{eq:BB-Conclusion}.
\end{proof}

\subsection{Representations outside finite sets and quotient reduction}

\begin{proof}[Proof of Theorem~\ref{thm:positive-density-BB}]
Put $S:=\PP(A)$ and $H:=\langle A\rangle$, and let $\eta:G\to G/H$ be the quotient map. For each $i$, set $\Gamma_i:=\eta(G_i)$ and $q_i:=|\Gamma_i|=[G_i:G_i\cap H]$. The finite groups $\Gamma_i$ are increasing, so $(q_i)$ is nondecreasing. Since $S\subseteq H$,
$$
d_i^{G_\bullet}(S)
\leq\frac{|G_i\cap H|}{|G_i|}
=\frac1{q_i}.
$$

If $(q_i)$ were unbounded, then $q_i\to\infty$ and $d_i^{G_\bullet}(S)\to0$, contrary to the hypothesis. Thus $(q_i)$ is bounded and hence eventually constant, say $q$. Together with the fact $\Gamma_i\subseteq\Gamma_{i+1}$, one deduces that $\Gamma_i=\Gamma_{i+1}$ for all sufficiently large $i$. In view of $\bigcup_{i\geq1}\Gamma_i=G/H$, we then get $[G:H]=q<\infty$. In particular, $H$ is infinite. The groups $H_i:=H\cap G_i$ $(i\geq 1)$ form a filtration $H_\bullet$ of $H$. Since $S\subseteq H$, one has $|S\cap H_i|/|H_i| =q_i |S\cap G_i|/|G_i|$. Thus $\overline d_{H_\bullet}(\PP(A))= q  \, \overline d_{G_\bullet}(\PP(A))>0$. Replacing $G$ by $H$ and $G_i$ by $H_i$, we may assume without loss of generality that $\langle A\rangle=G$.

Let $R$ be the set of all $r\in G$ such that, for every finite $D\subseteq A$, there is a finite $F\subseteq A\setminus D$ satisfying $\sum_{a\in F}a=r$. For $r\neq0$, this is equivalent to having infinitely many pairwise disjoint supports representing $r$.

The set $R$ is a subgroup of $G$.  Indeed, representations of $r,r'\in R$ can be chosen successively with disjoint supports, and their union represents $r+r'$, which implies $r+r'\in R$. Noting that $G$ is locally finite, each $r\in R$ has a finite order, say $n$. Then the union of $n-1$ such representations of $r$ represents $(n-1)r=-r$, which gives $-r\in R$.

Now we claim that
\begin{equation}\label{eq:BB-R-stabilizer}
 R\subseteq S,
 \qquad
 S+R=S.
\end{equation}
The first assertion follows by taking $D=\varnothing$.  For the second, represent each $r\in R$ outside a support of each fixed $s\in S$ to obtain $s+r\in S$. Hence $S+R\subseteq S$. The reverse inclusion also holds because $0\in R$.

If $R$ is infinite, then $B:=R$ satisfies $B\subseteq S$ and $B+B=R\subseteq S$.  In the following, we suppose that $R$ is finite.

Let $\pi:G\longrightarrow\overline G:=G/R$, and $\overline S:=\pi(S)$. Since $R$ is finite, $R\subseteq G_i$ for all sufficiently large $i$. After discarding the preceding levels, put $\overline G_i:=\pi(G_i)=G_i/R$.  Then $(\overline G_i)$ is a filtration of $\overline G$. By \eqref{eq:BB-R-stabilizer}, one has $s+R\subseteq S$ for any $s\in S$. So $S$ is a union of $R$-cosets. Now both $S\cap G_i$ and $G_i$ are unions of $R$-cosets. We conclude that
\begin{equation}\label{eq:BB-quotient-density}
 \frac{|\overline S\cap\overline G_i|}{|\overline G_i|}
 =\frac{|S\cap G_i|}{|G_i|}.
\end{equation}
Hence $\overline S$ has positive upper density.

A \emph{quotient certificate} is defined to be a pair $(F,J)$ of finite subsets of $A$ such that, with $b:=\sum_{a\in F}a$,
\begin{equation}\label{eq:BB-quotient-certificate}
 \pi(b)\neq0,
 \qquad
 \pi\Big(\sum_{a\in J}a\Big)=2\pi(b).
\end{equation}
Its \emph{support} is defined to be $F\cup J$.  Since $A$ is countable, enumerate all quotient certificates and greedily retain one whenever its support is disjoint from those retained
earlier.  The resulting family is maximal among families of certificates with pairwise disjoint supports.

Suppose first that there are infinitely many certificates $(F_n,J_n)$ with pairwise disjoint supports, and put $b_n:=\sum_{a\in F_n}a$. We claim that the elements $\pi(b_n)$ take infinitely many distinct values. Otherwise, since $\pi(b_n)\neq0$ for every $n$, some nonzero element of $G/R$ would equal $\pi(b_n)$ for infinitely many $n$. The corresponding $b_n$ would all lie in a single $R$-coset. Since $R$ is finite, some $b\in G$ would occur as $b_n$ infinitely often. The corresponding sets
$F_n$ are pairwise disjoint representations of $b$, so $b\in R$, contrary to $\pi(b)\neq0$.

After passing to a subsequence, we may assume that the elements $\pi(b_n)$ are pairwise distinct. Set $B:=\{b_n:n\geq1\}$. Then $B$ is an infinite subset of $S$. If $n\neq m$, the disjointness of the certificate supports implies that $F_n\cap F_m=\varnothing$. Hence $F_n\cup F_m$ is a representation of $b_n+b_m$, and therefore
$b_n+b_m\in S$. Moreover, by \eqref{eq:BB-quotient-certificate},
\[
2b_n-\sum_{a\in J_n}a\in R.
\]
In view of \eqref{eq:BB-R-stabilizer} and the fact $\sum_{a\in J_n}a\in S$, one has $2b_n\in S$. Consequently, $B+B\subseteq S$.

It remains to consider the case in which the maximal family of quotient certificates with pairwise disjoint supports is finite. Let $U\subseteq A$ be the union of its supports. By maximality, $U$ meets the support of every quotient certificate. Set $C:=A\setminus U$ and $T:=\pi\bigl(\PP(C)\bigr)$. Then
\begin{equation}\label{eq:BB-quotient-ND}
 x\in T,\quad 2x\in T
 \quad\Longrightarrow\quad
 x=0.
\end{equation}
Indeed, if $x\neq0$, supports $F,J\subseteq C$ representing $x$ and $2x$ would form a quotient certificate whose support is disjoint from $U$.

Let $\overline H:=\langle\pi(C)\rangle$.  Since $A=C\cup U$ is a disjoint union and generates $G$,
\[
 \overline G=\overline H+\langle\pi(U)\rangle.
\]
The subgroup $\langle\pi(U)\rangle$ is finite because $U$ is finite and $\overline G$ is locally finite.  Hence $\overline H$ has finite index in $\overline G$.  Since $\overline G$ is infinite, so is $\overline H$.

Apply Lemma~\ref{lem:no-doubles-decay} to the indexed family $(\pi(c))_{c\in C}$ in $\overline H$, with the filtration $\overline H_i:=\overline H\cap\overline G_i$. By \eqref{eq:BB-quotient-ND}, one obtains that $|T\cap\overline H_i|/|\overline H_i|\longrightarrow0$. Since $T\subseteq\overline H$,
\[
 0\leq
 \frac{|T\cap\overline G_i|}{|\overline G_i|}
 \leq
 \frac{|T\cap\overline H_i|}{|\overline H_i|}
 \longrightarrow0.
\]

Finally, put $P:=\pi(\PP(U))$.  Since $A=C\cup U$, one has $\overline S=P+T$. The set $P$ is finite and is therefore contained in $\overline G_i$ for all sufficiently large $i$.  For such $i$ and every $p\in P$, one has $(p+T)\cap\overline G_i=p+(T\cap\overline G_i)$. Consequently, $|\overline S\cap\overline G_i| \leq |P|\,|T\cap\overline G_i|$, and therefore
\[
 \frac{|\overline S\cap\overline G_i|}{|\overline G_i|}
 \longrightarrow0.
\]
This contradicts \eqref{eq:BB-quotient-density} and the positive upper density of $S$. This completes the proof.
\end{proof}

\section{Acknowledgements}
Norbert Hegyv\'{a}ri was supported by the National Research, Development and Innovation Office NKFIH Grant No K-146387.

\end{document}